\numberwithin{equation}{section}
\newcommand{\R}{{\mathbb R}}
\newcommand{\be}{\begin{eqnarray}}
\newcommand{\ben}{\begin{eqnarray*}}
\newcommand{\en}{\end{eqnarray}}
\newcommand{\enn}{\end{eqnarray*}}
\newcommand{\pa}{\partial}
\newtheorem{theorem}{Theorem}[section]
\newtheorem{remark}[theorem]{Remark}
\definecolor{rot}{rgb}{1.000,0.000,0.000}
\begin{document}
\renewcommand{\theequation}{\arabic{section}.\arabic{equation}}
\begin{titlepage}
  \title{On the hyper-singular boundary integral equation methods for dynamic poroelasticity: three dimensional case}

\author{Lu Zhang\thanks{School of Mathematical Sciences, University of Electronic Science and Technology of China, Chengdu, Sichuan 611731, China. Email: {\tt luzhang@std.uestc.edu.cn}}\;,
Liwei Xu\thanks{School of Mathematical Sciences, University of Electronic Science and Technology of China, Chengdu, Sichuan 611731, China. Email: {\tt xul@uestc.edu.cn}}\;,
Tao Yin\thanks{LSEC, Institute of Computational Mathematics and Scientific/Engineering Computing, Academy of Mathematics and Systems Science, Chinese Academy of Sciences, Beijing 100190, China. Email:{\tt yintao@lsec.cc.ac.cn}}}
\end{titlepage}
\maketitle
%\vspace{.2in}

\begin{abstract}
In our previous work [SIAM J. Sci. Comput. 43(3) (2021) B784-B810], an accurate hyper-singular boundary integral equation method for dynamic poroelasticity in two dimensions has been developed. This work is devoted to studying the more complex and difficult three-dimensional problems with Neumann boundary condition and both the direct and indirect methods are adopted to construct combined boundary integral equations. The strongly-singular and hyper-singular integral operators are reformulated into compositions of weakly-singular integral operators and tangential-derivative operators, which allow us to prove the jump relations associated with the poroelastic layer potentials and boundary integral operators in a simple manner. Relying on both the investigated spectral properties of the strongly-singular operators, which indicate that the corresponding eigenvalues accumulate at three points whose values are only dependent on two Lam\'e constants, and the spectral properties of the Calder\'on relations of the poroelasticity, we propose low-GMRES-iteration regularized integral equations. Numerical examples are presented to demonstrate the accuracy and efficiency of the proposed methodology by means of a Chebyshev-based rectangular-polar solver.

{\bf Keywords:} Poroelasticity, hyper-singular operator, Calder\'{o}n relation, regularized integral equation
\end{abstract}

\section{Introduction}
\label{sec1}

The dynamic poroelastic problems describing the physical behavior of the wave propagation in the elastic solid and the interstitial fluid can be found in many fields of applications such as petroleum industry, materials science, soil mechanics and biomechanics, etc. In accordance to Biot's theory~\cite{B41,B55,B561,B562,B563,CBB91,B00}, the dynamic poroelastic problems can be modeled by the coupled equations of the pore pressure and the solid displacement field, and the targeted degrees of freedom can be changed~\cite{S09}. For the numerical solutions of such kind of wave scattering problems, it is known that the boundary integral equation (BIE) methods~\cite{CK98,HW08} take advantages over the volumetric discretization methods~\cite{DR93,DE96,LS98,XOX19} in the sense of dimensions reduction, discretization of boundary and natural satisfactory of radiation condition, while in particular, the volumetric methods requires introducing appropriate artificial boundary conditions, such as absorbing boundary conditions or perfectly matched layers for the treatment of problems on unbounded domains. As a continuation of our previous work~\cite{ZXY21} for the two-dimensional poroelastic scattering problems, this work is devoted to proposing efficient BIE methods for solving the three-dimensional problems~\cite{CD95,MB89,MS11,MS12,S011,S012,SSU09} with Neumann boundary condition and it requires more complex technical investigations of the poroelastic boundary integral operators (BIOs).

In the classical BIE theory, both the direct methods based on Green's formula and the indirect methods based on potential theory have been extensively discussed. In practice, the combined boundary integral equations (CBIEs) resulting from a combination of single-layer and double-layer BIOs  (for Dirichlet case) or a combination of double-layer and hyper-singular BIOs (for Neumann case) are generally employed to avoid the influence of possible eigenfrequencies. In this work, we employ both the direct method and the indirect method to construct two types of CBIEs for solving the three-dimensional poroelastic problems with Neumann boundary condition. As mentioned in~\cite{ZXY21},  it still remains open to prove the unique solvability of the CBIEs for the poroelastic scattering problem, but these BIEs still can provide efficient numerical tools for the solutions of the problems imposed on unbounded domains.
Then analogous to the two-dimensional case~\cite{ZXY21}, the following three issues should be addressed:
\begin{itemize}
\item[(i).] {\it The jump relations between the layer-potentials and the BIOs in dynamic poroelasticity case are not easy to be observed.}
\item[(ii).] {\it The double-layer operators $K, K'$ (strongly-singular) and hyper-singular operator $N$ are well defined in the sense of Cauchy principle value and Hadamard finite part~\cite{HW08}, respectively. Then it requires appropriate solvers for the accurate evaluation of these operators.}
\item[(iii).] {\it It is known that the eigenvalues of the hyper-singular operator accumulate at infinity and as a result, solving the CBIEs by means of Krylov-subspace iterative solvers, such as GMRES, generally requires a relatively large number of iterations for the convergence of numerical solution. Then low-GMRES-iteration integral formulations are highly desirable.}
\end{itemize}

To resolve the first and second issues, it is necessary to take a comprehensive study on the single kernels of the poroelastic BIOs. To reduce/transform the singularities, some methodologies, for instance, adding-and-subtracting appropriate terms and regularization using integration-by-parts, have been discussed in open literatures. Inspired by the idea of reformulating the acoustic/Laplace hyper-singular integral operator into a combination of weakly-singular integral operators and tangential derivatives~\cite{HW08,N01}, a novel regularization technique using G\"unter derivative and Stokes formulas has been developed for the elastic and thermoelastic problems~\cite{BXY17,BXY191,L14,YHX17}. In two-dimensions, the G\"unter derivative can be simplified as the classical tangential derivative multiplied by a constant matrix, and the regularized formulations for two-dimensional poroelastic BIOs have been investigated in \cite{ZXY21}. But the three-dimensional G\"unter derivative is more complex. Although the thermoelastic problem~\cite{KGBB79} takes a similar Biot's model as the poroelastic problem, the results presented in \cite{BXY191} can not be extended to the three-dimensional poroelastic case trivially regarding to the more complicated coupled boundary operator and its adjoint, see Section~\ref{sec:3}. It is proved in Theorems~\ref{regK}-\ref{regN4} that the three-dimensional strongly-singular and hyper-singular poroelastic integral operators can be re-expressed in terms of multiple weakly-singular integral operators and tangential-derivative operators. Compared with the formulations given in~\cite{MS11,MS12}, the derived regularized expressions in this work take simpler forms, and as a consequence, the jump relations between the layer-potentials and the BIOs in dynamic poroelasticity case can be proved in an extremely simple manner, see Theorem~\ref{jump} and its proof in Section~\ref{sec:4.3}. In addition, owing to the new regularized expressions, the numerical evaluation of the poroelastic BIOs amounts to the evaluation of weakly-singular type integrals, for which the so-called Chebyshev-based rectangular-polar method proposed in~\cite{BG20} is applicable, and the evaluation of three-dimensional tangential derivatives can be implemented via FFT~\cite{BY20}.

The third issue is related to the spectral regularization or preconditioning. In addition to algebraic preconditioning approaches, such as sparse approximate inverse~\cite{BT98,CDGS05} and multigrid methods~\cite{HXZ14}, the analytical preconditioning approach based on the Calder\'on relation, which in fact utilizes the compositions $NS, SN$ of single-layer operator $S$ and hyper-singular operator $N$, has been discussed for solving wave scattering problems by closed surfaces~\cite{BET12,BY20,CN02} or open surfaces~\cite{ABD05,BL121,BY20}. Due to the weakly-singularity of double-layer operator, it follows easily that the acoustic Calder\'on relation can be viewed as a compact perturbation of an identity operator for the smooth closed-surface case. But this does not hold trivially in elastic case, and also in poroelastic case (which can be understood naturally since the elastic single kernels are involved in the poroelastic BIOs), on account of the fact that the classical elastic double-layer operators are not compact. It has been proved in \cite{BXY192} for two-dimensional case and in \cite{BY20} for three-dimensional case that the elastic double-layer operators $K, K'$ are polynomially compact and as a result, the values of the finite accumulation points of the eigenvalues of $NS, SN$ only depend on the Lam\'e parameters of the elastic medium. The two-dimensional poroelastic case has been discussed in \cite{ZXY21}, whereas the result does not directly fit for the three-dimensional context. It is shown in this work (see Theorem~\ref{spectra}) that the three-dimensional poroelastic double-layer operators are compact in the sense of a third-order polynomial and interestingly, the corresponding accumulation points of the eigenvalues are independent of the poroelastic parameters (see Table~\ref{Tablemodel}) except the two Lam\'e parameters. On a basis of the spectral properties of the poroelastic BIOs and analogous to the two-dimensional approach~\cite{ZXY21}, we propose two regularized CBIEs for which the eigenvalues of the combined integral operators are bounded away from zero and infinity and then it leads to significant reductions in the number of GMRES iterations required for convergence to a given residual tolerance over the original CBIEs.

This paper is organized as follows. The dynamic poroelastic scattering problem is introduced in Section~\ref{sec:2} and then in Sections~\ref{sec:3.1}-\ref{sec:3.2}, we present both the direct and indirect methods to derive the classical CBIEs, respectively. Section~\ref{sec:3.3} is arranged to give a theoretical investigation of the spectral properties of the poroelastic integral operators and the corresponding Calder\'on relation. In Section~\ref{sec:4}, regularized expressions of the strongly-singular and hyper-singular operators are presented and then the jump relations between the layer-potentials and the BIOs are proved. Section~\ref{sec:5.1} proposes two new RBIEs based on the Calder\'on relation and Section~\ref{sec:5.2} briefly describes the numerical discretization method for poroelastic BIOs. Some numerical examples are presented in Section~\ref{sec:6} to demonstrate the accuracy and efficiency of the proposed method.

\section{Poroelastic problem}
\label{sec:2}

Let $\Omega$ be a bounded domain in $\R^3$ with smooth boundary $\Gamma : = \partial \Omega $, and its exterior complement is denoted by ${\Omega ^c} = \R^3\backslash \overline \Omega$. This work is devoted to studying the numerical solutions of the three-dimensional time-harmonic problems of wave propagation in the domain $\Omega^c$ which is occupied by a linear isotropic poroelastic medium characterized by the physical parameters listed in Table~\ref{Tablemodel}.

\begin{table}[htb]
	\caption{The material parameters in poroelasticity.}
	\centering
	\begin{tabular}{|c|c|}
		\hline
		Notation & Physical meaning \\
		\hline
		$\lambda,\mu (\mu>0,3\lambda+2\mu>0)$ & Lam\'e parameters \\
		$\nu_p$ & Poisson ratio \\
		$\nu_u$ & undrained Poisson ratio \\
		$B$ & Skempton porepressure coefficient \\
		$\rho_s$ & solid density \\
		$\rho_f$ & fluid density \\
		$\rho_a=C\phi\rho_f$ & apparent mass density \\
		$\phi$ & porosity \\
		$\kappa$ & permeability coefficient \\
		$\rho=(1-\phi)\rho_s + \phi\rho_f$ & bulk density \\
		$\alpha=\frac{3(\nu_u-\nu_p)}{B(1-2\nu_p)(1+\nu_u)}$ &  compressibility \\
		$R=\frac{2\phi^2\mu B^2(1-2\nu_p)(1+\nu_u)^2}{9(\nu_u-\nu_p)(1-2\nu_u)}$ & constitutive coefficient \\
		\hline
	\end{tabular}
	\label{Tablemodel}
\end{table}

Following the Biot's theory~\cite{B41,B561,B562}, the solid displacements $u=(u_1,u_2,u_3)^\top\in H_{loc}^1(\Omega^c)^3$ and the pore pressure $p\in H_{loc}^1(\Omega^c)$ characterizing the wave propagation in poroelastic medium can be modeled by the following coupled partial differential equations
\begin{equation}
\label{model}
\begin{split}
& \Delta^*u + (\rho-\beta\rho _f)\omega ^2u - (\alpha-\beta)\nabla p = 0\\
&\Delta p + qp + i\omega\gamma\nabla\cdot u = 0
\end{split}\quad \mbox{in}\quad  \Omega^c,
\end{equation}
or equivalently, in a matrix form
\ben
LU=0, \quad L=\begin{bmatrix}
	\Delta^*  + (\rho-\beta\rho _f)\omega ^2I   & -(\alpha -\beta )\nabla \\
	i\omega\gamma\nabla \cdot & \Delta  + q
\end{bmatrix}, \quad U=(u_1,u_2,u_3,p)^\top,
\enn
where
\ben
\beta=\frac{{\omega {\phi^2}{\rho_f}\kappa}}{{i{\phi ^2} + \omega \kappa ({\rho _a} + \phi {\rho _f})}},\quad q = \frac{\omega^2\phi^2\rho_f}{\beta R},\quad
\gamma =  - \frac{{i\omega {\rho _f}(\alpha  - \beta )}}{\beta },
\enn
are abbreviations defined to simplify the representation of the problem, $\omega$ denotes the frequency, $I$ is the identity operator and $\Delta^*$ denotes the Lam\'e operator given by
\ben
\Delta^*:= \nabla \cdot \widetilde \sigma (u),
\enn
with
\ben
\widetilde \sigma (u)=\lambda (\nabla \cdot u)I+2\mu \widetilde \varepsilon (u) \quad  \mbox{and} \quad \widetilde \varepsilon (u) = \frac{1}{2}(\nabla u + (\nabla u)^\top).
\enn
Given some data $F\in H^{-1/2}(\Gamma)$, the Neumann boundary condition
\be
\label{boundary condtion}
\widetilde{T}(\partial ,\nu )U: = \begin{bmatrix}
	T(\partial ,\nu ) & - \alpha\nu \\
	- \rho_f\omega^2\nu^\top & \partial_\nu
\end{bmatrix}U = F \quad\mbox{on}\quad\Gamma
\en
is imposed for the poroelastic problem in which the traction operator $T(\partial ,\nu )$ is defined as
\ben
T(\partial ,\nu )u: = 2\mu{\partial _\nu }u + \lambda \nu \nabla \cdot u + \mu \nu \times \nabla \times u,\quad \nu=(\nu_1,\nu_2,\nu_3)^\top,
\enn
where $\nu$ denotes the outward unit normal to the boundary $\Gamma$ and $\pa_\nu:=\nu\cdot\nabla$ is the normal derivative.

It follows ~\cite{BXY191,HS20,S012} that the solution $U$ of (\ref{model}) admits a representation of the form
\ben
U=(u,p)^\top=(u^1,p^1)^\top+(u^2,p^2)^\top+(u^s,p^s)^\top
\enn
where $(u^k,p^k)$, $k=1,2,s$, satisfy
\be
\label{dec}
\begin{array}{*{20}{r}}
	(\Delta+k_1^2)u^1=0,&(\Delta+k_2^2)u^2=0,&(\Delta+k_s^2)u^s=0,\\
	{\rm{curl}}\,u^1=0,&{\rm{curl}}\,u^2=0,&{\rm{div}}\,u^s=0,\\
	(\Delta+k_1^2)p^1=0,&(\Delta+k_2^2)p^2=0,&p^s=0.
\end{array}
\en
Here, we denote by $k_p$ and $k_s$ the compressional and shear wave numbers, respectively, and they are given by
\ben
k_p:=\omega\sqrt{\frac{\rho-\beta\rho_f}{\lambda  + 2\mu}}, \quad k_s:=\omega\sqrt{\frac{\rho-\beta\rho_f}{\mu}}.
\enn
The wave numbers $k_1$, $k_2$ in (\ref{dec}), which represent the wave numbers of the fast compressional wave and slow compressional wave in poroelastic medium, respectively, are determined through
\ben
k_{1}^2+k_{2}^{2}=q(1+ \epsilon )+k_p^{2}, \quad k_1^{2}k_2^{2}=qk_p^{2}, \quad \mbox{Im}(k_i)\ge 0, i=1,2,
\enn
with $\epsilon=\frac{i\omega\gamma(\alpha-\beta)}{q(\lambda+2\mu)}$. In particular,
\ben
k_1 &=& \sqrt {\frac{1}{2}\left\{ {k_p^2 + q(1 + \varepsilon ) + \sqrt {[ {k_p^2 + q(1 + \varepsilon )}] - 4q k_p^2} } \right\}}, \\
k_2 &=& \sqrt {\frac{1}{2}\left\{ {k_p^2 + q(1 + \varepsilon ) - \sqrt {[ {k_p^2 + q(1 + \varepsilon )} ] - 4qk_p^2} } \right\}}.
\enn
To complete the statement of the poroelastic problem, we assume that the solution $U$ satisfies the following Kupradze radiation conditions as $r = \left| x \right| \to \infty $ for $l=1,2,3$ and $j=1,2,$
\ben
u^j&=&o(r^{-1}),\quad \partial_{x_l}u^j=O(r^{-2}),\\
p^j&=&o(r^{-1}),\quad \partial_{x_l}p^j=O(r^{-2}),\\
u^s&=&o(r^{-1}),\quad r(\partial_ru^s-ik_su^s)=O(r^{-1}).
\enn

\begin{remark}
For the poroelastic problem, the degrees of freedom can be determined in different ways~\cite{S09}. Compared with the formulation in terms of the solid displacement and the fluid displacement, and the formulation in terms of the solid displacement and the seepage displacement, the above model enjoys the lowest number of unknowns. For the uniqueness analysis of the dynamic poroelastic problem, we refer to~\cite{DS63}.
\end{remark}

\section{Boundary integral equations}
\label{sec:3}
In this section, we introduce the hyper-singular BIEs for solving the poroelastic problem together with some theoretical study of the properties of BIOs. Based on the Green's identities and potential theory, direct and indirect boundary integral formulations are derived, respectively. We begin with the first and second Green's identities for the poroelastic problems in $\Omega$ (analogous to the problem in $\Omega^c$). For $U=(u^\top,p)^\top$ and $V=(v^\top,\theta)^\top$, the first Green's identity reads
\be
\label{FGI}
\int_{\Omega} LU\cdot V dx + A_\Omega(U,V)=\int_{{\Omega }} {\widetilde T(\partial ,\nu )} U\cdot Vds,
\en
while the second Green's identity admits
\be
\label{SGI}
\int_{\Omega} \left(LU\cdot V-U\cdot L^*V\right)\,dx= \int_\Gamma \left( \widetilde{T}(\pa,\nu)U\cdot V-U\cdot \widetilde{T}^*(\pa,\nu)V\right)\,ds.
\en
Here, $A_\Omega$ denotes a bilinear form defined by
\ben
A_\Omega(U,V)&:=&\int_\Omega ( \widetilde \sigma (u):\widetilde \varepsilon (v) + (\rho  - \beta {\rho _f}){\omega ^2}u \cdot v - \alpha p\nabla  \cdot v \\
	    &&+ \beta \nabla p \cdot v- {\rho _f}{\omega ^2}u \cdot \nabla \theta  + \frac{{{\rho _f}{\omega ^2}\alpha }}{\beta }\nabla  \cdot u\theta  + \nabla p \cdot \nabla \theta  + qp\theta  ) dx,
\enn
where $L^*$ denotes the adjoint operator of $L$ defined by
\ben
L^*=\begin{bmatrix}
	\Delta^*  + (\rho-\beta\rho _f)\omega ^2I   & -i\omega\gamma\nabla  \\
	(\alpha -\beta )\nabla\cdot& \Delta  + q
\end{bmatrix},
\enn
and $\widetilde{T}^*(\pa,\nu)$ is the boundary operator given by
\be
\widetilde T^ * (\partial ,\nu ) = \begin{bmatrix}
	{T(\partial ,\nu )}& -\frac{ \rho_f\omega^2\alpha } {\beta}\nu\\
	{ - \beta {\nu ^\top}}& \partial _{\nu}
\end{bmatrix}.
\label{Tadjoint}
\en
It is known~\cite{CD95,S012} that the fundamental solution of the operator $L^*$ in $\R^3$ is given by
\ben
E(x,y)=
\begin{bmatrix}
	E_{11}(x,y) & E_{12}(x,y) \\
	E_{21}^\top(x,y) & E_{22}(x,y)
\end{bmatrix},\quad x\ne y,
\enn
with
\ben
&& {E_{11}}(x,y) = \frac{1}{\mu}{\gamma _{{k_s}}}(x,y)I + \frac{1}{{(\rho  - \beta {\rho _f})}{\omega ^2}}{\nabla _x}\nabla _x^\top\begin{pmatrix} {\gamma _{{k_s}}}(x,y) - {\frac{k_{p}^{2}-k_{2}^{2}}{k_{1}^{2}-k_2^2}}{\gamma _{{k_1}}}(x,y) \\
+ {\frac{k_{p}^{2}-k_{1}^{2}}{k_{1}^{2}-k_{2}^{2}}}{\gamma _{{k_2}}}(x,y) \end{pmatrix},\\
&& E_{12}(x,y) =  \frac{i\omega\gamma}{(\lambda+2\mu)(k_1^2-k_2^2)}\nabla _x[ \gamma _{k_1}(x,y) - \gamma _{k_2}(x,y) ],\\
&& E_{21}(x,y) =  -\frac{\alpha-\beta}{(\lambda+2\mu)(k_1^2-k_2^2)}\nabla _x\left[ \gamma _{k_1}(x,y) - \gamma _{k_2}(x,y) \right],\\
&& E_{22}(x,y) = -\frac{1}{(k_1^2 - k_2^2)}\left[ (k_p^2 - k_1 ^2)\gamma _{k_1}(x,y) - (k_p^2 - k_2 ^2)\gamma _{k_2}(x,y) \right],
\enn
in which
\ben
\gamma_{k_t}(x,y)=\frac{\mbox{exp}(ik_t\left|x-y\right|)}{4\pi\left|x-y\right|}, \quad x \ne y, \quad t=s,p,1,2,
\enn
is the fundamental solution of the Helmholtz equation in $\R^3$ with wave number $k_t$.

\subsection{Direct method}
\label{sec:3.1}
It follows from the Green's formulas (\ref{SGI}) that the solution of (\ref{model}) can be represented in the form
\be
\label{dr1}
U(x)=\mathcal{D}(U)(x)-\mathcal{S}(\widetilde T(\partial,\nu)U)(x), \quad x\in \Omega^c,
\en
where $\mathcal{S}$ and $\mathcal{D}$ are the single-layer and double-layer potentials given by
\be
\label{single}
\mathcal{S}(\varphi)(x)&:=&\int_\Gamma  (E(x,y))^\top\varphi (y)ds_y,\quad x\notin\Gamma,\\
\label{double}
\mathcal{D}(\varphi)(x)&:=&\int_\Gamma (\widetilde T^*(\pa_y,\nu_y)E(x,y))^\top \varphi(y)ds_y,\quad x\notin\Gamma,
\en
respectively. Introduce the BIOs for the poroelasticity in the sense of principle value or Hadamard finite part as follows
\be
\label{SO}
S(\varphi)(x)&:=&\int_\Gamma  (E(x,y))^\top\varphi (y)ds_y,\quad x\in\Gamma,\\
\label{DO}
K(\varphi)(x)&:=&\int_\Gamma (\widetilde T^*(\pa_y,\nu_y)E(x,y))^\top \varphi(y)ds_y,\quad x\in\Gamma,\\
\label{TDO}
K'(\varphi)(x) &:=&\widetilde T(\partial_x,\nu_x) \int_\Gamma  (E(x,y))^\top\varphi (y)ds_y,\quad x\in\Gamma,\\
\label{HO}
N(\varphi)(x) &:=& \widetilde T(\partial_x,\nu_x)\int_\Gamma  \left(\widetilde T^*(\partial_y,\nu_y)E(x,y)\right)^\top\varphi (y)ds_y,\quad x\in\Gamma,
\en
where $S$, $K$, $K'$ and $N$ are called, respectively, the single-layer, double-layer, transpose of double-layer, and hyper-singular BIOs. Then we conclude the jump relation results associated with the poroelastic layer potentials and BIOs in the following theorem.

\begin{theorem}
\label{jump}
For $x\in\Gamma$, the following jump relations hold:
\ben
&&\lim_{h\rightarrow 0^+,z=x\pm h\nu_x} \mathcal{S}(\varphi)(z)=S(\varphi)(x), \\
&&\lim_{h\rightarrow 0^+,z=x\pm h\nu_x} \mathcal{D}(\varphi)(z)= \left(\pm\frac{1}{2}I+K\right)(\varphi)(x),\\
&&\lim_{h\rightarrow 0^+,z=x\pm h\nu_x} \widetilde T(\partial_z,\nu_x)\mathcal{S}(\varphi)(z)= \left(\mp\frac{1}{2}I+K'\right)(\varphi)(x), \\
&&\lim_{h\rightarrow 0^+,z=x\pm h\nu_x} \widetilde T(\partial_z,\nu_x)\mathcal{D}(\varphi)(z)= N(\varphi)(x).
\enn
\end{theorem}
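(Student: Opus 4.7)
The plan is to exploit the regularized expressions for $K$, $K'$ and $N$ that will be established in Theorems~\ref{regK}--\ref{regN4}. Those identities rewrite each strongly- or hyper-singular operator as a finite combination of tangential-derivative operators pre- and post-composed with at worst weakly-singular integral operators whose kernels are bounded by $C/|x-y|$. Since tangential derivatives act along $\Gamma$, they commute with the nontangential limit $h\to 0^+$, $z=x\pm h\nu_x$. Consequently, the question of the existence and form of the one-sided limits of $\mathcal{S}(\varphi)$, $\mathcal{D}(\varphi)$, $\widetilde T(\pa_z,\nu_x)\mathcal{S}(\varphi)$ and $\widetilde T(\pa_z,\nu_x)\mathcal{D}(\varphi)$ reduces to the corresponding question for weakly-singular integral operators whose kernels are linear combinations of the scalar Helmholtz fundamental solutions $\gamma_{k_s}$, $\gamma_{k_1}$, $\gamma_{k_2}$ (possibly multiplied by smooth tangential factors).

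For the first relation, a direct inspection of $E(x,y)$ suffices: in $E_{11}$ the three scalar coefficients multiplying $\gamma_{k_s}$, $\gamma_{k_1}$ and $\gamma_{k_2}$ sum to zero, so $\nabla_x\nabla_x^\top$ acts on a combination that vanishes like $|x-y|$ near the diagonal and leaves only a $1/|x-y|$ singularity; the blocks $E_{12}$ and $E_{21}$ are first gradients of an analogously vanishing combination and are likewise weakly singular; and $E_{22}$ is directly a combination of $\gamma_{k_1},\gamma_{k_2}$. Hence $\mathcal{S}(\varphi)$ admits the same trace from either side of $\Gamma$ and coincides with $S(\varphi)$.

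For the remaining three relations, the strategy is the same in each case. After invoking Theorems~\ref{regK}--\ref{regN4}, each of $\mathcal{D}(\varphi)(z)$, $\widetilde T(\pa_z,\nu_x)\mathcal{S}(\varphi)(z)$ and $\widetilde T(\pa_z,\nu_x)\mathcal{D}(\varphi)(z)$ is expressed as a classical Helmholtz-type layer potential, possibly pre-composed with tangential derivatives, plus a completely continuous weakly-singular remainder. The Helmholtz-type pieces produce the standard $\pm\tfrac{1}{2}I$ jumps for the double-layer and the transposed double-layer via the well-known formula for $\pa_{\nu_y}\gamma_{k_t}$, with the sign reversal for the latter coming from the outward normal being anchored at $x$ instead of $y$; the hyper-singular case is entirely absorbed into tangential derivatives of weakly-singular operators by Theorem~\ref{regN4} and therefore produces no jump. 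In each case the jump coefficient matches the $4\times 4$ identity because the leading Helmholtz double-layer contribution appears only on the diagonal blocks of $\widetilde T^*(\pa_y,\nu_y)E$ and $\widetilde T(\pa_x,\nu_x)E$.

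The main obstacle is the combinatorial bookkeeping in that last step: one has to check that among all terms produced by applying $\widetilde T^*(\pa_y,\nu_y)$ (resp.\ $\widetilde T(\pa_x,\nu_x)$) to the block kernel $E(x,y)$ and then invoking the regularized formulas, the unique surviving non-weakly-singular contribution is a scalar Helmholtz double-layer kernel on each diagonal block with coefficient exactly one, and that the off-diagonal coupling terms involving $\al$, $\beta$, $\rho_f\om^2$ and $\rho_f\om^2\al/\beta$ all regularize into weakly-singular operators rather than contributing surface masses. Once this identification is verified---and this is precisely what the regularized expressions of Theorems~\ref{regK}--\ref{regN4} are designed to make transparent---the four jump identities follow immediately from the classical jump relations for the scalar Helmholtz single- and double-layer potentials.
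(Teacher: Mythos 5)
Your overall strategy coincides with the paper's: invoke the regularized expressions of Theorems~\ref{regK}--\ref{regN4}, note that tangential derivatives commute with the one-sided limits, and reduce everything to the classical jump relations for scalar Helmholtz layer potentials. Your treatment of the first three relations is sound and matches the paper's proof in Section~\ref{sec:4.3} (in particular, your observation that the coefficients of $\gamma_{k_s},\gamma_{k_1},\gamma_{k_2}$ inside $\nabla_x\nabla_x^\top$ in $E_{11}$ sum to zero, and that the diagonal blocks of $K$ and $K'$ carry exactly one unit-coefficient Helmholtz double-layer kernel each, is precisely the bookkeeping the paper performs for $K_1^1$, $K_4^1$ and their primed analogues).

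There is, however, a genuine gap in your argument for the fourth relation. You assert that the hyper-singular case ``is entirely absorbed into tangential derivatives of weakly-singular operators \ldots and therefore produces no jump.'' That is not what Theorems~\ref{regN1}--\ref{regN3} say. The regularized form of $N_1$ contains the terms $M(\partial,\nu)N_1^4$ and $N_1^5M(\partial,\nu)$, whose kernels involve $\partial_{\nu_y}\gamma_{k_s}(x,y)$ and $\partial_{\nu_x}\gamma_{k_s}(x,y)$; these are double-layer and adjoint-double-layer kernels, not weakly singular ones, and each produces a nonzero surface term in the one-sided limit. The paper shows that $\mathcal{N}_1^4$ contributes $\pm\frac{\mu}{2}u(x)$ and $\mathcal{N}_1^5$ contributes $\mp\frac{\mu}{2}u(x)$, so that after composing the former with $M(\partial_x,\nu_x)$ and feeding $M(\partial,\nu)u$ into the latter the two surface terms $\pm\frac{\mu}{2}M(\partial_x,\nu_x)u(x)$ and $\mp\frac{\mu}{2}M(\partial_x,\nu_x)u(x)$ cancel exactly. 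Analogous cancellations are needed among the $\partial_{\nu_x}$- and $\partial_{\nu_y}$-type kernels appearing in $N_2^1$ and $N_3^1$ (whose coefficient sums are not zero individually). The absence of a jump in $N$ is therefore a consequence of these sign-opposite cancellations, not of an absence of jump-producing kernels; your proof as written would neither detect nor verify them, and this verification is the substantive content of the paper's proof of the last relation.
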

\begin{remark}
The proof of this theorem relies on the study of the regularized expressions of the integral operators that will be presented in Section~\ref{sec:4} and thus, will be reported after that.
\end{remark}

Now applying the jump conditions, we are led to the BIEs on $\Gamma$
\be
\label{dBIEs1}
U(x)=\left(\frac{1}{2}I+K\right)(U)(x)-S(\widetilde T(\partial,\nu)U)(x), \quad x\in \Gamma,
\en
and
\be
\label{dBIEs2}
\widetilde T(\partial_x,\nu_x)(U)(x)=N(U)(x)+\left(\frac{1}{2}I-K'\right)(\widetilde T(\partial,\nu)U)(x),\quad  x\in \Gamma.
\en
Combining the BIEs (\ref{dBIEs1})-(\ref{dBIEs2}) results into the so-called Burton-Miller formulation \cite{BM71} on $\Gamma$
\be
\left[i\eta\left(\frac{1}{2}I-K\right)-N\right](U)(x)+\left[\frac{1}{2}I+K'+i\eta S\right](\widetilde T(\partial,\nu)U)(x)=0,
\en
where $\eta\ne0$ is a combination coefficient. Using the boundary condition (\ref{boundary condtion}), we obtain the direct combined boundary integral equation (DCBIE)
\be
\label{DCBIE}
\left[i\eta\left(\frac{1}{2}I-K\right)-N\right](U)(x)=-\left[\frac{1}{2}I+K'+i\eta S\right](F)(x),\quad x\in \Gamma.
\en

\subsection{Indirect method}
\label{sec:3.2}

The indirect boundary integral formulations can also be used for solving the poroelastic problems, which also allow for a suitable tool to test each operator separately. From the potential theory, the unknown function $U$ of (\ref{model}) can be represented by a combination of the single-layer and double-layer potentials
\be
U(x)=(\mathcal{D}-i\eta \mathcal{S})(\varphi)(x), \qquad x\in \Omega^c, \quad \eta \ne 0.
\label{CSDLP}
\en
Operating with the boundary operator $\widetilde T(\pa,\nu)$ on $(\ref{CSDLP})$, taking the limit as in Theorem~\ref{jump} and applying the boundary condition (\ref{boundary condtion}), we can obtain the indirect combined boundary integral equation (ICBIE)
\be
\label{ICBIE}
\left[i\eta\left( \frac{I}{2}-K^\prime \right)+N\right](\varphi)(x)&=&F, \quad x\in \Gamma.
\en

%\begin{remark}
%As pointed out in \cite[Remark 2.2]{ZXY21}, the unique solvability of BIEs (\ref{ICBIE}) and also (\ref{DCBIE}), which still remains open, can not be derived by means of the standard approach to prove the corresponding unique solvability of CBIEs for acoustic and elastic problems~\cite{BXY17,CK98}. But the numerical verification in \cite{ZXY21} indicates the nonexistence of eigenfrequency to a certain extent.
%\end{remark}

\subsection{Operator properties}
\label{sec:3.3}

Assuming that the boundary $\Gamma$ is sufficiently smooth, the BIOs are continuous mappings between the following spaces~\cite{HW08,YHX17}
\be
S\quad &:& \quad (H^{-1/2}(\Gamma))^4\to (H^{1/2}(\Gamma))^4,\\
K,K^\prime \quad&:& \quad (H^{\pm 1/2}(\Gamma))^4\to (H^{\pm 1/2}(\Gamma))^4,\\
N \quad&:&\quad (H^{1/2}(\Gamma))^4\to (H^{-1/2}(\Gamma))^4.
\en
and the following Calder\'{o}n relations hold:
\be
SN&=& K^2-\frac{1}{4}I, \quad NS={K^\prime}^2-\frac{1}{4}I,\\
KS&=&SK^\prime,\quad NK=K^\prime N.
\en

As analytical preconditioning techniques, the Calder\'{o}n relations have been investigated and utilized in regularized BIE methods~\cite{BET12,BY20,ZXY21}, which require the spectral study of the BIOs, to construct BIE systems possessing highly favorable spectral properties. The main reason is that the eigenvalues of the hyper-singular integral operator $N$ accumulate at infinity. As a result, obtaining the solutions of some integral equations, for example (\ref{DCBIE}) and (\ref{ICBIE}) in this work, by means of Krylov-subspace iterative solvers such as GMRES generally requires large numbers of iterations. To overcome this difficulty, the spectral properties of the integral operators $K,K'$ and the associated Calder\'{o}n relations $NS$ for two-dimensional poroelasticity are investigated in \cite{ZXY21} and then a regularized BIE method is proposed. However, as proved in the following theorem, the three-dimensional integral operators $K$ and $K'$ enjoy spectral properties different from those in the two-dimension case.

\begin{theorem}
\label{spectra}
Let $\widetilde{I}_{\lambda,\mu}$ be a matrixed operator given by
\ben
\widetilde{I}_{\lambda,\mu}=\begin{bmatrix}
	C_{\lambda ,\mu }^2I & 0\\
	0 & 0
	\end{bmatrix},
\enn
where $C_{\lambda,\mu}$ is a constant satisfying
\ben
	0<C_{\lambda ,\mu }= \frac{\mu }{2(\lambda  + 2\mu )} < \frac{3}{8}.
\enn
Then $K^\prime({K^\prime}^2-\widetilde{I}_{\lambda,\mu}): (H^{1/2}(\Gamma))^4\to (H^{1/2}(\Gamma))^4$ is compact. Furthermore, the spectrum of $K^{\prime}$ consists of three non-empty sequences of eigenvalues which converge to $0$, $C_{\lambda,\mu}$ and $-C_{\lambda,\mu}$ respectively.
\end{theorem}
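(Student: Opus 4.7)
The plan is to reduce the statement to the polynomial compactness of the three-dimensional elastic adjoint double-layer $K'_{el}$ proved in \cite{BY20}, by showing that $K'$ differs from a block operator built out of $K'_{el}$ only by a compact perturbation. First I would analyze $K'\varphi = \widetilde T(\partial_x,\nu_x) \int_\Gamma E(x,y)^\top \varphi(y)\,ds_y$ block by block, using the explicit form of $E$. The crucial analytic fact is that $\gamma_{k_1}(x,y) - \gamma_{k_2}(x,y)$ has its leading $|x-y|^{-1}$ singularity cancelled, so the off-diagonal blocks $E_{12}$, $E_{21}$ and the scalar block $E_{22}$ all carry weakly singular kernels, and the top-left block $E_{11}$ differs from the pure elastic fundamental solution by a weakly singular correction. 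Combined with the regularized expressions for $K'$ derived in Section~\ref{sec:4}, this yields the decomposition
\begin{equation*}
K' = K'_{\mathrm{diag}} + \mathcal C_0, \qquad K'_{\mathrm{diag}} = \begin{bmatrix} K'_{el} & 0 \\ 0 & K'_0 \end{bmatrix},
\end{equation*}
where $K'_{el}$ is the classical three-dimensional elastic adjoint double-layer with Lam\'e constants $\lambda,\mu$, $K'_0$ is the Helmholtz-type adjoint double-layer generated by the scalar block $E_{22}$ (weakly singular on the smooth surface $\Gamma$, hence itself compact on $H^{1/2}(\Gamma)$), and $\mathcal C_0:(H^{1/2}(\Gamma))^4\to(H^{1/2}(\Gamma))^4$ is compact.

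With this decomposition in hand, I would expand
\begin{equation*}
K'(K'^2 - \widetilde I_{\lambda,\mu}) = K'_{\mathrm{diag}}(K'^2_{\mathrm{diag}} - \widetilde I_{\lambda,\mu}) + \mathcal R,
\end{equation*}
where $\mathcal R$ gathers every term that contains at least one factor of $\mathcal C_0$ and is therefore compact (as a sum of products of a compact operator with bounded ones, using also that $\widetilde I_{\lambda,\mu}$ is block-diagonal with zero in the pressure slot). The principal block $K'_{\mathrm{diag}}(K'^2_{\mathrm{diag}} - \widetilde I_{\lambda,\mu})$ itself splits as a block-diagonal operator with top block $K'_{el}((K'_{el})^2 - C_{\lambda,\mu}^2 I)$ and bottom block $(K'_0)^3$. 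The top block is compact by the three-dimensional elastic result in \cite{BY20}, since $C_{\lambda,\mu} = \mu/[2(\lambda + 2\mu)]$ is precisely the critical constant arising there (the upper bound $C_{\lambda,\mu} < 3/8$ is equivalent to $\lambda/\mu > -2/3$, which is implied by $3\lambda + 2\mu > 0$); the bottom block is compact as the cube of a compact operator. This establishes compactness of $K'(K'^2 - \widetilde I_{\lambda,\mu})$.

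The spectral statement then follows from the standard polynomial-compactness spectral theorem: since $p(K')$ is compact for $p(z) = z(z^2 - C_{\lambda,\mu}^2)$, the essential spectrum of $K'$ is contained in the roots $\{0, C_{\lambda,\mu}, -C_{\lambda,\mu}\}$, and for every $z_0$ outside this finite set the operator $K' - z_0 I$ is Fredholm of index zero, so the spectrum of $K'$ outside these three points consists of isolated eigenvalues of finite multiplicity that can only accumulate at $0$, $C_{\lambda,\mu}$, or $-C_{\lambda,\mu}$. The main obstacle is proving that each of the three sequences is actually non-empty. I would establish this by transferring the corresponding non-emptiness from $K'_{el}$ (part of the three-dimensional result in \cite{BY20}, obtained there via the symbol calculation that identifies $K'_{el}$ as a pseudodifferential operator of order zero whose principal symbol has exactly the three eigenvalues $0$ and $\pm C_{\lambda,\mu}$ at every point of $\Gamma$) to $K'$ via the compact perturbation $\mathcal C_0$ together with the block-diagonal embedding into the $4\times 4$ system; Weyl-type stability of the essential spectrum under compact perturbation then forces each of the three spectral projectors of $K'$ associated with a small neighborhood of one of the three accumulation points to have infinite rank, which is equivalent to the non-emptiness of the corresponding eigenvalue sequence.
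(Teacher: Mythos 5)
Your proposal is correct and follows essentially the same route as the paper: both arguments reduce the claim to the polynomial compactness of the three-dimensional elastic adjoint double-layer operator with the critical constant $C_{\lambda,\mu}=\mu/(2(\lambda+2\mu))$ (the paper first passes to the static operator $K_0'$ and invokes \cite{AKM}, while you invoke the dynamic elastic result of \cite{BY20}, which rests on the same theorem), and both exploit that every block of $K'$ other than the elastic one has a weakly singular kernel and is therefore compact, so that the cubic in $K'$ differs from a block (upper-triangular, resp.\ diagonal) model operator by a compact perturbation. Your treatment of the non-emptiness of the three eigenvalue sequences, via the principal symbol and Weyl stability of the essential spectrum, is in fact more explicit than the paper's, which delegates that point to the two-dimensional argument of \cite{ZXY21}.
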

\begin{proof}
Analogous to the proof of \cite[Theorem 3.1]{ZXY21}, it is sufficient to consider the static $(\omega=0)$ BIO corresponding to $K^\prime$ which can be formulated as
\ben
	K_0'(U)(x) = \widetilde T_0(\partial _x,\nu _x)\int_\Gamma (E_0(x,y))^\top U(y)ds_y=\begin{bmatrix}
		K'_{1,0}& K'_{2,0}\\
		K'_{3,0}& K'_{4,0}
	\end{bmatrix}\begin{bmatrix}
			u\\
			p
	\end{bmatrix}(x),
\enn
where
\ben
	\widetilde T_0(\partial _x,\nu _x)=\begin{bmatrix}
		T(\partial _x,\nu _x)&-\alpha\nu_x\\
		0& \partial_{\nu_x}
	\end{bmatrix},
\enn
and
\ben
	E_{0}(x,y)=\begin{bmatrix}
		E_{0,11}&E_{0,12}\\
		E_{0,21}^\top&E_{0,22}
	\end{bmatrix}=\begin{bmatrix}
		E_{e,0}&0 \\
		\frac{\alpha(x-y)^\top}{8\pi(\lambda+2\mu)\left|x-y\right|}& \frac{1}{4\pi\left|x-y\right|}
	\end{bmatrix}
\enn
is the fundamental solution of static poroelastic problem with
	\begin{equation*}
	E_{e,0}(x,y)=\frac{\lambda+3\mu}{8\pi\mu(\lambda+2\mu)}\left\{ \frac{1}{\left|x-y\right|}I+\frac{\lambda+\mu}{(\lambda+3\mu)}\frac{(x-y)(x-y)^\top}{\left|x-y\right|^3} \right\}
	\end{equation*}
being the fundamental solution of Lam\'e equation. It can be verified that $K_{3,0}'=0$ and the kernels of $K'_{j,0},j=2,4$ admit weak singularity implying that $K'_{j,0},j=2,4$ are compact. From \cite{AKM}, we know that $K_{1,0}^\prime({K^\prime}^2_{1,0}-C_{\lambda ,\mu }^2I): (H^{1/2}(\Gamma))^4\to (H^{1/2}(\Gamma))^4$ is compact. Therefore,
	\ben
	&&K_0^\prime({K_0^\prime}^2-\widetilde{I}_{\lambda,\mu})\\
    &=&\begin{bmatrix}
	K_{1,0}^\prime({K^\prime}^2_{1,0}-C_{\lambda ,\mu }^2I)&{K^\prime}^2_{1,0}K^\prime_{2,0}+K^\prime_{1,0}K^\prime_{2,0}K^\prime_{4,0}+K^\prime_{2,0}{K^\prime}^2_{4,0}\\
	0&{K^\prime}^3_{4,0}
	\end{bmatrix}
	\enn
	is compact. A direct calculation yields
	\ben
	&&K^\prime({K^\prime}^2-\widetilde{I}_{\lambda,\mu})=K_0^\prime({K_0^\prime}^2-\widetilde{I}_{\lambda,\mu})+K_c,\\
    &&K_c= K_0^\prime(K^\prime-K^\prime_0)K^\prime+ {K_0^\prime}^2(K^\prime-K^\prime_0)+(K^\prime-K^\prime_0)({K_0^\prime}^2-\widetilde{I}_{\lambda,\mu}).
	\enn
The compactness of $K^\prime-K^\prime_0$ indicates the compactness of $K_c$ and then further implies that $K^\prime({K^\prime}^2-\widetilde{I}_{\lambda,\mu})$ is compact. The proof is completed.  	
\end{proof}

The spectral property of the operator $K$ is similar to $K'$. Relying on the Calder\'{o}n relations~\cite{HW08}
\be
NS={K^\prime}^2-\frac{1}{4}I,\quad SN=K^2-\frac{1}{4}I,
\en
we can conclude that the spectrum of bith the composite operators $NS$ and $SN$ consist of two non-empty sequences of eigenvalues which converge to $- \frac{1}{4}$ and $-\frac{1}{4}+C_{\lambda,\mu}^2$, respectively.

To verify the above results numerically, we consider the problem of poroelastic scattering by a unit ball, and we choose the same values of parameters as in Section~\ref{sec:6}. Consequently, the constant $C_{\lambda,\mu}=0.1875$. Utilizing the following re-expressions of the BIOs together with the discretization method presented in Sections~\ref{sec:4}-\ref{sec:5}, the eigenvalue distributions of the integral operators $K'$, $K$, $NS$ and $SN$ are displayed in Figure~\ref{KNSeig} showing an agreement with our theoretical results. Based on these results, the corresponding regularized BIE method is proposed in Section~\ref{sec:5} for solving the three-dimensional poroelastic problem.

\begin{figure}[htb]
	\centering
	\begin{tabular}{cc}
		\includegraphics[scale=0.25]{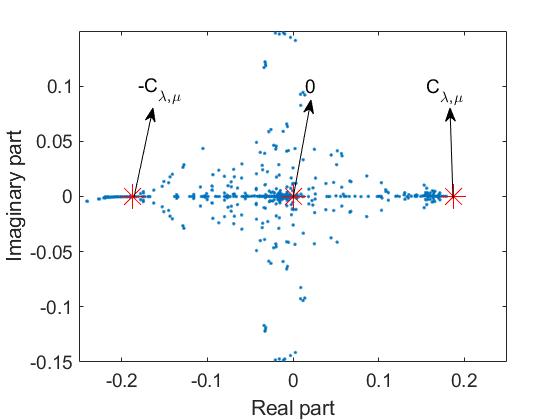} &
		\includegraphics[scale=0.25]{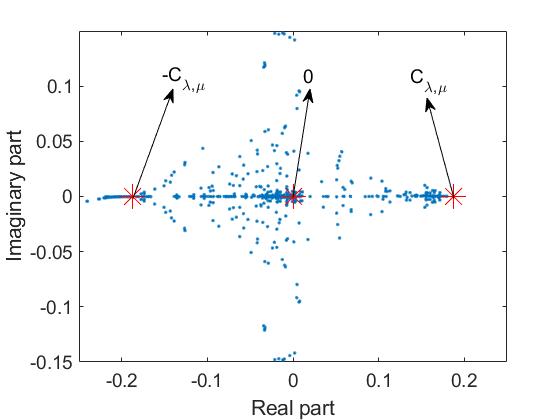} \\
		(a) $K'$ & (b) $K$  \\
		\includegraphics[scale=0.25]{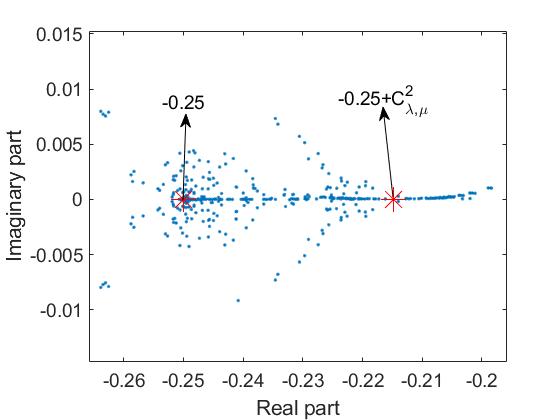} &
		\includegraphics[scale=0.25]{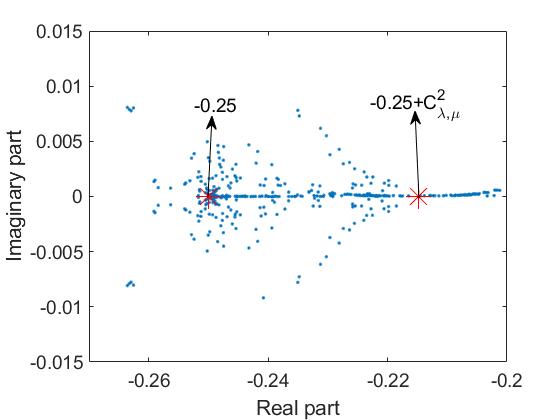}  \\
		(c) $NS$ & (d) $SN$  \\
	\end{tabular}
	\caption{Eigenvalue distributions of the operators $K'$ (a), $K$ (b), $NS$ (c) and $SN$ (d) for a unit ball scatterer.}
	\label{KNSeig}
\end{figure}

\section{Strong-singularity and hyper-singularity regularization}
\label{sec:4}

As aforementioned, the integral operators $K$ (as well as $K'$) and $N$ are strongly-singular and hyper-singular, respectively. In this section, we will present new (regularized) expressions for these operators. More precisely, with the help of the tangential G\"{u}nter derivative~\cite{BXY191}, these operators will be expressed in terms of compositions of weakly-singular integral operators ($K_j^i$, $K_j^{\prime i}$, $N_j^i$ appearing in the following Theorems (\ref{regK})-(\ref{regN4})) and tangential-derivative operators ({\it The formulations to derive these regularized expressions will be shown in the supplemented material.}). Employing these formulations together with the rectangular-polar quadrature method proposed in~\cite{BG20,BY20} and the iteration solver GMRES then leads to our boundary integral solver for the poroelastic problem.

We begin with the G\"{u}nter derivative operator $M(\partial ,\nu )$ defined as
\ben
M(\partial,\nu)u(x)=\partial_\nu u-\nu(\nabla  \cdot  u)+\nu  \times {\rm{curl}}\, u.
\enn
From~\cite{BY20}, we know that
\ben
M(\partial ,\upsilon ) = \begin{pmatrix}
	0&-\widetilde \partial_3^S&\widetilde \partial_2^S\\
	\widetilde \partial_3^S&0&-\widetilde \partial_1^S\\
	-\widetilde \partial_2^S&\widetilde \partial_1^S&0
	\end{pmatrix},
\enn
where $\widetilde \partial_i^S$, $i=1,2,3$ are the components of $\nu\times\nabla^S=(\partial_1^S,\partial_2^S,\partial_3^S)^\top$, in which $\nabla^S$ is the surface gradient defined as
$\nabla^S=\nabla u-\nu\partial_\nu u$. Then the traction operator $T(\partial ,\nu )$ can be rewritten as
\ben
T(\partial ,\nu )u(x) = (\lambda  + 2\mu )\nu (\nabla  \cdot u) + \mu {\partial _\nu }u + \mu M(\partial ,\nu )u.
\enn

\subsection{Strong-singularity regularization}
\label{sec:4.1}
We first consider the operators $K$ and $K'$ in the forms of
\ben
K(U)(x)= \begin{bmatrix}
	K_1& K_2\\
	K_3 & K_4
\end{bmatrix}\begin{bmatrix}
	u\\
	p
\end{bmatrix}(x),\quad K'(U)(x)= \begin{bmatrix}
	K_1'& K_2'\\
	K_3' & K_4'
\end{bmatrix}\begin{bmatrix}
	u\\
	p
\end{bmatrix}(x),\quad x\in\Gamma.
\enn
Then the following regularized formulations can be obtained.

\begin{theorem}
	\label{regK}
	The operators $K_j,j=1,\cdots,4$ can be expressed as
	\be
	\label{KP}
	K_j=K_j^1+K_j^2M(\partial,\nu)+\left\{ {K_j^3M(\partial,\nu)} \right\}^\top,\quad j=1,\cdots,4,
	\en	
	where $K_1^3=K_2^2=K_3^3=K_4^2=K_4^3=0$, and
	\ben
	&&K_1^1(u)(x)=\int_\Gamma \left[\partial_{\nu _y}\gamma_{k_s}(x,y)I-\frac{\rho_f\omega^2\alpha}{\beta}E_{21}(x,y)\nu_y^\top\right]u(y)ds_y \\
    &&\quad -\int_\Gamma  \nabla _y\left[  (\gamma _{k_s}(x,y) - \gamma _{k_1}(x,y)) - \frac{k_2^2 - q}{k_1^2 - k_2^2}(\gamma _{k_1}(x,y) - \gamma _{k_2}(x,y))\right]\nu _y^\top u(y) ds_y, \\
	&&K_1^2(u)(x) = \int_\Gamma  \left[2\mu E_{11}(x,y) - \gamma _{k_s}(x,y)I\right] u(y)ds_y,\\
	&&K_2^1(p)(x)\\
	&&=-\int_\Gamma  \left[\frac{\alpha-\beta }{(k_1^2 - k_2^2)(\lambda+2\mu)} (k_1^2\gamma _{k_1}(x,y) - k_2^2\gamma _{k_2}(x,y)) + \beta E_{11}(x,y)\right]\nu _yp(y)ds_y,\\
	&&K_2^3(p)(x) = \frac{\alpha-\beta }{(\lambda+2\mu)(k_1^2-k_2^2)}\int_\Gamma \nabla _y^\top\left[\gamma_{k_1}(x,y) - \gamma_{k_2}(x,y)\right]p(y)ds_y,\\
	&&K_3^1(u)(x) \\
	&&= \int_\Gamma  \left[  \frac{i\omega\gamma}{(k_1^2-k_2^2)}(k_1^2\gamma _{k_1}(x,y)-k_2^2\gamma_{k_2}(x,y))-\frac{\rho_f\omega^2\alpha}{\beta}E_{22}\right]\nu _y^\top u(y)ds_y,\\
	&&K_3^2(u)(x)=-\frac{2i\mu\omega\gamma}{(k_1^2-k_2^2)(\lambda+2\mu)}\int_\Gamma \nabla_y^\top\left[\gamma _{k_1}(x,y) - \gamma_{k_2}(x,y)\right]u(y) ds_y,\\
	&&K_4^1(p)(x)=\frac{i\omega\gamma\beta}{(\lambda+2\mu)(k_1^2-k_2^2)}\int_\Gamma \partial_{\nu_y}\left[\gamma _{k_1}(x,y) - \gamma_{k_2}(x,y)\right]p(y)ds_y\\  &&\quad-\frac{1}{k_1^2-k_2^2}\int_\Gamma\partial_{\nu_y}\left[(k_p^2-k_1^2)\gamma_{k_{1}}(x,y)-(k_p^2-k_2^2) \gamma_{k_{2}}(x,y)\right]p(y)ds_y.	
\enn
\end{theorem}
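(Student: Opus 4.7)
The argument proceeds block-by-block. First expand $(\widetilde T^*(\partial_y,\nu_y) E(x,y))^\top$ using the $2\times 2$ block structure of $\widetilde T^*$ and $E$; this identifies, for each of the four blocks, which portions of the kernel are acted on by the full traction $T(\partial_y,\nu_y)$ (strongly singular) and which only by a scalar factor $\nu_y$ or $\partial_{\nu_y}$ hitting $E_{21},E_{22}$ (weakly singular on smooth $\Gamma$, since $\nu_y\cdot(x-y)=O(|x-y|^2)$). The weakly-singular contributions from the latter feed straight into the displayed $K_j^1$.

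The strongly-singular $T(\partial_y,\nu_y)$-pieces are then processed through the G\"unter decomposition
\begin{equation*}
T(\partial,\nu) = \mu\partial_\nu + (\lambda+2\mu)\nu(\nabla\cdot\,) + \mu M(\partial,\nu),
\end{equation*}
which splits them into three kinds of terms. The normal-derivative piece $\mu\partial_{\nu_y}$ is weakly singular by the same geometric argument. The G\"unter piece $\mu M(\partial_y,\nu_y)$ is transferred from the kernel onto the density via the Stokes identity on the closed smooth surface $\Gamma$, namely $\int_\Gamma (\widetilde\partial_i^S f)\,g\,ds=-\int_\Gamma f\,\widetilde\partial_i^S g\,ds$, which produces exactly the compositions $K_j^2 M(\partial,\nu)$ and $\{K_j^3 M(\partial,\nu)\}^\top$ with weakly-singular kernels involving either $\gamma_{k_s}$ or the difference $\gamma_{k_1}-\gamma_{k_2}$ (which is regular at $x=y$). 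The divergence piece $(\lambda+2\mu)\nu_y(\nabla_y\cdot\,)$ acting on $E_{11}$ or $E_{12}$ is reduced using the Helmholtz identities $(\Delta+k_t^2)\gamma_{k_t}=-\delta$, which replace second derivatives of $\gamma_{k_t}$ by $k_t^2\gamma_{k_t}$ up to algebraic multiples; the residual first-derivative terms $\nabla_y\gamma_{k_t}$ paired against $\nu_y^\top u(y)$ or $\nu_y p(y)$ are weakly singular and also absorbed into $K_j^1$.

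The main obstacle is expected to be the coupling blocks $K_2$ and $K_3$. There $E_{12}$ and $E_{21}$ already contain a factor $\nabla_x[\gamma_{k_1}-\gamma_{k_2}]$, so after applying $T(\partial_y,\nu_y)$ or $\nabla_y\cdot\,$ one ostensibly meets third-order derivatives of $\gamma_{k_t}$. The cancellations that collapse these to the weakly-singular forms claimed will use the wave-number identities
\begin{equation*}
k_1^2+k_2^2 = q(1+\epsilon)+k_p^2,\qquad k_1^2 k_2^2 = q k_p^2,
\end{equation*}
together with the specific coefficients $\frac{\alpha-\beta}{(\lambda+2\mu)(k_1^2-k_2^2)}$ and $\frac{k_p^2-k_t^2}{k_1^2-k_2^2}$ appearing in $E$, so that the $k_t^2\gamma_{k_t}$-terms coming from the Helmholtz reduction combine correctly. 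Once this algebra is in place, the vanishing claims $K_1^3=K_2^2=K_3^3=K_4^2=K_4^3=0$ follow from the fact that $M(\partial,\nu)$ is naturally defined on $3$-vectors only (so it cannot multiply a scalar density in $K_2^2,K_4^2,K_4^3$), and that in each of the vector-valued blocks $K_1,K_3$ only one placement of the tangential derivative survives the integration by parts. This will produce precisely the stated regularized expressions for $K_j^1,K_j^2,K_j^3$.
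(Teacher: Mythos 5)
Your strategy coincides with the paper's: expand the $2\times 2$ block structure, isolate the pieces hit by the full traction $T(\partial_y,\nu_y)$, decompose $T$ via the G\"unter derivative, push $M(\partial_y,\nu_y)$ onto the density with the Stokes formulas, and use the Helmholtz equation to trade $\Delta_y\gamma_{k_t}$ for $-k_t^2\gamma_{k_t}$ (the paper packages the $E_{11}$ block into the ready-made identity $T(\partial_y,\nu_y)E_{11}=-\nu_y\nabla_y^\top[\cdots]+\partial_{\nu_y}\gamma_{k_s}I+M(\partial_y,\nu_y)[2\mu E_{11}-\gamma_{k_s}I]$ quoted from its reference on elastodynamics, but that is the same computation you describe). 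Two points in your write-up are off, though. First, the decomposition you state, $T=\mu\partial_\nu+(\lambda+2\mu)\nu(\nabla\cdot\,)+\mu M$, is not an identity: since $\mu M u=\mu\partial_\nu u-\mu\nu(\nabla\cdot u)+\mu\nu\times\curl u$, the correct coefficient is $(\lambda+\mu)$. (The paper itself has this typo in Section 4 but uses the correct form in the appendix.) If you run your reduction with $(\lambda+2\mu)$ you get $T\nabla=(\lambda+3\mu)\nu\Delta+2\mu M\nabla$ instead of the correct $T\nabla=(\lambda+2\mu)\nu\Delta+2\mu M\nabla$, and the constants in $K_2^1$ and $K_3^1$ come out wrong. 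Second, your explanation of the vanishing pattern is not right: $M(\partial,\nu)$ does act on scalar densities through the entrywise Stokes formula $\int_\Gamma(m^{ij}p)q\,ds=-\int_\Gamma p\,(m^{ij}q)\,ds$, and indeed the surviving term in $K_2$ is exactly $\{K_2^3M(\partial,\nu)\}^\top$ applied to the scalar $p$; the zeros $K_2^2=K_4^2=K_4^3=0$ are not forced by type considerations but simply record that the integration by parts for $K_2$ lands in the transposed slot and that $K_4$ needs no integration by parts at all (its kernels $E_{12}^\top\nu_y$ and $\partial_{\nu_y}E_{22}$ are already normal derivatives of kernels that are at worst weakly singular). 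Both slips are local and fixable, and the remainder of your outline, including the use of $k_1^2k_2^2=qk_p^2$ and $k_1^2+k_2^2=q(1+\epsilon)+k_p^2$ to collapse the coupling blocks, matches the paper's computation.
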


\begin{theorem}
	\label{regKP}
	The operators $K'_j,j=1,\cdots,4$ can be expressed as
	\be
	\label{KPP}
	K'_j=K_j^{\prime1}+M(\partial_x,\nu_x)K_j^{\prime2}+M(\partial_x,\nu_x):K_j^{\prime3},\quad j=1,\cdots,4,
	\en
	where $K_1^{\prime3}=K_2^{\prime3}=K_3^{\prime2}=K_4^{\prime2}=K_4^{\prime3}=0$, and
	\ben
	&&K_1^{\prime1}(u)(x)=\int_\Gamma  \left[\partial_{\nu_x}\gamma_{k_s}(x,y)I - \alpha \nu_xE_{12}^\top(x,y) \right] u(y)ds_y\\
    &&\quad-\int_\Gamma  \nu _x\nabla _x^\top\left[  (\gamma _{k_s}(x,y) - \gamma _{k_1}(x,y))-  \frac{k_2^2 - q}{k_1^2 - k_2^2}(\gamma _{k_1}(x,y) - \gamma _{k_2}(x,y))\right]u(y) ds_y, \\
	&&K_1^{\prime2}(u)(x) = \int_\Gamma  \left[2\mu E_{11}(x,y) - \gamma _{k_s}(x,y)I\right] u(y)ds_y,\\
	&&K_2^{\prime1}(p)(x) \\
	&&= \int_\Gamma  \left[\frac{\alpha-\beta }{k_1^2 - k_2^2} (k_1^2\gamma _{k_1}(x,y) - k_2^2\gamma _{k_2}(x,y))\nu _x - \alpha E_{22}(x,y)\nu _x\right]p(y)ds_y,\\
	&&K_2^{\prime2}(p)(x) = -\frac{2\mu(\alpha-\beta )}{(\lambda+2\mu)(k_1^2-k_2^2)}\int_\Gamma \nabla _x\left[\gamma_{k_1}(x,y) - \gamma_{k_2}(x,y)\right]p(y)ds_y,\\
	&&K_3^{\prime1}(u)(x) =-\rho_f\omega^2 \int_\Gamma  \nu _x^\top E_{11}(x,y) u(y)ds_y\\
	&&\quad-\frac{i\omega\gamma}{(k_1^2-k_2^2)(\lambda+2\mu)}\int_\Gamma\left[k_1^2\gamma _{k_1}(x,y) - k_2^2\gamma_{k_2}(x,y)\right]\nu _x^\top u(y)ds_y,\\
	&&K_3^{\prime3}(u)(x)=\frac{i\omega\gamma}{(k_1^2-k_2^2)(\lambda+2\mu)}\int_\Gamma u(y)\nabla_x^\top\left[\gamma _{k_1}(x,y) - \gamma_{k_2}(x,y))\right] ds_y,\\
	&&K_4^{\prime1}(p)(x)\\
	&&=-\int_\Gamma  \nu_x^\top\left[\rho_f\omega^2 E_{21}(x,y) + \nabla _x\left(\frac{k_p^2 - k_1^2}{k_1^2 - k_2^2}\gamma_{k_1}(x,y) - \frac{k_p^2 - k_2^2}{k_1^2 - k_2^2}\gamma _{k_2}(x,y)\right)\right] p(y)ds_y.
\enn
\end{theorem}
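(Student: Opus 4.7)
The plan is to derive the decomposition block-by-block, starting from the explicit formula
\ben
K'(U)(x)= \widetilde T(\partial_x,\nu_x)\int_\Gamma (E(x,y))^\top \begin{bmatrix}u\\p\end{bmatrix}(y)\,ds_y,
\enn
expanding the $2\times 2$ block structures of both $\widetilde T$ (from \eqref{boundary condtion}) and $E^\top$, and then applying to each elastic traction piece the Günter splitting
\ben
T(\partial_x,\nu_x)v = (\lambda+2\mu)\nu_x(\nabla_x\cdot v) + \mu\partial_{\nu_x}v + \mu M(\partial_x,\nu_x)v.
\enn
The last term is the source of the $M(\partial_x,\nu_x)K_j^{\prime 2}$ and $M(\partial_x,\nu_x):K_j^{\prime 3}$ summands: since $M(\partial_x,\nu_x)$ is a first-order operator acting on the $x$-variable, it commutes with the $y$-integration, and what remains inside — such as $2\mu E_{11}(x,y)-\gamma_{k_s}(x,y)I$ for $K_1'$, or $\nabla_x[\gamma_{k_1}-\gamma_{k_2}]$ for $K_2'$ and $K_3'$ — is weakly singular by design, because the $|x-y|^{-2}$ leading singularity of the $\nabla_x\nabla_x^\top$-type component of the fundamental solution is cancelled against the $\gamma_{k_s}I$ contribution through the exact Lamé coefficients appearing in $E_{11}$.

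The remaining $(\lambda+2\mu)\nu_x(\nabla_x\cdot)$ and $\mu\partial_{\nu_x}$ pieces — together with the $-\alpha\nu_x$, $-\rho_f\omega^2\nu_x^\top$, and $\partial_{\nu_x}$ contributions from the off-diagonal and scalar blocks of $\widetilde T$ — must be reassembled into the weakly-singular $K_j^{\prime 1}$. The crucial tool is the Helmholtz identity $(\Delta_x+k_t^2)\gamma_{k_t}(x,y)=0$ for $x\neq y$, which converts any Laplacian produced by $\nabla_x\cdot(\nabla_x[\cdots])$ into a mere multiplication by $-k_t^2$, thus collapsing a strongly-singular kernel into a weakly-singular one. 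Combined with $\nabla_x\gamma_{k_t}=-\nabla_y\gamma_{k_t}$ and the explicit expressions for $E_{11}, E_{12}, E_{21}, E_{22}$, this yields precisely the combinations $k_1^2\gamma_{k_1}-k_2^2\gamma_{k_2}$, $(k_p^2-k_i^2)\gamma_{k_i}$ and $(k_2^2-q)/(k_1^2-k_2^2)$ appearing in the claimed expressions of $K_1^{\prime 1}$ and $K_2^{\prime 1}$, $K_3^{\prime 1}$, $K_4^{\prime 1}$. The scalar-to-scalar block $K_4'$ is immediate: $E_{21}$ is weakly singular and $\partial_{\nu_x}\gamma_{k_t}$ is weakly singular on a smooth closed surface, so no Günter derivative is needed, explaining $K_4^{\prime 2}=K_4^{\prime 3}=0$.

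The main obstacle is not any individual identity but the simultaneous bookkeeping: for each of the four blocks one must track three contributions from the traction split, two block rows of $\widetilde T$, and the coupling structure of $E^\top$, and verify that every resulting $|x-y|^{-2}$ or worse singularity is matched by an algebraic cancellation supplied by the specific coefficients of the poroelastic fundamental solution. The elastic block $K_1'$ is the hardest, inheriting the full Günter-derivative regularization of the static elastic single-layer developed in \cite{BXY191,BY20} now perturbed by the coupling $E_{12}$ which produces the extra $-\alpha\nu_x E_{12}^\top$ term inside $K_1^{\prime 1}$; a secondary nuisance is distinguishing between the outer-product form $M(\partial_x,\nu_x):K_j^{\prime 3}$ (used in $K_3'$, where $K_3^{\prime 3}(u)$ is a $3\times 3$ matrix contracted with the antisymmetric matrix $M$) and the plain left action $M(\partial_x,\nu_x)K_j^{\prime 2}$ used in $K_1'$ and $K_2'$, which forces a careful choice of which $\nabla_x$ factor to absorb into the Günter derivative.
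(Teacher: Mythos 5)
Your proposal follows essentially the same route as the paper's appendix proof: block decomposition of $K'$ against the blocks of $\widetilde T(\partial_x,\nu_x)$ and $E^\top$, the G\"unter regularization of $T(\partial_x,\nu_x)E_{11}$ from \cite{BXY191}, the identities $\partial_\nu\nabla-M(\partial,\nu)\nabla=\nu\Delta$ and $T(\partial,\nu)\nabla=(\lambda+2\mu)\nu\Delta+2\mu M(\partial,\nu)\nabla$ combined with the Helmholtz equation to collapse the Laplacians of $\gamma_{k_i}$ into $-k_i^2\gamma_{k_i}$, and the rewriting $\{M(\partial_x,\nu_x)\nabla_x R_1\}^\top u=M(\partial_x,\nu_x):(u\nabla_x^\top R_1)$ that produces the contraction form of $K_3^{\prime3}$. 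One small caution: the splitting of $T$ acting on a \emph{general} vector field carries the coefficient $(\lambda+\mu)\nu(\nabla\cdot{})$ with a single $\mu M$ term (the $(\lambda+2\mu)$ coefficient arises only after applying $T$ to a gradient and absorbing $\partial_\nu\nabla-M\nabla=\nu\Delta$), so the version you quote must not be used verbatim on the non-gradient part $\mu^{-1}\gamma_{k_s}I$ of $E_{11}$.
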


\subsection{Hyper-singularity regularization}
\label{sec:4.2}

In this subsection, we investigate the hyper-singular operator $N$ with
\ben
N(\psi)(x)=\begin{bmatrix}
	N_1 & N_2\\
	N_3 & N_4
\end{bmatrix}\begin{bmatrix}
	u\\
	p
\end{bmatrix}(x),\quad x\in\Gamma.
\enn
The regularized formulations for the operators $N_j,j=1,\cdots,4$ are given in the following theorems.
\begin{theorem}
	\label{regN1}
	The hyper-singular operator $N_1$ can be expressed as
	\ben
	\label{N1}
	N_1=N^1_1+M(\partial,\nu)N^2_1M(\partial,\nu)+\tau_2N^3_1\tau_1+M(\partial,\nu)N^4_1+N^5_1M(\partial,\nu),
	\enn
	where
	\ben
	&&N^1_{1}(u)(x)=-(\rho-\beta\rho_f) \omega^2\int _\Gamma \gamma_{k_s}\left(x,y\right)\left(\nu_x\nu_y^\top -\nu^\top_x\nu_yI\right)u(y)ds_{y}\\
	&&\quad+\int_\Gamma\left[C_{1}\gamma_{k_{1}}(x,y)-C_{2}\gamma_{k_{2}}(x,y)\right] \nu_{x}\nu_{y}^{\top}u(y)ds_{y},\\
	&&N^2_1(u)(x)=\int_{\Gamma}\left[4\mu^2E_{11}(x,y)-3\mu\gamma_{k_s}(x,y)I\right]u(y)ds_y,\\
	&&N^3_1(u)(x)=\mu\int_\Gamma \gamma_{k_s}(x,y)u(y)ds_y,
\enn
\ben
	&&N^4_1(u)(x)=\mu\int_\Gamma\partial_{\nu_y}\gamma_{k_s}(x,y) u(y)ds_y\\
	&&\quad -\int_\Gamma \nabla _y\left[2\mu(\gamma_{k_s}(x,y)-\gamma_{k_1}(x,y) )-C_3(\gamma_{k_1}(x,y)-\gamma_{k_2}(x,y) )\right] \nu^\top_y u(y)ds_y,\\
	&&N^5_1(u)(x)=\mu\int_\Gamma\partial_{\nu_x}\gamma_{k_s}(x,y) u(y)ds_y\\
	&&\quad -\int_\Gamma \nu_x\nabla _x^\top \left[2\mu(\gamma_{k_s}(x,y)-\gamma_{k_1}(x,y) )-C_4(\gamma_{k_1}(x,y)-\gamma_{k_2}(x,y) )\right]u(y)ds_y,
\enn
	with the constants $C_{i}$, $i=1,2,3,4$ being given by
	\ben
	C_1 = \frac{\beta k_1^2(k_1^2-q)(\lambda+2\mu)-i\omega \alpha \gamma\beta k_1^2-\rho_f\omega^2\alpha(\alpha-\beta)k_1^2-\rho_f\omega^2\alpha^2(k_p^2-k_1^2)}{ \beta(k_1^2 - k_2^2)}, \\
	C_2 = \frac{\beta k_2^2(k_2^2-q)(\lambda+2\mu)-i\omega \alpha \gamma\beta k_2^2-\rho_f\omega^2\alpha(\alpha-\beta)k_2^2-\rho_f\omega^2\alpha^2(k_p^2-k_2^2)}{ \beta(k_1^2 - k_2^2)}, \\
	C_3=\frac{2\mu }{k_1^2 - k_2^2}(k_2^2-q-\frac{\rho_f\omega^2\alpha(\alpha-\beta)}{\beta(\lambda+2\mu)}),\quad C_4 = \frac{2\mu }{k_1^2 - k_2^2}(k_2^2 - q-\frac{i\omega \gamma \alpha }{\lambda  + 2\mu } ),
	\enn
	
\end{theorem}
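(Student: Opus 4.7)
The plan is to transplant the Günter-derivative regularization strategy developed for elastic and thermoelastic hyper-singular operators in \cite{BXY191,BY20} and extended to the two-dimensional poroelastic case in \cite{ZXY21}, carrying it out block-wise on the three-dimensional poroelastic kernel. I would first unfold the structure of $N_1$: by reading off the $(1,1)$-block of $\widetilde T(\partial_x,\nu_x)(\widetilde T^*(\partial_y,\nu_y)E(x,y))^\top$ and using the block forms of $\widetilde T$, $\widetilde T^*$ and $E$, the kernel of $N_1$ splits into four pieces involving $T(\partial_x,\nu_x)(T(\partial_y,\nu_y)E_{11})^\top$, a correction coming from the off-diagonal entry $-\tfrac{\rho_f\omega^2\alpha}{\beta}\nu_y$ of $\widetilde T^*$, a piece coming from the off-diagonal entry $-\alpha\nu_x$ of $\widetilde T$, and a purely scalar piece involving $\nu_x\nu_y^\top E_{22}$. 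Each of these depends only on derivatives of the scalar Helmholtz kernels $\gamma_{k_s}$, $\gamma_{k_1}$, $\gamma_{k_2}$.

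The core manipulation is the Günter decomposition
\[
T(\partial,\nu)\,u = (\lambda+2\mu)\nu(\nabla\cdot u) + \mu\,\partial_\nu u + \mu\,M(\partial,\nu)u,
\]
applied both at $x$ and at $y$. The only contribution that remains formally hyper-singular after all derivatives are evaluated is the cross product $\mu M(\partial_x,\nu_x)\cdot\mu M(\partial_y,\nu_y)$; by the standard surface Stokes formula for the Günter operator it can be pulled outside the integral to produce the sandwich term $M(\partial,\nu)N_1^2M(\partial,\nu)$, where the particular combination $4\mu^2E_{11}-3\mu\gamma_{k_s}I$ defining $N_1^2$ is chosen precisely to cancel the strongly-singular $\nabla_x\nabla_x^\top$ part of $E_{11}$ and to leave only a weakly-singular kernel. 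The remaining $T_xT_y^*$ pieces contain at most one normal derivative on one side; these I would reduce by repeatedly using the scalar Helmholtz identities $(\Delta+k_t^2)\gamma_{k_t}=0$ for $t=s,1,2$ to trade second-order derivatives for $k_t^2\gamma_{k_t}$ terms, and by the antisymmetry of $M$, which allows one $M$ to be moved onto the density $u$ via integration by parts on $\Gamma$. The $(\lambda+2\mu)\nu(\nabla\cdot)$ contributions split according to $\nabla=\nabla^S+\nu\partial_\nu$ and, after one further use of the Helmholtz identity, regroup into the tangential sandwich $\tau_2N_1^3\tau_1$ (with kernel $\mu\gamma_{k_s}$), while the surviving weakly-singular kernels collect into $N_1^1$, $M(\partial,\nu)N_1^4$ and $N_1^5M(\partial,\nu)$.

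The main obstacle, and the origin of the specific constants $C_1,\ldots,C_4$, is the bookkeeping of cross-couplings between the four blocks of $E$, the off-diagonal entries of $\widetilde T$ and $\widetilde T^*$, and the four wave numbers $k_p,k_s,k_1,k_2$. Once all contributions to a term of the form $\gamma_{k_j}\nu_x\nu_y^\top$ have been gathered, the resulting coefficient must be collapsed using the dispersion relations $k_1^2+k_2^2=q(1+\epsilon)+k_p^2$, $k_1^2k_2^2=qk_p^2$ with $\epsilon=i\omega\gamma(\alpha-\beta)/[q(\lambda+2\mu)]$, together with the identity $\gamma=-i\omega\rho_f(\alpha-\beta)/\beta$; only after this algebra does the coefficient reduce to the compact form of $C_1$ (with $C_2$ obtained by the symmetry $k_1\leftrightarrow k_2$), and analogously for $C_3$ and $C_4$. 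This computation is lengthy but mechanical, and as the authors indicate it is deferred to the supplementary material.
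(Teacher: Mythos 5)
Your overall strategy coincides with the paper's: split $N_1$ into the four block contributions coming from $\widetilde T$, $\widetilde T^*$ and the blocks of $E$, write the traction operator in G\"{u}nter form at both $x$ and $y$, move the G\"{u}nter derivatives off the kernel and onto the density via the surface Stokes formulas, reduce Laplacians of the Helmholtz kernels through $(\Delta+k_t^2)\gamma_{k_t}=0$, and collapse the resulting coefficients with the dispersion relations to reach $C_1,\dots,C_4$. Two smaller slips first: the G\"{u}nter split of $T$ acting on a general field is $T(\partial,\nu)u=(\lambda+\mu)\nu(\nabla\cdot u)+\mu\partial_\nu u+\mu M(\partial,\nu)u$, not $(\lambda+2\mu)$ --- the coefficient $(\lambda+2\mu)$ only appears in the derived identity $T(\partial,\nu)\nabla=(\lambda+2\mu)\nu\Delta+2\mu M(\partial,\nu)\nabla$ valid for gradient fields; and $E_{11}$ has no strongly singular $\nabla_x\nabla_x^\top$ part for $4\mu^2E_{11}-3\mu\gamma_{k_s}I$ to cancel, since the coefficients of $\gamma_{k_s},\gamma_{k_1},\gamma_{k_2}$ inside that gradient term already sum to zero; the combination $4\mu^2E_{11}-3\mu\gamma_{k_s}I$ is weakly singular term by term and simply falls out of the computation of $g_4$.

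The genuine gap is in your accounting of what remains hyper-singular. You assert that after the G\"{u}nter decomposition the only formally hyper-singular piece is the cross product $\mu M(\partial_x,\nu_x)\cdots\mu M(\partial_y,\nu_y)$. That is false: the cross term $\mu\partial_{\nu_x}\cdot\mu\partial_{\nu_y}$ acting on the $\tfrac{1}{\mu}\gamma_{k_s}I$ part of $E_{11}$ produces $\mu\int_\Gamma\partial_{\nu_x}\partial_{\nu_y}\gamma_{k_s}(x,y)u(y)\,ds_y$, which is the classical acoustic hyper-singular operator (kernel of order $|x-y|^{-3}$) and is untouched by the Stokes formulas for $M$. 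The paper disposes of it with the Maue-type identity
\[
\int_\Gamma\partial_{\nu_x}\partial_{\nu_y}\gamma_{k_s}(x,y)u(y)\,ds_y=\int_\Gamma\bigl(\nu_x\times\nabla_x\gamma_{k_s}(x,y)\bigr)\cdot\bigl(\nu_y\times\nabla_y u(y)\bigr)\,ds_y+k_s^2\int_\Gamma\gamma_{k_s}(x,y)\,\nu_x^\top\nu_y\,u(y)\,ds_y,
\]
and it is exactly this identity that generates $\tau_2N_1^3\tau_1$ with kernel $\mu\gamma_{k_s}$, its $k_s^2$ remainder merging via $\mu k_s^2=(\rho-\beta\rho_f)\omega^2$ into $N_1^1$. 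Your proposal instead attributes $\tau_2N_1^3\tau_1$ to the $\nu(\nabla\cdot)$ contributions split according to $\nabla=\nabla^S+\nu\partial_\nu$; but those contributions, acting on gradients of Helmholtz kernels, reduce directly through $\nu\Delta\gamma_{k_t}=-k_t^2\nu\gamma_{k_t}$ to the weakly singular $\nu_x\nu_y^\top$ terms that feed the constants $C_1,C_2$, and they cannot produce a $\tau_2(\cdot)\tau_1$ sandwich. As written, your derivation leaves a hyper-singular term unregularized and has no mechanism to produce $N_1^3$.
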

and where, for a scalar field $p$ and a vector field $v$, the operators $\tau_1$ and $\tau_2$ are defined by
\ben
\tau_1v=\nu\times \nabla^Sp, \quad \tau_2V=(\nu\times\nabla^S)\cdot v.
\enn

\begin{theorem}
	\label{regN2}
	The hyper-singular operator $N_2$ can be expressed as
	\ben
	\label{N2}
	N_2=N^1_2+M(\partial,\nu)N^2_2M(\partial,\nu)+M(\partial,\nu)N^3_2,
	\enn
	where
	\ben
	&&N^1_2(p)(x)=\frac{\alpha-\beta }{k_1^2 - k_2^2}\int_\Gamma  \partial _{\nu _y}\left[(k_1^2\gamma _{k_1}(x,y) - k_2^2\gamma _{k_2}(x,y))\right] \nu _xp(y)ds_y\\
	&&\quad +\beta \int_\Gamma  \left[\nu _x\nabla _x^\top(\gamma _{k_s}(x,y) - \gamma _{k_1}(x,y)) - \partial _{\nu _x}\gamma _{k_s}(x,y)I\right]\nu _yp(y) ds_y\\
	&&\quad+(\frac{i\omega\gamma\alpha\beta}{(k_1^2-k_2^2)(\lambda+2\mu)}-\frac{\beta(k_2^2-q)}{k_1^2-k_2^2})\int_\Gamma \nu _x\nabla _x^\top(\gamma _{k_1}(x,y) - \gamma _{k_2}(x,y))\nu _yp(y) ds_y\\
	&&\quad+\frac{\alpha }{k_1^2 - k_2^2}\int_\Gamma  \partial _{\nu _y}\left[(k_p^2 - k_1^2)\gamma _{k_1}(x,y)-(k_p^2 - k_2^2)\gamma _{k_2}(x,y) \right]\nu _xp(y)ds_y,\\
	&&N^2_2(p)(x)=\frac{2\mu(\alpha-\beta)}{(\lambda+2\mu)(k^2_1-k^2_2)}\int_\Gamma \left\{ \nabla _y^\top\left[ \gamma_{k_1}(x,y) - \gamma _{k_2}(x,y) \right]p(y) \right\}^\top ds_y,\\
	&&N^3_2(p)(x)=- \beta \int_\Gamma \left[2\mu E_{11}(x,y)- \gamma _{k_s}(x,y)I)\right]\nu _yp(y)ds_y\\
	&&\quad-\frac{2\mu(\alpha-\beta)}{(\lambda+2\mu)(k^2_1-k^2_2)}\int_\Gamma \left[k_1^2\gamma_{k_1}(x,y)-k_2^2\gamma_{k_2}(x,y)\right]\nu_y p(y)ds_y .	
\enn
\end{theorem}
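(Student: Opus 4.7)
The plan is to start from $N_2 p$ defined as the vector-valued block of $\widetilde T(\partial_x,\nu_x)\mathcal D((0,p)^\top)(x)$. Isolating the relevant column of $(\widetilde T^*(\partial_y,\nu_y) E(x,y))^\top$ gives
\[ \mathcal D((0,p)^\top) = \int_\Gamma \bigl[T(\partial_y,\nu_y)E_{12} - \tfrac{\rho_f\omega^2\alpha}{\beta}\nu_y E_{22};\ -\beta\nu_y^\top E_{12}+\partial_{\nu_y}E_{22}\bigr] p(y)\,ds_y, \]
on which the top row $[T(\partial_x,\nu_x),\,-\alpha\nu_x]$ of $\widetilde T(\partial_x,\nu_x)$ then acts. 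Substituting the explicit forms of $E_{12}$ and $E_{22}$ expresses $N_2 p$ entirely in terms of the scalar kernels $\gamma_{k_1},\gamma_{k_2}$ differentiated by various combinations of $\nabla_x,\nabla_y,\nu_x,\nu_y$, while the associated $\gamma_{k_s}$ and $E_{11}$ contributions will reappear later through Helmholtz identities and the Green-matrix relations $LE=L^*E^\top=\delta$.

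The regularization itself proceeds by three standard tools, applied on both the $x$-side and the $y$-side (analogously to the derivations sketched for Theorems~\ref{regK}--\ref{regKP}, but now with double the derivative count). First, I would split $T(\partial,\nu) = (\lambda+2\mu)\nu(\nabla\cdot) + \mu\partial_\nu + \mu M(\partial,\nu)$, so that the antisymmetric tangential component is immediately pulled off as the outer $M(\partial_x,\nu_x)$. Second, I would invoke $(\Delta+k_t^2)\gamma_{k_t}=0$ off-diagonal to convert $\nabla\nabla^\top\gamma_{k_t}$ into $-k_t^2\gamma_{k_t}I$ plus a tangential remainder that is absorbed by another $M$. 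Third, I would integrate by parts on the closed surface $\Gamma$ with the surface gradient $\nabla^S$ (Stokes/G\"unter) to move $y$-side normal/tangential derivatives off the kernel. Terms whose strong singularities are tangential on both sides assemble into the sandwich $M(\partial_x,\nu_x)N_2^2 M(\partial_y,\nu_y)$; those tangential only in $x$ form $M(\partial_x,\nu_x)N_2^3$; and the remainders collect into the weakly-singular $N_2^1$.

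The stated constants, notably the coefficient $\frac{i\omega\gamma\alpha\beta}{(k_1^2-k_2^2)(\lambda+2\mu)}-\frac{\beta(k_2^2-q)}{k_1^2-k_2^2}$ and the pair $(k_p^2-k_1^2)/(k_1^2-k_2^2),\ (k_p^2-k_2^2)/(k_1^2-k_2^2)$ multiplying the $\partial_{\nu_y}\gamma_{k_j}$ pieces of $N_2^1$, should then emerge by collecting like coefficients of $\gamma_{k_1},\gamma_{k_2},\gamma_{k_s}$ and repeatedly applying the wave-number relations $k_1^2+k_2^2=q(1+\varepsilon)+k_p^2$, $k_1^2 k_2^2 = qk_p^2$, together with $\varepsilon=\frac{i\omega\gamma(\alpha-\beta)}{q(\lambda+2\mu)}$. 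In particular, the weakly-singular kernel $2\mu E_{11}-\gamma_{k_s}I$ inside $N_2^3$ is recognizable as the combination whose strongly-singular leading part cancels, and it is produced by inserting the Lam\'e-Helmholtz decomposition of $E_{11}$ together with the off-diagonal Helmholtz equations.

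The main obstacle will be the algebraic bookkeeping: $\widetilde T(\partial_x,\nu_x)\widetilde T^*(\partial_y,\nu_y)$ generates a large number of cross-terms involving the four wave numbers $k_s,k_p,k_1,k_2$, and many of these cancel only after the combined use of the Helmholtz equations and the identities $LE=L^*E^\top=\delta$ governing the poroelastic fundamental matrix. A secondary subtlety is the interpretation of the rightmost $M(\partial,\nu)$ in $M(\partial_x,\nu_x)N_2^2 M(\partial_y,\nu_y)\,p$ when $p$ is scalar: it should be read as the formal trace of an integration-by-parts step that has transferred the surface derivative from the density $p$ onto the kernel $\gamma_{k_1}-\gamma_{k_2}$ of $N_2^2$, consistent with the explicit $\nabla_y^\top[\gamma_{k_1}-\gamma_{k_2}]\,p(y)$ appearing in the stated formula. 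Matching this bookkeeping to each of the three stated pieces $N_2^1,N_2^2,N_2^3$ is the final verification.
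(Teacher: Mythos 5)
Your toolkit (splitting $T(\partial,\nu)$ via the G\"unter operator, the identities $\partial_\nu\nabla-M(\partial,\nu)\nabla=\nu\Delta$ and $T(\partial,\nu)\nabla=(\lambda+2\mu)\nu\Delta+2\mu M(\partial,\nu)\nabla$, the Helmholtz equations, and the Stokes formulas) is exactly what the paper uses, but your starting point misidentifies the kernel block on which $\widetilde T(\partial_x,\nu_x)$ acts. The operator $N_2$ is the first row $[T(\partial_x,\nu_x),\,-\alpha\nu_x]$ of $\widetilde T(\partial_x,\nu_x)$ applied to the $p$-column of $(\widetilde T^*(\partial_y,\nu_y)E(x,y))^\top$, whose vector part is $-\beta E_{11}(x,y)\nu_y+\partial_{\nu_y}E_{21}(x,y)$ (the transpose of the $(2,1)$ block $-\beta\nu_y^\top E_{11}+\partial_{\nu_y}E_{21}^\top$ of $\widetilde T^*_yE$, exactly the kernel of $K_2$) and whose scalar part is $-\beta\nu_y^\top E_{12}+\partial_{\nu_y}E_{22}$. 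The vector block you wrote, $T(\partial_y,\nu_y)E_{12}-\tfrac{\rho_f\omega^2\alpha}{\beta}\nu_yE_{22}$, is the $(1,2)$ block of $\widetilde T^*_yE$, which after transposition multiplies $u$ and feeds $N_3$, not $N_2$. The correct starting expression is
\[
N_2(p)(x)=-\int_\Gamma\Bigl[\beta T(\partial_x,\nu_x)E_{11}\nu_y-\partial_{\nu_y}T(\partial_x,\nu_x)E_{21}\Bigr]p\,ds_y+\int_\Gamma\Bigl[\alpha\beta\nu_xE_{12}^\top\nu_y-\alpha\,\partial_{\nu_y}E_{22}\,\nu_x\Bigr]p\,ds_y.
\]

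This is not a cosmetic slip: it invalidates your claim that $N_2p$ is expressed ``entirely in terms of $\gamma_{k_1},\gamma_{k_2}$'' with the $\gamma_{k_s}$ and $E_{11}$ contributions recovered afterwards from Helmholtz or Green-matrix identities. The kernels $E_{12}$ and $E_{22}$ are built solely from $\gamma_{k_1}$ and $\gamma_{k_2}$, and no identity manufactures $\gamma_{k_s}$ or the Lam\'e-type kernel $E_{11}$ out of them; yet the target formulas visibly contain $\beta\int_\Gamma[\nu_x\nabla_x^\top(\gamma_{k_s}-\gamma_{k_1})-\partial_{\nu_x}\gamma_{k_s}I]\nu_yp\,ds_y$ in $N_2^1$ and $-\beta\int_\Gamma[2\mu E_{11}-\gamma_{k_s}I]\nu_yp\,ds_y$ in $N_2^3$. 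In the paper these arise directly by applying the regularization identity (\ref{Tx}) for $T(\partial_x,\nu_x)E_{11}$ to the term $-\beta T(\partial_x,\nu_x)E_{11}\nu_y$, while the sandwich $M(\partial_x,\nu_x)N_2^2M(\partial_y,\nu_y)$ and the remaining pieces of $N_2^1$, $N_2^3$ come from $\partial_{\nu_y}T(\partial_x,\nu_x)E_{21}$ via (\ref{eq.G1})--(\ref{eq.G2}) followed by the Stokes formula that moves $M(\partial_y,\nu_y)$ off the kernel and onto the density $p$ (note the surface derivative ends up on $p$ in the composed operator, the opposite direction from the one you describe). Once the block identification is corrected, the rest of your plan does follow the paper's route.
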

\begin{theorem}
	\label{regN3}
	The hyper-singular operator $N_3$ can be expressed as
	\ben
	N_3=N^1_3+N^2_3M(\partial,\nu)+M(\partial,\nu):N^3_3M(\partial,\nu),
	\label{N3}
	\enn
	where
	\ben
	&& N^1_3(u)(x)=\frac{i\omega\gamma}{k_1^2-k_2^2}\int_\Gamma \partial_{\nu_x}\left[k_1^2\gamma _{k_1}(x,y) - k_2^2\gamma _{k_2}(x,y)\right]\nu_y^\top u(y)ds_y\\
	&&\quad-\rho_f\omega^2 \int_\Gamma \left[\partial_{\nu_x}\left(\gamma _{k_s}(x,y) - \gamma _{k_1}(x,y)\right)\nu_y^\top+\partial_{\nu _y}\gamma_{k_s}(x,y)\nu_x^\top \right]u(y)ds_y\\
	&&\quad+(\frac{\rho_f\omega^2(k_2^2-q)}{k_1^2-k_2^2}-\frac{\rho_f^2\omega^4\alpha(\alpha-\beta)}{\beta(\lambda+2\mu)(k_1^2-k_2^2)})\int_\Gamma \partial_{\nu_x}\left[\gamma _{k_1}(x,y) - \gamma _{k_2}(x,y)\right]\nu_y^\top u(y)ds_y\\
	&&\quad +\frac{\rho_f\omega^2\alpha}{\beta(k_1^2-k_2^2)}\int_\Gamma \partial_{\nu_x}\left[(k_p^2-k_1^2)\gamma _{k_1}(x,y) - (k_p^2-k_2^2)\gamma _{k_2}(x,y)\right]\nu_y^\top u(y)ds_y,\\
	&&N^2_3(u)(x)=-\rho_f\omega^2\int_\Gamma  \nu_x^\top\left[2\mu E_{11}(x,y) - \gamma _{k_s}(x,y)I\right]u(y) ds_y\\
	&&\quad - \frac{2i\mu \omega \gamma }{(\lambda  + 2\mu )(k_1^2 - k_2^2)}\int_\Gamma  \left[k_1^2\gamma _{k_1}(x,y) - k_2^2\gamma _{k_2}(x,y)\right]\nu _x^\top u(y) d{s_y},\\
	&&N^3_3(u)(x)=\frac{2i\mu \omega \gamma }{(\lambda+2\mu )(k_1^2 - k_2^2)} \int_\Gamma  u(y)\nabla_x^\top\left[\gamma_{k_1}(x,y)-\gamma_{k_2}(x,y)\right] ds_y.
\enn
\end{theorem}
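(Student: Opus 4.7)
The plan is to follow the same blueprint employed in Theorems \ref{regK}--\ref{regN2}: start from the raw definition of $N_3$, expand the relevant block of $\widetilde T(\partial_x,\nu_x)\widetilde T^*(\partial_y,\nu_y)E(x,y)$ in terms of the scalar Helmholtz fundamental solutions $\gamma_{k_s},\gamma_{k_1},\gamma_{k_2}$, and then use the G\"{u}nter decomposition
\[
T(\partial,\nu) = (\lambda+2\mu)\nu(\nabla\cdot) + \mu\partial_\nu + \mu M(\partial,\nu)
\]
together with Stokes-type surface integration-by-parts formulas to relocate every derivative away from the most singular kernels and onto the tangential G\"{u}nter operator $M(\partial,\nu)$ acting on the density $u$.

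First I would write
\[
N_3(u)(x) = -\rho_f\omega^2\nu_x^\top\!\!\int_\Gamma H_{11}(x,y)u(y)ds_y + \partial_{\nu_x}\!\!\int_\Gamma H_{21}(x,y)u(y)ds_y,
\]
where $H_{11}$ and $H_{21}$ are the first-block-column entries of $(\widetilde T^*(\partial_y,\nu_y)E(x,y))^\top$. These are read off directly from the definitions of $\widetilde T^*$ and of the blocks $E_{11}, E_{21}$, and each reduces to a finite sum of first and second derivatives of $\gamma_{k_s}, \gamma_{k_1}, \gamma_{k_2}$ weighted by the poroelastic constants. The term containing $T(\partial_y,\nu_y)E_{11}$ supplies the elastic strongly-singular contribution; inserting the G\"{u}nter splitting of $T$, the $\mu\partial_{\nu_y}$ and $(\lambda+2\mu)\nu_y(\nabla_y\cdot)$ pieces combine with the Helmholtz identities $(\Delta+k_t^2)\gamma_{k_t}=-\delta$ to yield a weakly-singular kernel, while the $\mu M(\partial_y,\nu_y)$ piece, after transposing by surface integration by parts, produces the single $M(\partial,\nu)$ acting on $u$ that is visible in the $N^2_3 M(\partial,\nu)$ summand.

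The heart of the argument is the treatment of the $\partial_{\nu_x}\partial_{\nu_y}$-type couplings that carry the intrinsic hyper-singularity. For these I would invoke the Maue-type identity
\[
\partial_{\nu_x}\partial_{\nu_y}\gamma_{k_t}(x,y) = -k_t^2\,\gamma_{k_t}(x,y)\,(\nu_x\cdot\nu_y) - (\nu_x\times\nabla_x)\cdot(\nu_y\times\nabla_y)\gamma_{k_t}(x,y),
\]
which converts each hyper-singular contribution into a weakly-singular $\gamma_{k_t}$ sandwiched between two tangential G\"{u}nter derivatives, yielding precisely the structure $M(\partial,\nu):N^3_3 M(\partial,\nu)$ in the statement. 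The residual $-k_t^2\gamma_{k_t}(\nu_x\cdot\nu_y)$ terms, together with all non-hyper-singular pieces and the contributions coming through the $-\tfrac{\rho_f\omega^2\alpha}{\beta}\nu_y E_{21}^\top$ and $-\beta\nu_y^\top E_{11}$ entries of $\widetilde T^*$, are then collected into $N^1_3$ after simplifying partial fractions of the form $(k_p^2-k_j^2)/(k_1^2-k_2^2)$ with the poroelastic dispersion relations $k_1^2+k_2^2 = q(1+\varepsilon)+k_p^2$ and $k_1^2k_2^2 = qk_p^2$.

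The hardest step is this final bookkeeping: one must verify that after every Maue reformulation and every application of the Helmholtz equation, the coefficients of $\gamma_{k_1}$ and $\gamma_{k_2}$ in the $\nu_x^\top(\cdots)\nu_y^\top$-type contributions match exactly the combinations of $\rho_f\omega^2, \alpha, \beta, \gamma, \lambda, \mu$ hidden inside the five integrals defining $N^1_3$. This is a purely algebraic but delicate verification that is specific to the coupled poroelastic setting, because the adjoint operator $\widetilde T^*$ differs from $\widetilde T$ in a genuinely non-symmetric way (through the $-\rho_f\omega^2\alpha/\beta$ versus $-\alpha$ and $-\beta$ versus $-\rho_f\omega^2$ entries); consequently, unlike the elastic or thermoelastic analogues in \cite{BXY191,BY20}, one cannot short-circuit the calculation by quoting an existing template and must track every cross-coupling term explicitly.
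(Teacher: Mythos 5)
Your overall blueprint (expand the $(2,1)$ block of $\widetilde T(\partial_x,\nu_x)\mathcal{D}$, apply the G\"unter splitting of $T$, move derivatives onto the density via the Stokes formulas, and kill Laplacians with the Helmholtz equation) is the paper's strategy, and your treatment of the $T(\partial_y,\nu_y)E_{11}$ contribution is essentially what the paper does via its identity for $\int_\Gamma (T(\partial_y,\nu_y)E_{11})^\top u\,ds_y$. However, the step you single out as the heart of the argument is the wrong mechanism for $N_3$. The Maue-type identity for $\partial_{\nu_x}\partial_{\nu_y}\gamma_{k_t}$ produces the scalar surface-curl structure $\tau_2(\cdot)\tau_1$ with $\tau_1 = \nu\times\nabla^S$, $\tau_2 = (\nu\times\nabla^S)\cdot$; that is precisely what generates $N_1^3$ and $N_4^2$ in the neighbouring theorems, but no such term appears in $N_3$, and it cannot yield the Frobenius-type contraction $M(\partial,\nu):N_3^3M(\partial,\nu)$ asserted here. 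Moreover, after the $y$-side regularization there is no surviving $\partial_{\nu_x}\partial_{\nu_y}\gamma$ coupling in $N_3$ at all: since $E_{12}$ is proportional to $\nabla_x[\gamma_{k_1}-\gamma_{k_2}]$, the first (already established, from the $K_3$ computation) reduction $T(\partial_y,\nu_y)\nabla_y = (\lambda+2\mu)\nu_y\Delta_y + 2\mu M(\partial_y,\nu_y)\nabla_y$ leaves a kernel of the form $\nabla_x^\top[\gamma_{k_1}-\gamma_{k_2}]$ paired with $M(\partial_y,\nu_y)u$. The double-$M$ term then arises from applying $\partial_{\nu_x}$ to \emph{that} gradient via $\partial_{\nu_x}\nabla_x = \nu_x\Delta_x + M(\partial_x,\nu_x)\nabla_x$, followed by the purely algebraic rewriting $\{M(\partial_x,\nu_x)\nabla_x R\}^\top v = M(\partial_x,\nu_x):\left(v\,\nabla_x^\top R\right)$, which is exactly how $N_3^3$ acquires the outer-product kernel $u(y)\nabla_x^\top[\gamma_{k_1}-\gamma_{k_2}]$. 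As written, your plan would either stall or land on a $\tau_2(\cdot)\tau_1$ expression that does not match the statement.

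Two smaller inaccuracies reinforce that the singular structure has not been fully traced. First, your pointwise form of the Maue identity carries the wrong sign on the $k_t^2$ term relative to the version valid here (the paper's identity, used for $N_1$ and $N_4$, reads $\int_\Gamma\partial_{\nu_x}\partial_{\nu_y}\gamma_{k_s}u\,ds_y = \int_\Gamma(\nu_x\times\nabla_x\gamma_{k_s})\cdot(\nu_y\times\nabla_y u)\,ds_y + k_s^2\int_\Gamma\gamma_{k_s}\nu_x^\top\nu_y u\,ds_y$, and only in the integrated sense). Second, the $-\beta\nu_y^\top$ entry of $\widetilde T^*$ that you invoke feeds the second block-column of $(\widetilde T^*E)^\top$ and hence contributes to $N_2$ and $N_4$, not to $N_3$; the off-diagonal contributions relevant to $N_3$ come only from $-\frac{\rho_f\omega^2\alpha}{\beta}E_{21}\nu_y^\top$ and $-\frac{\rho_f\omega^2\alpha}{\beta}E_{22}\nu_y^\top$, both of which are already weakly singular and go directly into $N_3^1$.
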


\begin{theorem}
	\label{regN4}
	The hyper-singular operator $N_4$ can be expressed as
	\ben
	N_4=N^1_4+\tau_2N^2_4\tau_1,
	\label{N4}
	\enn
	where
	\ben
	&&N^1_4(p)(x) = \rho_f\omega^2\beta\int_\Gamma  \nu _x^\top E_{11}(x,y)\nu_yp(y) ds_y\\
	&&\quad+\frac{i\omega\gamma\beta+\rho_f\omega^2(\alpha-\beta)}{(\lambda+2\mu)(k^2_1-k^2_2)}\int_\Gamma  \left[k^2_1\gamma _{k_1}(x,y) - k^2_2\gamma _{k_2}(x,y)\right] \nu_x^\top\nu_y p(y)ds_y\\
    &&\quad -\frac{1}{k^2_1-k^2_2}\int_\Gamma  \left[(k^2_p-k^2_1)k^2_1\gamma_{k_1}(x,y)-(k^2_p-k^2_2)k^2_2\gamma_{k_2}(x,y)\right]\nu_x^\top\nu_y p(y)ds_y,\\
    &&N^2_4(p)(x)=\frac{\rho_f\omega^2(\alpha-\beta)+i\omega\beta\gamma}{(\lambda+2\mu)(k_1^2-k_2^2)} \int_\Gamma\left[\gamma_{k_1}(x,y)-\gamma_{k_2}(x,y)\right]p(y)ds_y \\
    &&\quad -\frac{1}{k^2_1-k^2_2}\int_\Gamma \left[(k^2_p-k^2_1)\gamma_{k_1}(x,y)-(k^2_p-k^2_2)\gamma_{k_2}(x,y)\right]p(y)ds_y.
    \enn
\end{theorem}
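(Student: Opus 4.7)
The plan is to compute $N_4 p(x)$ directly from $N = \widetilde T(\partial_x,\nu_x)\mathcal{D}$, identify the hyper-singular contributions, and regularize each via a Maue-type surface integration-by-parts identity, leaving a weakly-singular remainder plus a $\tau_2(\cdot)\tau_1$ composition. First I would extract the scalar component by reading off the fourth row of $\widetilde T$ and the fourth column of $(\widetilde T^{*}_y E)^\top$:
\begin{equation*}
N_4 p(x) = -\rho_f\omega^2 \nu_x^\top\!\int_\Gamma\bigl[-\beta E_{11}^\top\nu_y + \partial_{\nu_y} E_{21}\bigr]p\,ds_y + \partial_{\nu_x}\!\int_\Gamma\bigl[-\beta \nu_y^\top E_{12} + \partial_{\nu_y} E_{22}\bigr]p\,ds_y.
\end{equation*}
After substituting the closed-form $E_{ij}$ and using $\nabla_x\gamma_k(x,y)=-\nabla_y\gamma_k(x,y)$, three of the four resulting pieces (those arising from $\partial_{\nu_y} E_{22}$, $-\beta\nu_y^\top E_{12}$, and $-\rho_f\omega^2\nu_x^\top\partial_{\nu_y} E_{21}$) collapse into integrals of $\partial_{\nu_x}\partial_{\nu_y}\gamma_{k_j}$ ($j=1,2$) against $p$, which are hyper-singular. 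The fourth piece, $\rho_f\omega^2\beta\,\nu_x^\top E_{11}\nu_y$, is already weakly singular on a smooth closed $\Gamma$ thanks to the cancellation $(\nu_x\cdot(x-y))(\nu_y\cdot(x-y)) = O(|x-y|^2)$, which tames the $\nabla_x\nabla_x^\top$ portion of $E_{11}$; I would place this term unchanged into the first term of $N_4^1$.

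Next I would apply the Maue-type identity
\begin{equation*}
\int_\Gamma \partial_{\nu_x}\partial_{\nu_y}\gamma_k(x,y)\,p(y)\,ds_y = k^2\!\int_\Gamma (\nu_x\cdot\nu_y)\gamma_k(x,y)\,p(y)\,ds_y + \tau_2\!\int_\Gamma \gamma_k(x,y)\,\tau_1 p(y)\,ds_y,
\end{equation*}
valid on any smooth closed $\Gamma$. Its derivation uses the decomposition $\nabla_y = \nu_y\partial_{\nu_y} + \nabla^S_y$, the Helmholtz PDE $\Delta_y\gamma_k = -k^2\gamma_k$ off the diagonal (which produces the $k^2(\nu_x\cdot\nu_y)\gamma_k$ term from the purely-normal part of the second derivative), and the surface Stokes formula, which transfers the $\nu_y\times\nabla^S_y$ differentiation from $\gamma_k$ onto $p$ with no boundary contribution because $\Gamma$ is closed. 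Applying this identity to each of the three hyper-singular pieces yields one $k_j^2(\nu_x\cdot\nu_y)\gamma_{k_j}$-type contribution to be deposited into $N_4^1$ and one $\tau_2[\gamma_{k_j}\tau_1(\cdot)]$-type contribution to be deposited into $N_4^2$.

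What remains is bookkeeping of the scalar prefactors. The three hyper-singular pieces carry constants $-(k_p^2-k_j^2)/(k_1^2-k_2^2)$ (from $\partial_{\nu_y} E_{22}$), $i\omega\gamma\beta/[(\lambda+2\mu)(k_1^2-k_2^2)]$ (from $-\beta\nu_y^\top E_{12}$ after $\nabla_x=-\nabla_y$), and $\rho_f\omega^2(\alpha-\beta)/[(\lambda+2\mu)(k_1^2-k_2^2)]$ (from $-\rho_f\omega^2\nu_x^\top\partial_{\nu_y} E_{21}$). Adding the second and third and multiplying by the Maue factor $k_j^2$ produces the $(k_1^2\gamma_{k_1}-k_2^2\gamma_{k_2})\,\nu_x\!\cdot\!\nu_y$ coefficient of the middle term of $N_4^1$ and the $(\gamma_{k_1}-\gamma_{k_2})$ coefficient of the first term of $N_4^2$; the first prefactor produces the $(k_p^2-k_j^2)k_j^2\gamma_{k_j}$ term of $N_4^1$ and the $(k_p^2-k_j^2)\gamma_{k_j}$ term of $N_4^2$. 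The main obstacle will be less conceptual than organizational: propagating signs correctly through repeated use of $\nabla_x=-\nabla_y$, preserving the scalar/vector block structure of the $E_{ij}$ when restoring them from the fundamental-solution formulas, and justifying the Maue step in the Hadamard finite-part sense so that $N_4^1$ and $N_4^2$ emerge as the genuinely weakly-singular operators stated in the theorem.
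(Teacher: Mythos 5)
Your proposal is correct and follows essentially the same route as the paper's appendix proof: the same extraction of the scalar block of $\widetilde T(\partial_x,\nu_x)\mathcal{D}$, the same Maue-type identity $\int_\Gamma \partial_{\nu_x}\partial_{\nu_y}\gamma_k\,p\,ds_y = k^2\int_\Gamma (\nu_x\cdot\nu_y)\gamma_k\,p\,ds_y+\tau_2\int_\Gamma\gamma_k\,\tau_1p\,ds_y$ applied to the three pieces coming from $\partial_{\nu_y}E_{22}$, $-\beta\nu_y^\top E_{12}$ and $-\rho_f\omega^2\nu_x^\top\partial_{\nu_y}E_{21}$, and the same prefactor bookkeeping. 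The only minor imprecision is your justification for keeping $\rho_f\omega^2\beta\,\nu_x^\top E_{11}\nu_y$ untouched: no normal-vector cancellation is needed there, since $E_{11}$ is itself only $O(|x-y|^{-1})$ (the $1/|x-y|$ contributions inside the $\nabla_x\nabla_x^\top$ bracket cancel because the coefficients of $\gamma_{k_s},\gamma_{k_1},\gamma_{k_2}$ sum to zero), but the conclusion is correct.
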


\subsection{Proof of Theorem~\ref{jump}}
\label{sec:4.3}

Thanks to the derived regularized formulations of the integral operators, now we can prove the jump conditions stated in Theorem~\ref{jump}.

The jump condition for the single-layer potential follows trivially. Relating to the proof of Theorem~\ref{regK}, it can be concluded that $\mathcal{D}(\varphi)(z), x\in\Gamma$ takes a form similar to (\ref{KP}) for $z=x\pm h\nu_x\notin\Gamma$, i.e.,
\ben
\mathcal{D}= \begin{bmatrix}
	\mathcal{D}_1& \mathcal{D}_2\\
	\mathcal{D}_3 & \mathcal{D}_4
\end{bmatrix}, \quad \mathcal{D}_j=\mathcal{D}_j^1+\mathcal{D}_j^2M(\partial,\nu)+\left\{ {\mathcal{D}_j^3M(\partial,\nu)} \right\}^\top,\quad j=1,\cdots,4.
\enn
Letting $h\rightarrow 0^+$ (i.e. $z\rightarrow x\in\Gamma$), and applying the classical jump relations for acoustic and elastic problems~\cite{HW08}, it is only necessary to study the jumps of $\mathcal{D}_1^1$, $\mathcal{D}_2^3$, $\mathcal{D}_3^2$ and $\mathcal{D}_4^1$ in the forms associated with $K_1^1$, $K_2^3$, $K_3^2$ and $K_4^1$, respectively. Note that for $x\in\Gamma$,
\ben
\lim_{h\rightarrow 0^+,z=x\pm h\nu_x} \int_\Gamma \nabla_y\gamma_{k_\xi}(z,y)\varphi(y)ds_y= \pm \frac{\nu_x}{2}\varphi(x)+ \int_\Gamma \nabla_y\gamma_{k_\xi}(x,y)\varphi(y)ds_y,
\enn
which implies
\ben
&&\lim_{h\rightarrow 0^+,z=x\pm h\nu_x} \int_\Gamma \partial_{\nu_y}\gamma_{k_s}(z,y)u(y)ds_y= \pm\frac{1}{2}u(x)+\int_\Gamma \pa_{\nu_y}\gamma_{k_s}(x,y)u(y)ds_y,\\
&&-\lim_{h\rightarrow 0^+,z=x\pm h\nu_x} \int_\Gamma\partial_{\nu_y}\left[\frac{k_p^2-k_1^2}{k_1^2-k_2^2}\gamma_{k_1}(z,y) -\frac{k_p^2-k_2^2}{k_1^2-k_2^2}\gamma_{k_2}(z,y)\right]p(y)ds_y\\
&& =\pm\frac{1}{2}p(x)+\int_\Gamma \pa_{\nu_y}\left[\frac{k_p^2-k_1^2}{k_1^2-k_2^2}\gamma_{k_1}(x,y) -\frac{k_p^2-k_2^2}{k_1^2-k_2^2}\gamma_{k_2}(x,y)\right]p(y)ds_y.
\enn
It follows that
\ben
\lim_{h\rightarrow 0^+,z=x\pm h\nu_x} \mathcal{D}_1^1(u)(z) = \pm\frac{1}{2}u(x)+K_1^1(u)(x), \quad x\in\Gamma,
\enn
\ben
\lim_{h\rightarrow 0^+,z=x\pm h\nu_x} \mathcal{D}_2^3(p)(z) = K_2^3(p)(x), \quad x\in\Gamma,
\enn
\ben
\lim_{h\rightarrow 0^+,z=x\pm h\nu_x} \mathcal{D}_3^2(u)(z) = K_3^2(u)(x), \quad x\in\Gamma,
\enn
and
\ben
\lim_{h\rightarrow 0^+,z=x\pm h\nu_x} \mathcal{D}_4^1(p)(z) = \pm\frac{1}{2}p(x)+K_4^1(p)(x), \quad x\in\Gamma.
\enn
Therefore, we have
\ben
\lim_{h\rightarrow 0^+,z=x\pm h\nu_x} \mathcal{D}(\varphi)(z)= \pm\frac{1}{2}\varphi(x) +K(\varphi)(x),\quad x\in\Gamma.
\enn

Analogously, it can be derived that $\widetilde{T}(\pa_z,\nu_x)\mathcal{S}(\varphi)(z), x\in\Gamma$ takes a form similar to (\ref{KPP}) for $z=x\pm h\nu_x\notin\Gamma$. Due to the fact that for $x\in\Gamma$,
\ben
&&\lim_{h\rightarrow 0^+,z=x\pm h\nu_x} \int_\Gamma \nu_x\cdot\nabla_z\gamma_{k_s}(z,y)u(y)ds_y= \mp\frac{1}{2}u(x)+\int_\Gamma \pa_{\nu_x}\gamma_{k_s}(x,y)u(y)ds_y,\\
&&-\lim_{h\rightarrow 0^+,z=x\pm h\nu_x} \int_\Gamma \nu_x\cdot\nabla_z \left[\frac{k_p^2-k_1^2}{k_1^2-k_2^2}\gamma_{k_1}(z,y) -\frac{k_p^2-k_2^2}{k_1^2-k_2^2}\gamma_{k_2}(z,y)\right]p(y)ds_y\\
&&= \mp\frac{1}{2}p(x)+\int_\Gamma \pa_{\nu_x}\left[\frac{k_p^2-k_1^2}{k_1^2-k_2^2}\gamma_{k_1}(x,y) -\frac{k_p^2-k_2^2}{k_1^2-k_2^2}\gamma_{k_2}(x,y)\right]p(y)ds_y,
\enn
we arrive at the jump conditions
\ben
\lim_{h\rightarrow 0^+,z=x\pm h\nu_x} \widetilde{T}(\pa_z,\nu_x)\mathcal{S}(\varphi)(z)=\mp\frac{1}{2}\varphi(x) +K'(\varphi)(x),\quad x\in\Gamma.
\enn

It remains to prove the jump conditions for $\widetilde T(\partial_z,\nu_x)\mathcal{D}(\varphi)(z)$ as $h\rightarrow 0^+$. We can write it as
\ben
\widetilde T(\partial_z,\nu_x)\mathcal{D}(\varphi)(z)=\begin{bmatrix}
	\mathcal{N}_1& \mathcal{N}_2\\
	\mathcal{N}_3 & \mathcal{N}_4
\end{bmatrix}(\varphi)(z),
\enn
and it can be proved that $\mathcal{N}_j,j=1,\cdots,4$ take forms similar to $N_j,j=1,\cdots,4$, respectively. We first study $\mathcal{N}_1$ in the form
\ben
\mathcal{N}_1(u)(z)=&&\mathcal{N}^1_1(u)(z)+M(\partial_z,\nu_x)\mathcal{N}^2_1(M(\partial,\nu)u)(z)+ \tau_2^{z,x}\mathcal{N}^3_1(\tau_1u)(z)\\
&&+M(\partial_z,\nu_x)\mathcal{N}^4_1(u)(z)+\mathcal{N}^5_1(M(\partial,\nu)u)(z),
\enn
where $\tau_2^{z,x}u(z)=(\nu_x\times\nabla^S_z)\cdot u(z)$. Obviously,
\ben
\lim_{h\rightarrow 0^+,z=x\pm h\nu_x} \mathcal{N}^j_1(u)(z)= N^j_1(u)(x),\quad x\in\Gamma, j=1,2,3,
\enn
and thus,
\ben
\lim_{h\rightarrow 0^+,z=x\pm h\nu_x} M(\partial_z,\nu_x)\mathcal{N}^2_1(M(\partial,\nu)u)(z)= M(\partial_x,\nu_x)N^2_1(M(\partial,\nu)u)(x),\quad x\in\Gamma,
\enn
\ben
\lim_{h\rightarrow 0^+,z=x\pm h\nu_x} \tau_2^{z,x}\mathcal{N}^3_1(\tau_1u)(z)= \tau_2N^3_1(\tau_1u)(x),\quad x\in\Gamma.
\enn
In addition, note that
\ben
\mathcal{N}^4_1(u)(z)&= & \int_\Gamma \nabla _y \left[-2\mu(\gamma_{k_s}(z,y)-\gamma_{k_1}(z,y) )+C_3(\gamma_{k_1}(z,y)-\gamma_{k_2}(z,y) )\right] \nu^\top_y u(y)ds_y\\&\quad&
	+\mu\int_\Gamma\partial_{\nu_y}\gamma_{k_s}(z,y) u(y)ds_y,\\
\mathcal{N}^5_1(u)(z)&= &\int_\Gamma \nu_x\nabla _z^\top \left[-2\mu(\gamma_{k_s}(z,y)-\gamma_{k_1}(z,y) )+C_4(\gamma_{k_1}(z,y)-\gamma_{k_2}(z,y) )\right]u(y)ds_y\\&\quad&
	+\mu\int_\Gamma\nu_x^\top\nabla_z\gamma_{k_s}(z,y) u(y)ds_y,
\enn
we have
\ben
\lim_{h\rightarrow 0^+,z=x\pm h\nu_x} \mathcal{N}^4_1(u)(z)= \pm \frac{\mu}{2}u(x) +N^4_1(u)(x),\quad x\in\Gamma,
\enn
and
\ben
\lim_{h\rightarrow 0^+,z=x\pm h\nu_x} \mathcal{N}^5_1(u)(z)= \mp \frac{\mu}{2}u(x) +N^5_1(u)(x),\quad x\in\Gamma.
\enn
Therefore,
\ben
&&\lim_{h\rightarrow 0^+,z=x\pm h\nu_x} M(\partial_z,\nu_x)\mathcal{N}^4_1(u)(z)+\mathcal{N}^5_1(M(\partial,\nu)u)(z)\\
&=& \pm \frac{\mu}{2}M(\partial_x,\nu_x)u(x) +M(\partial_x,\nu_x)N^4_1(u)(x)\\
&\quad& \mp \frac{\mu}{2}M(\partial_x,\nu_x)u(x)+ N^5_1(M(\partial,\nu)u)(x)\\
&=& M(\partial_x,\nu_x)N^4_1(u)(x)+ N^5_1(M(\partial,\nu)u)(x),\quad x\in\Gamma,
\enn
which gives that
\ben
\lim_{h\rightarrow 0^+,z=x\pm h\nu_x} \mathcal{N}_1(u)(z)=N_1(u)(x),\quad x\in\Gamma.
\enn
The proof of the other three jump conditions
\ben
\lim_{h\rightarrow 0^+,z=x\pm h\nu_x} \mathcal{N}_j(u)(z)=N_j(u)(x),\quad \quad x\in\Gamma, j=2,3,4,
\enn
is analogous and hence is omitted. This completes the proof of Theorem~\ref{jump}.$\hfill\square$

\section{Regularized boundary integral equation solver}
\label{sec:5}

\subsection{Regularized boundary integral equations}
\label{sec:5.1}

Making use of the spectral properties of the poroelastic integral operators presented in Section~\ref{sec:3.3}, we are able to construct regularized boundary integral equations (RBIEs) with favorable features of better spectral properties. According to the regularized integral equation method discussed in~\cite{ZXY21} for two-dimensional poroelastic problems, we can choose the static single-layer operator $\mathcal{R}=S_0$ given by
\be
S_0(\varphi)(x):=\int_\Gamma(E_0(x,y))^\top\varphi(y)ds_y, \quad x\in\Gamma,
\en
as the regularized operator.

\begin{figure}[htb]
	\centering
	\begin{tabular}{cc}
		\includegraphics[scale=0.3]{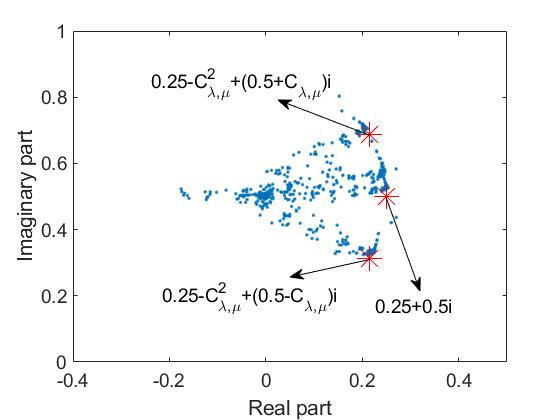} &
		\includegraphics[scale=0.3]{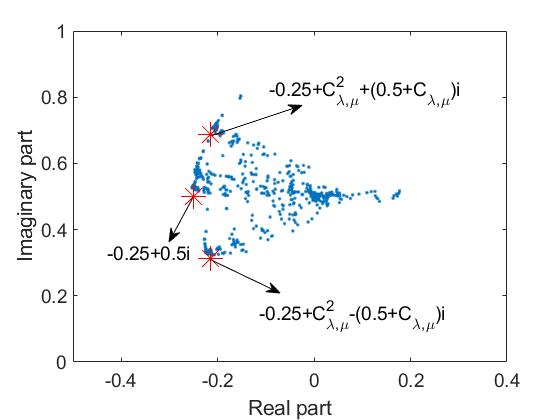} \\
		(a) $i\eta\left(\frac{1}{2}I-K\right)-\mathcal{R}N$ & (b) $i\eta( \frac{I}{2}-K')+N\mathcal{R}$  \\
	\end{tabular}
	\caption{Eigenvalue distributions of the operators $i\eta\left(\frac{1}{2}I-K\right)-\mathcal{R}N$ (a) and $i\eta( \frac{I}{2}-K')+N\mathcal{R}$ (b) for a unit ball scatterer.}
	\label{Reeig}
\end{figure}

Therefore, for the direct method, the DCBIE (\ref{DCBIE}) can be regularized as (called DRBIE)
\be
\label{DRBIE}
\left[i\eta\left(\frac{1}{2}I-K\right)-\mathcal{R}N\right]U(x)=-\left[\mathcal{R}(\frac{1}{2}I+K')+i\eta S\right](F(x)),\quad x\in \Gamma.
\en
For the indirect method, replacing the solution representation (\ref{CSDLP}) by
\be
\label{RR}
U(x)=(\mathcal{DR}-i\eta\mathcal{S})(\varphi)(x), \quad x\in \Omega^c, \quad \eta\neq 0,
\en
we can obatin the regularized form of ICBIE (called IRBIE) as follows
\be
\label{IRBIE}
\left[i\eta( \frac{I}{2}-K')+N\mathcal{R}\right](\varphi)=F \quad \rm on\quad \Gamma,
\en
instead of the classical ICBIE (\ref{ICBIE}). In view of the spectra results in Section~\ref{sec:3.3}, we can conclude that the spectrum of the regularized integral operator on the left-hand side of DRBIE (\ref{DRBIE}) consists of three nonempty sequences of eigenvalues which converge to $1/4-i\eta/2$, $1/4-C^2_{\lambda,\mu}+i\eta(1/2+C_{\lambda,\mu})$ and $1/4-C^2_{\lambda,\mu}+i\eta(1/2-C_{\lambda,\mu})$, respectively. Similarly, we can also observe that the eigenvalues of the integral operator on the left-hand side of IRBIE (\ref{IRBIE}) accumulated at $-1/4+i\eta/2$, $-1/4+C^2_{\lambda,\mu}+i\eta(1/2+C_{\lambda,\mu})$ and $-1/4+C^2_{\lambda,\mu}+i\eta(1/2-C_{\lambda,\mu})$, see for example Figure~\ref{Reeig}, a numerical verification.

\subsection{Numerical discretization}
\label{sec:5.2}

In this section, we briefly introduce the application of the Chebyshev-based rectangular-polar solver discussed in \cite{BG20,BY20} to the numerical discretization the poroelastic BIOs. Based on a partition of the boundary using non-overlapping parametric curvilinear patches, this approach interpolates the unknowns on a Chebyshev grid on each patch in terms of Chebyshev polynomials. For the corresponding acceleration of this method, we refer to~\cite{JBB}.

Let $\Gamma$ be partitioned into a set of $M$ non-overlapping parametrized (logically-rectangular) patches $\Gamma_q, q=1,...,M$ as
\ben
\Gamma=\bigcup_{q=1}^M \Gamma_q, \quad \Gamma_q=\left\{ \textbf{r}^q(u,v)=(x^q(u,v),y^q(u,v),z^q(u,v))^\top:\left[-1,1\right]^2\to \R^3 \right\}.
\enn
Introducing the tangential covariant basis vectors and surface normal on $\Gamma_q:$
\ben
a_u^q=\frac{\partial \textbf{r}^q(u,v)}{\partial u},\quad a_v^q=\frac{\partial \textbf{r}^q(u,v)}{\partial v}, \quad \nu^q=\frac{a_u^q\times a_v^q}{\left| a_u^q\times a_v^q \right|},
\enn
we can obtain the metric tensor as
\ben
{G^q} = \begin{bmatrix}
	g^q_{uu}&g^q_{uv}\\
	g^q_{vu}&g^q_{vv}
	\end{bmatrix},
\enn
where $g^q_{ij}=a^q_i\cdot a^q_j$ and thus, the surface element Jacobian is given by
\ben
ds=J^q(u,v)dudv=\sqrt {\left|G^q  \right|}dudv.
\enn
Here, $\left|G^q  \right|$ is the determinant of $G^q$. As a result, the surface gradient of a given density $\varphi=\varphi(\textbf{r}^q(u,v))$ can be expressed as
\ben
\nabla_x^S\varphi=\sum\limits_{i,j=1}^{2} g^{ij}\partial_i\varphi\partial_j \textbf{r}^q(u_i,v_j),\quad \partial_1=\frac{d}{du},\partial_2=\frac{d}{dv},
\enn
where $g^{ij},i,j=1,2$ denote the components of the inverse of the matrix $G^q$.

Given a density $\varphi$, it can be approximated on $\Gamma_q$ by the Chebyshev polynomials as
\ben
\varphi (x) = \sum\limits_{i,j = 0}^{N - 1} \varphi _{ij}^q a_{ij}(u,v), \quad x\in \Gamma_q,
\enn
where
\ben
a_{ij}(u,v)=\frac{1}{N^2}\sum^{N-1}_{m,n=0} \alpha_n\alpha_mT_n(u_i)T_m(v_j)T_n(u)T_m(v),\quad {\alpha _n} = \begin{cases}
	1, & n=0,\cr
	2, & n\neq 0,
	\end{cases}
\enn
and the coefficients $\varphi _{ij}^q=\varphi(x^q_{ij})$ denote the values of the continuous density $\varphi$ at the discretization points $x^q_{ij}=\textbf{r}^q(u_i,v_j)$ with
\ben
u_i=\cos\left(\frac{2i+1}{2N}\pi\right),\quad v_j=\cos\left(\frac{2j+1}{2N}\pi\right),\quad i,j=0,...,N-1.
\enn
Therefore, we can obtain the approximation of the surface gradient as
\ben
(\nabla_x^S\varphi)\Big|_{x=x^q_{ij}}=\sum\limits_{n,m=0}^{N-1}B_{ij,nm}^q\varphi_{nm}^q, \quad B_{ij,nm}^q=\left(\sum\limits_{i,j=1}^{2} g^{ij}\partial_ia_{nm}\partial_j \textbf{r}^q  \right)\Big|_{u=u_i,v=v_j}.
\enn

We now discuss the discretizations of the prorelastic BIOs. On a basis of the regularized formulations of integral operators given in Sections~\ref{sec:4}, the numerical implementations can be converted into evaluating multiple operators of two types, (i)Integral operators with weakly-singular kernels $H(x,y)$
\ben
\mathcal{H}\varphi(x)=\int_\Gamma H(x,y)\varphi(y)ds_y,
\enn
and (ii) surface-differentiation operators $M(\pa,\nu)$, $\tau_1$, $\tau_2$ appearing in Section~\ref{sec:4}, which can be extracted from the approximation of surface gradient $\nabla_x^S$. Clearly, the integrals $\mathcal{H}\varphi(x)$ over $\Gamma$ can be split into the sum of integrals over each of the $M$ patches,
\ben
\mathcal{H}\varphi(x)=\sum\limits_{q=1}^{M} \mathcal{H}_q(x),\quad \mathcal{H}_q(x):=\int_{\Gamma_q}H(x,y)\varphi(y)ds_y,\quad x\in \Gamma.
\enn

In the ``non-adjacent'' integration case, in which the target point $x^{\tilde q}_{ij}$ is far from the integration patch, the integral $\mathcal{H}_q(x^{\tilde q}_{ij})$ is non-singular. Then the classical Fej\'er’s first quadrature rule can be utilized to obtain the approximation
\ben
\label{non-ad}
\mathcal{H}_q(x^{\tilde q}_{ij})\approx \sum\limits_{m,n=0}^{N-1} A^{\tilde q,q}_{ij,nm} \varphi_{nm}^q,
\enn
where
\ben
A^{\tilde q,q}_{ij,nm}=H(x^{\tilde q}_{ij},\textbf r^q(u_n,v_m))J^q(u_n,v_m)w_nw_m,
\enn
with the quadrature weights
\ben
w_j=\frac{2}{N}\left(1-2\sum\limits_{l=1}^{\left\lfloor N/2 \right\rfloor }\frac{1}{4l^2-1}\cos(lu_j)\right), \quad j=0,...,N-1.
\enn

In the ``adjacent'' integration case, in which the point $x^{\tilde q}_{ij}$ either lies within the integration patch or is located very close to it, the integral $\mathcal{H}_q(x^{\tilde q}_{ij})$ becomes weakly-singular and nearly-singular, respectively. It is suggested in \cite{BG20} to constructed a new graded mesh, relying on a smoothing change of variables $\xi_{u}(s)$ (for more details, see \cite{BG20,BY20}), around the point which lies closest to $x^{\tilde q}_{ij}$ with parameters
\ben
(\tilde{u}^q,\tilde{v}^q)=\mbox{arg}\mbox{min}_{(u,v)\in\left[-1,1\right]^2}\left\{ \left|x^{\tilde{q}}_{ij}-\textbf{r}^q(u,v) \right| \right\}.
\enn
Then the single integral can be approximated by
\ben
\label{ad}
\nonumber
\mathcal{H}_q(x^{\tilde q}_{ij})
& \approx &\sum\limits_{n,m=0}^{N-1} \varphi_{nm}^q\int_{-1}^{1}\int_{-1}^{1}H(x^{\tilde q}_{ij},\textbf{r}(u,v))J^q(u,v)a_{nm}(u,v)dudv \\
\nonumber
&=&\sum\limits_{n,m=0}^{N-1} \varphi_{nm}^q\int_{-1}^{1}\int_{-1}^{1}\tilde{H}(x^{\tilde q}_{ij},s,t)\tilde{J}^q(s,t)\tilde{a}_{nm}(s,t)\xi_{\tilde{u}^q}^{'}(s)\xi_{\tilde{v}^q}^{'}(t)dsdt\\
&\approx&\sum\limits_{n.m=0}^{N-1}C_{ij,nm}^{\tilde{q},q}\varphi_{nm}^q,
\enn
where
\ben
&&C_{ij,nm}^{\tilde{q},q}=\sum\limits_{n.m=0}^{N-1}H(x^{\tilde q}_{ij},\textbf{r}^q(\xi_{\tilde{u}^q}(s),\xi_{\tilde{v}^q}(t))) J^q(\xi_{\tilde{u}^q}(s),\xi_{\tilde{v}^q}(t)) \\
&& \quad\quad\quad\quad\quad\quad\quad\times a_{nm}(\xi_{\tilde{u}^q}(s),\xi_{\tilde{v}^q}(t)) \xi_{\tilde{u}^q}^{'}(\tilde{t}_{l_1})\xi_{\tilde{v}^q}^{'}(\tilde{t}_{l_2})\tilde{w}_{l_1}\tilde{w}_{l_2}.
\enn
Here, the quadrature nodes and weights for an order $N^\beta$ are given analogous to the ``non-adjacent'' case.

\section{Numerical experiments}
\label{sec:6}

In this section, several numerical examples, involving three bounded obstacles depicted in Fig.~\ref{scatter}, are presented to demonstrate the accuracy and efficiency of the proposed methods for solving three-dimensional poroelastic problems. Utilizing the dimensionless technique discussed in~\cite{CD95,S05}, we set $\mu=2$, $\nu_p=0.2$, $\nu_u=0.33$, $B=0.62$, $C=0.66$, $\phi=0.333$, $\rho_s=1$, $\rho_f=0.5$, $\kappa=1$. We additionally choose $\eta=1$. Here, we use the fully complex version of the iterative solver GMRES to produce the solutions of the integral equations and the maximum errors defined by
\be
\epsilon_{\infty}&:=\frac{\mbox{max}_{x\in S}{\left|U^{num}(x)-U^{exa}(x)\right|}}{\mbox{max}_{x\in S}{\left|U^{exa}(x)\right|}},
\en
will be displayed. Here, $S$ is the square $\left[-2,2\right]\times \left[-2,2\right]\times \left\{ {2} \right\}  \subset \Omega^c$, $U^{exa}$ is the exact solution of the poroelastic problem (\ref{model}), and $U^{num}$ is the numerical solution generated from the DCBIE (\ref{DCBIE}), ICBIE (\ref{ICBIE}), DRBIE (\ref{DRBIE}) or IRBIE (\ref{IRBIE}), respectively. The particular implementation for the numerical experiments is programmed in Fortran and is parallelized using OpenMP.

\begin{figure}[htb]
	\centering
	\begin{tabular}{ccc}
		\includegraphics[scale=0.18]{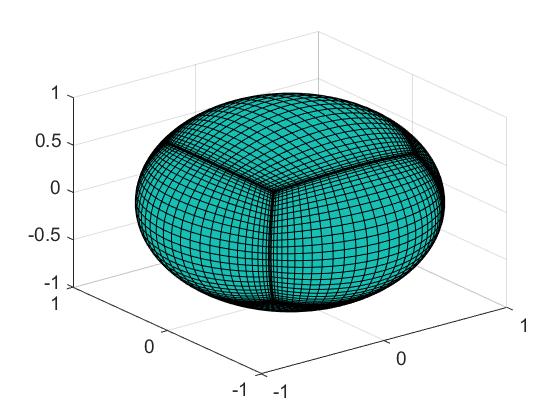} &
		\includegraphics[scale=0.18]{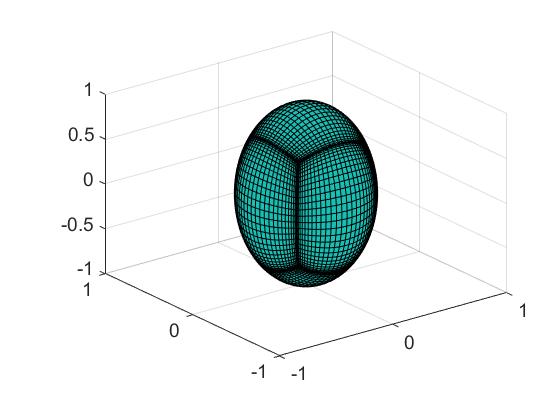} &
		\includegraphics[scale=0.18]{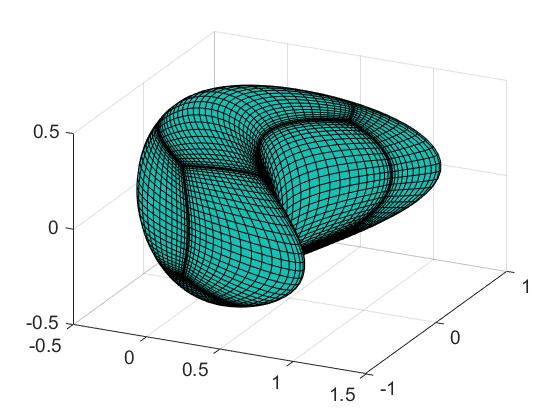} \\
		(a) Ball & (b) Ellipsoid & (c) Bean \\
	\end{tabular}
	\caption{Obstacles considered in the numerical tests.}
	\label{scatter}
\end{figure}

We first test the accuracy of the proposed methods. The exact solution $U^{exa}=({u^{exa}}^\top,p^{exa})^\top$ is given by
\ben
u^{exa}(x)=E_{21}(x,z), \qquad p^{exa}(x)=E_{22}(x,z), \qquad x\in \Omega^c,
\enn
with $z=(0,0.5,0.3)$ for the obstacle Fig.~\ref{scatter}(a) and $z=(0,0,0)$ for the obstacles Fig.~\ref{scatter}(b,c), which gives the boundary data $F=\widetilde T(\partial,\nu)U^{exa}$ on $\Gamma$. We first consider $\omega=\pi$ and the Chebyshev grid with $M=6$, $N=16$ and $N_\beta=100$. Fig.~\ref{direct exa} shows the errors $|U^{num}(x)-U^{exa}(x)|$ between the numerical solution $U^{num}$ resulting from solving DRBIE (\ref{DRBIE}) and the exact solution $U^{exa}$ for the obstacle Fig.~\ref{scatter}(a) and $x\in\{x\in\R^3: |x|=2\}$ with a maximum value $1.814\times 10^{-8}$. Next, we consider the poroelastic problem of scattering by obstacle Fig.~\ref{scatter}(b) on a basis of six $2\times 2$ patches ($M=24$) with $\omega=20$, $N=16$, $N_\beta=200$ and employ IRBIE (\ref{IRBIE}), the point-wise values of the numerical and exact solutions on the line segment $\{x\in\R^3: x_1=2, x_2\in[-2,2], x_3=2\}$ are displayed in Fig.~\ref{indirect exa}. The relative error for this case is $\epsilon_{\infty}=6.67\times 10^{-6}$. In Fig.~\ref{NE}, the numerical errors $\epsilon_{\infty}$ with respect to $N$ using DRBIE and IRBIE for the obstacles Fig.~\ref{scatter} are presented while choosing $\omega=2\pi$ and Chebyshev grid with $M=6$. Higher accuracy can be achieved by increasing the parameter $N^\beta$ and treating the evaluation of weakly-singular kernels for small $|x-y|$ with cares.

\begin{figure}[htb]
	\centering
	\begin{tabular}{c}
		\includegraphics[scale=0.35]{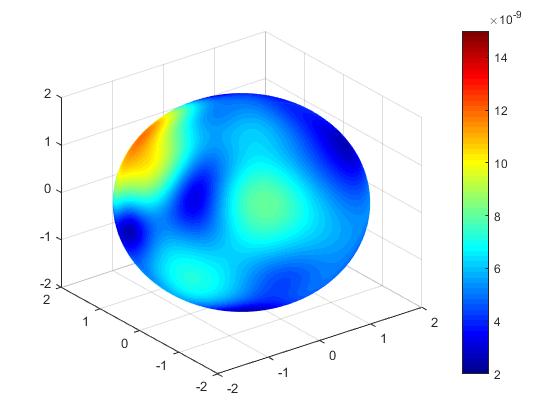}
	\end{tabular}
	\caption{The numerical errors on a sphere for the poroelastic problem of sacttering by the obstacle Fig.~\ref{scatter}(a).}
	\label{direct exa}
\end{figure}

\begin{figure}[htb]
	\centering
	\begin{tabular}{cccc}
		\includegraphics[scale=0.14]{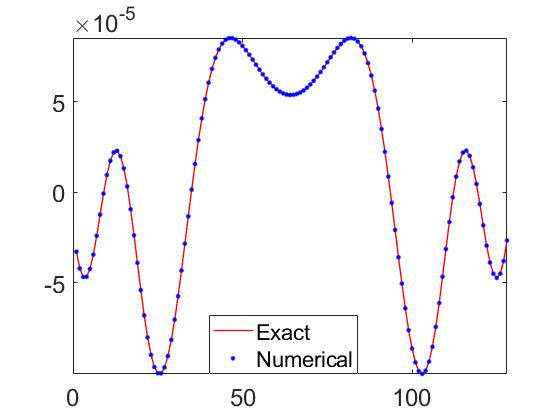} &
		\includegraphics[scale=0.14]{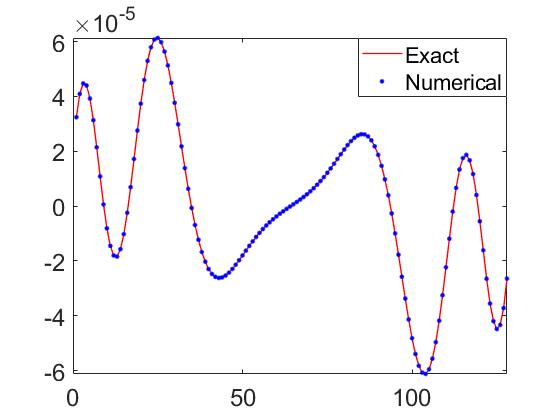} &
		\includegraphics[scale=0.14]{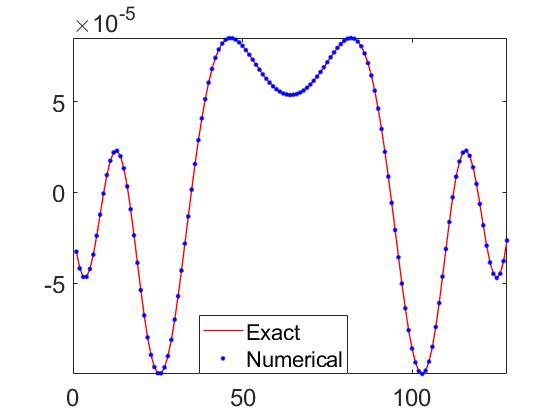} &
		\includegraphics[scale=0.14]{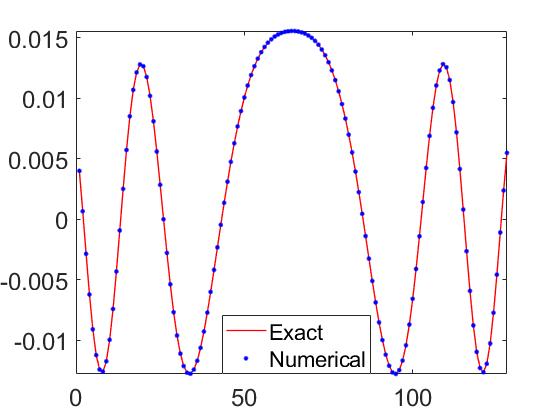} \\
		(a) $\mbox{Re}(u_1)$ & (b) $\mbox{Re}(u_2)$ &(c)$\mbox{Re}(u_3)$&(d)$\mbox{Re}(p)$  \\
		\includegraphics[scale=0.14]{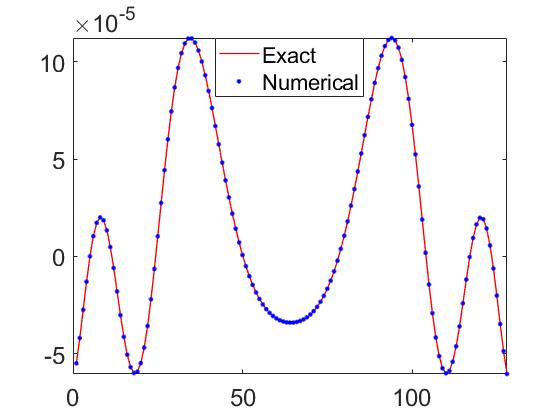} &
		\includegraphics[scale=0.14]{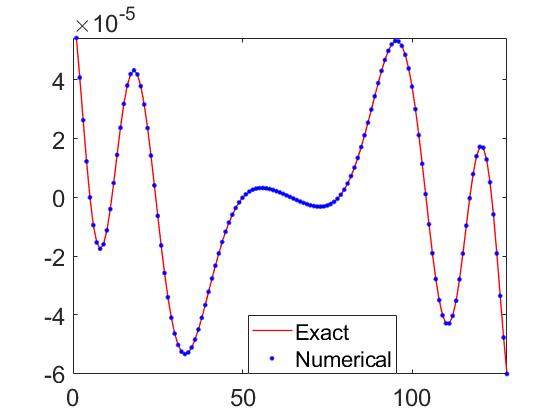} &
		\includegraphics[scale=0.14]{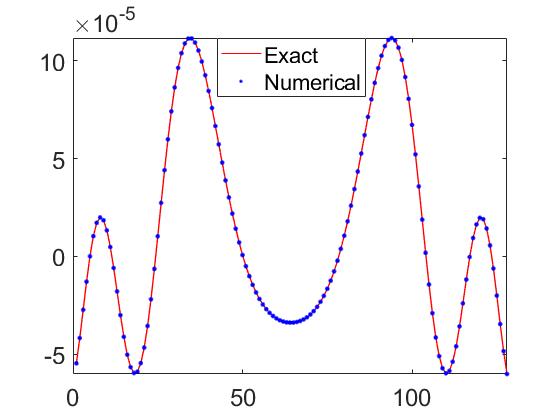} &
		\includegraphics[scale=0.14]{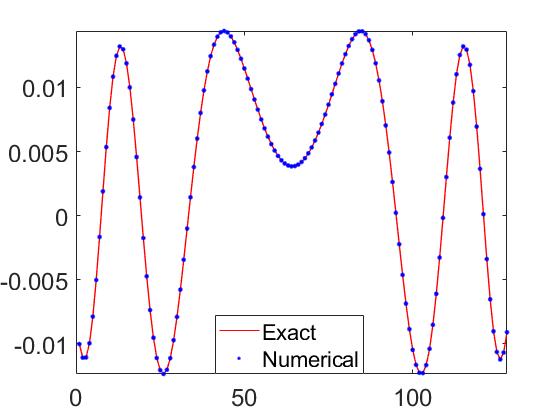} \\
		(e) $\mbox{Im}(u_1)$ & (f) $\mbox{Im}(u_2)$ &(g)$\mbox{Im}(u_3)$&(h)$\mbox{Im}(p)$  \\
	\end{tabular}
	\caption{Comparison of the exact and numerical solutions for the poroelastic problem of scattering by obstacle Fig.~\ref{scatter}(b).}
	\label{indirect exa}
\end{figure}

\begin{figure}[htb]
	\centering
	\begin{tabular}{ccc}
		\includegraphics[scale=0.18]{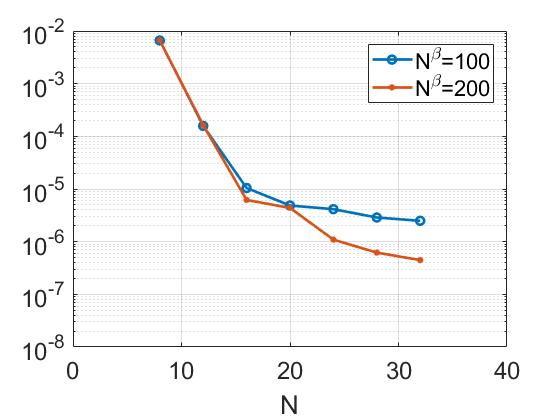} &
		\includegraphics[scale=0.18]{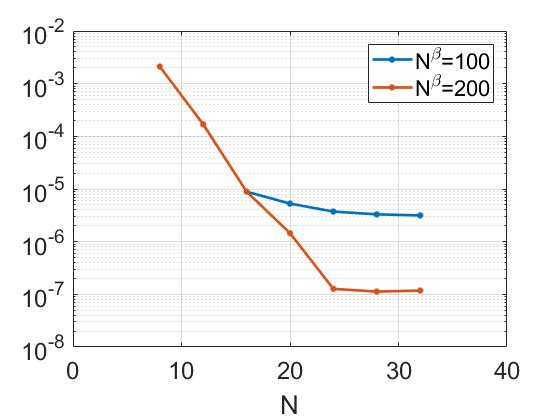} &
		\includegraphics[scale=0.18]{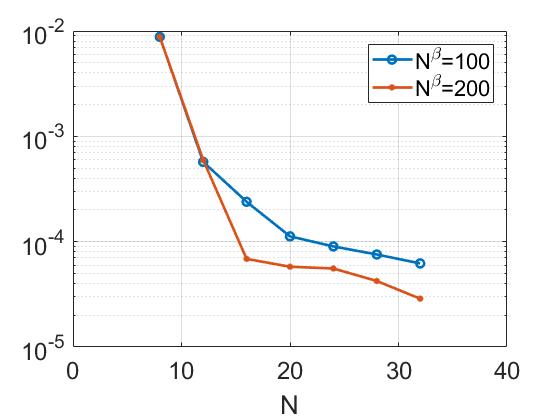} \\
		(a) DRBIE (\mbox{Ball}) & (b) DRBIE (\mbox{Ellipsoid}) &(c)DRBIE (\mbox{Bean})  \\
		\includegraphics[scale=0.18]{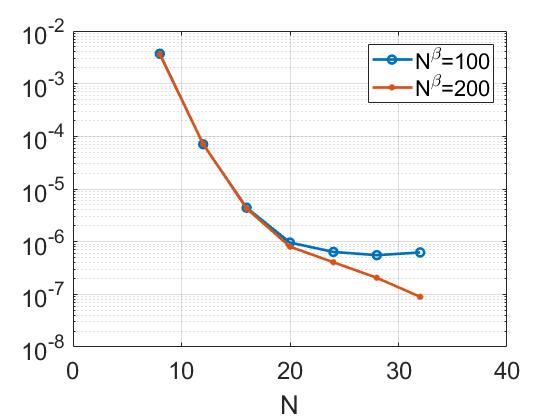} &
		\includegraphics[scale=0.18]{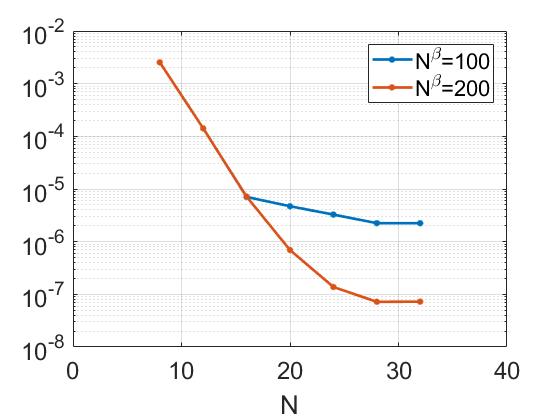} &
		\includegraphics[scale=0.18]{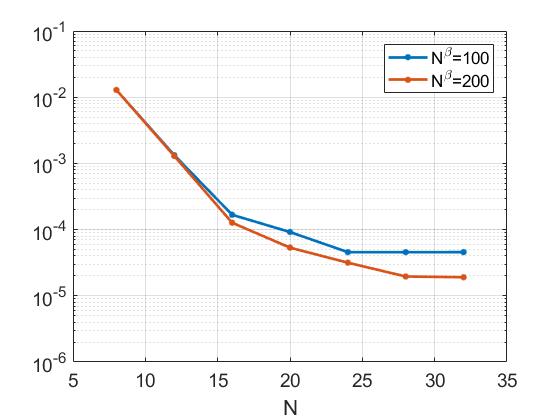} \\
		(e) IRBIE (\mbox{Ball}) & (f) IRBIE (\mbox{Ellipsoid}) &(g)IRBIE (\mbox{Bean})  \\
	\end{tabular}
	\caption{Numerical errors $\epsilon_{\infty}$ for the problem of scattering by the the obstacles Fig.~\ref{scatter}. }
	\label{NE}
\end{figure}

Next, we verify the efficiency of the regularized integral equation methods. Choosing $\omega=2\pi$ and the Chebyshev grid with $M=6$, $N=32$ and $N_\beta=200$, Fig.~\ref{GMERES} displays the history of GMRES residuals as functions of the number of iterations for the method of using DCBIE (\ref{DCBIE}), ICBIE (\ref{ICBIE}), DRBIE (\ref{DRBIE}) and IRBIE (\ref{IRBIE}), respectively. The rapid convergence results of regularized methods demonstrate that use of the regularized integral equations is highly beneficial compared to the un-regularized ones. With $\omega=20$ and $M=24$,  Table~\ref{timedi} lists the precomputation time, time per iteration and number of iterations required by the regularized integral equation methods. For the DRBIE method, an accuracy of $7.6\times 10^{-3}$ $(\mbox{resp}. 2.5\times 10^{-6})$ can be achieved by setting $N=8$ $(\mbox{resp}. N=16)$, while an accuracy of $2.1\times 10^{-2}$ $(\mbox{resp}. 1.7\times 10^{-6})$ can be obtained for the IRBIE method.

\begin{figure}[htb]
	\centering
	\begin{tabular}{cc}
		\includegraphics[scale=0.3]{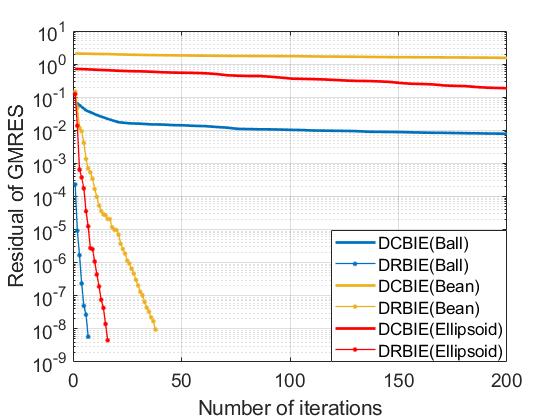} &
		\includegraphics[scale=0.3]{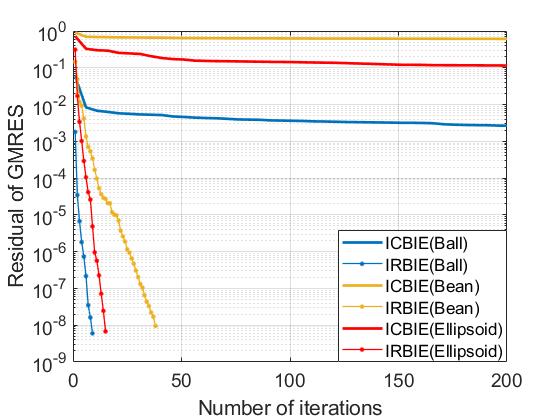} \\
		(a) DRBIE  & (b) IRBIE
	\end{tabular}
	\caption{GMRES residual $\epsilon_r$ for the problem of scattering by the obstacles Fig.~\ref{scatter}. }
	\label{GMERES}
\end{figure}

\begin{table}[htb]
	\label{timedi}
	\centering
	\caption{Computation times and number of iterations required by the DRBIE method.}
	\begin{tabular}{|c|c|c|c|c|c|c|}
		\hline
		\multirow{2}{*}{$N$} &\multirow{2}{*}{$N_\beta$}& \multirow{2}{*}{$N_{DOF}$}  & \multicolumn{4}{|c|}{DRBIE} \\
		\cline{4-7}
		& && Time(prec.) & Time(1iter.) & Niter($\epsilon_r$) & $\epsilon_{\infty}$\\
		\hline
		8&100&$4\times 1536$  &4.7 s&11.7 s&67 ($9.9\times 10^{-6}$)&$9.1\times 10^{-3}$     \\
		8&200&$4\times 1536$  &18.3 s&11.6  s&49 ($9.8\times 10^{-6}$)&$7.6\times 10^{-3}$ \\
		16&100&$4\times 6144$&24.4 s&2.18 min&54 ($9.4\times 10^{-9}$)&$5.9\times 10^{-6}$  \\
		16&200&$4\times 6144$&1.46 min   &2.18 min&33 ($9.8\times 10^{-9}$) &$2.5\times 10^{-6}$  \\
		\hline
		\multirow{2}{*}{$N$} &\multirow{2}{*}{$N_\beta$}  & \multirow{2}{*}{$N_{DOF}$}&  \multicolumn{4}{|c|}{IRBIE} \\
		\cline{4-7}
		& && Time(prec.) & Time(1iter.) & Niter($\epsilon_r$) & $\epsilon_{\infty}$\\
		\hline
		8&100 &$4\times 1536$ &3.6 s   &25.5 s    &31 ($8.5\times 10^{-6}$ )  &$2.1\times 10^{-2}$   \\
		8&200 &$4\times 1536$ &13.8 s   &24.7 s   &30 ($8.6\times 10^{-6}$ )  & $2.1\times 10^{-2}$  \\
		16&100&$4\times 6144$&18.1 s   &2.14 min   &22 ($9.6\times 10^{-9}$ )  &$2.4\times 10^{-6}$     \\
		16&200&$4\times 6144$ &1.09 min &2.65 min  &20 ($8.5\times 10^{-9}$ ) &$1.7\times 10^{-6}$ \\
		\hline
	\end{tabular}
\end{table}

Finally, we consider the scattering of an incident point source $U^{inc}$ in the form
\ben
	U^{inc}=({u^{inc}}^\top,p^{inc})^\top,\quad u^{inc}(x)=E_{12}(x,z),\quad p^{inc}(x)=E_{22}(x,z)
\enn
by the obstacle Fig.~\ref{scatter}(b) where $z=(3,2,0)$ denotes the location of the point source. The numerical solutions in $\Omega^c$ with $\omega=20$ are presented in Figs.~\ref{dscattering} and \ref{scattering} based on the DRBIE and IRBIE, respectively. A total of 41 (resp. 32) iterations sufficed for the DRBIE (resp. IRBIE) method to reach the GMRES residual tolerance value $\epsilon_r=1\times 10^{-4}$. The numerical results demonstrate the accuracy and efficiency of the proposed regularized boundary integral equation methods.

\begin{figure}[htb]
	\centering
	\begin{tabular}{cc}
		\includegraphics[scale=0.25]{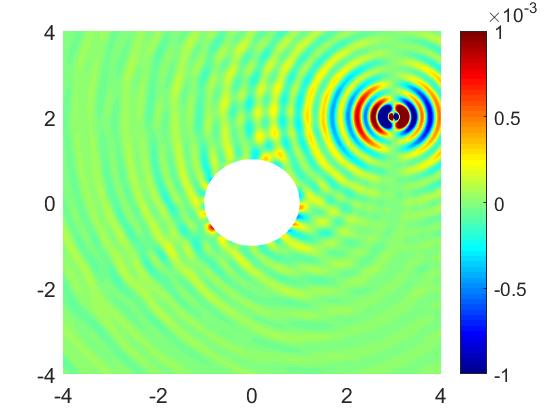} &
		\includegraphics[scale=0.25]{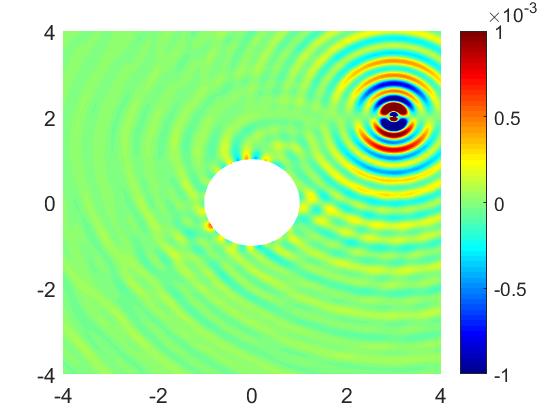} \\
		(a) Re($u_1^{num}$) & (b) Re($u_2^{num}$)  \\
		\includegraphics[scale=0.25]{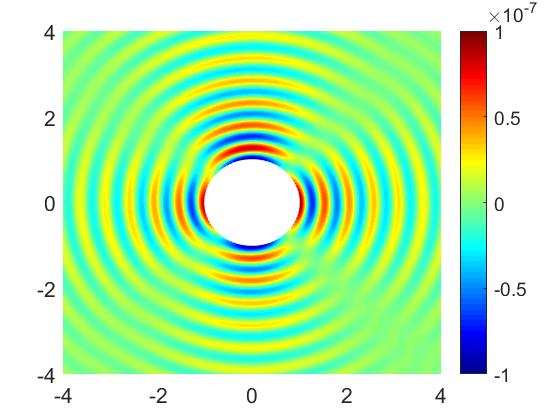} &
		\includegraphics[scale=0.25]{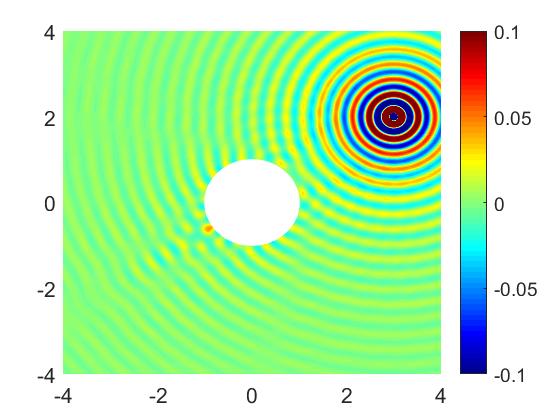} \\
		(e) Re($u_3^{num}$) & (f) Re($p^{num}$) \\
	\end{tabular}
	\caption{Real parts of the total field $U$ on an $x_3=0$ section for the scattering of an incident point source by the obstacle Fig.~\ref{scatter}(b).}
	\label{dscattering}
\end{figure}

\begin{figure}[htb]
	\centering
	\begin{tabular}{cc}
		\includegraphics[scale=0.25]{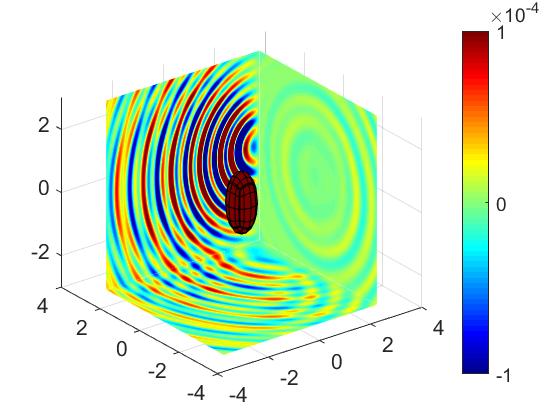} &
		\includegraphics[scale=0.25]{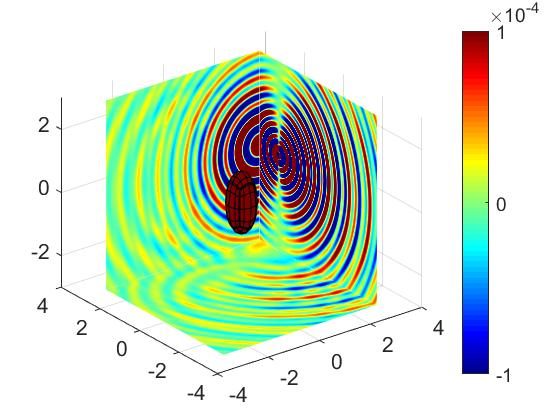} \\
		(a) Re($u_1^{num}$) & (b) Re($u_2^{num}$)  \\
		\includegraphics[scale=0.25]{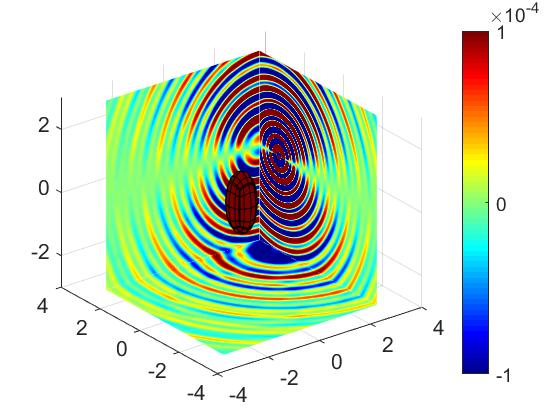} &
		\includegraphics[scale=0.25]{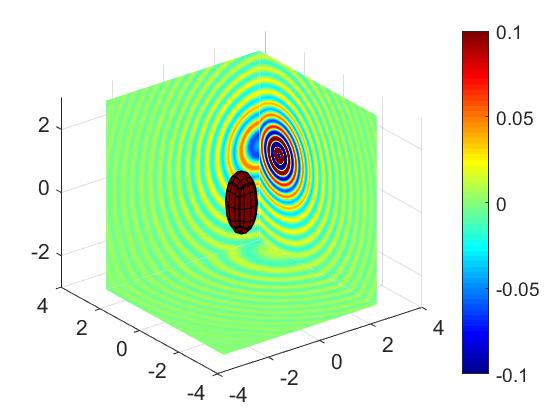} \\
		(e) Re($u_3^{num}$) & (f) Re($p^{num}$) \\
	\end{tabular}
	\caption{Real parts of the total field $U$ around the obstacle for the scattering of an incident point source by the obstacle Fig.~\ref{scatter}(b).}
	\label{scattering}
\end{figure}

\section*{Acknowledgement}
The work of LWX is supported by an NSFC Grant (No.12071060). The work of TY is supported by an NSFC Grant (No. 12171465).

\appendix
\section*{Appendix. Regularized expressions of the strongly-singular and hyper-singular operators and proofs.}
\renewcommand{\theequation}{A.\arabic{equation}}

This appendix presents the main approach for deriving the regularized formulations of the strongly-singular and hyper-singular operators. Analogous technique has been shown in \cite{BXY191} for the three-dimensional elastic and thermoelastic problems. But the derivations for three-dimensional poroelastic problem are more complex and thus, we present the full proof to make this appendix individually readable.

Given the G\"{u}nter derivative operator
\ben
M(\partial,\nu)u(x)=\partial_\nu u-\nu(\nabla  \cdot  u)+\nu  \times {\rm{curl}}\, u,
\enn
we can rewrite the traction operator $T(\partial ,\nu )$ as
\ben
T(\partial ,\nu )u(x) = (\lambda  + \mu )\nu (\nabla  \cdot u) + \mu {\partial _\nu }u + \mu M(\partial ,\nu )u.
\enn
Then
\ben
\label{td}
&&T(\partial ,\nu )\nabla=(\lambda+\mu)\nu\Delta+\mu\partial_\nu\nabla+\mu M(\partial,\nu)\nabla\\
&&=(\lambda+\mu)\nu\Delta+\mu\partial_\nu\nabla-\mu M(\partial,\nu)\nabla+2\mu M(\partial,\nu)\nabla,
\enn
together with
\be
{\partial _\nu }\nabla  - M(\partial ,\nu )\nabla  = \nu \Delta,
\label{eq.G1}
\en
imply that
\be
T(\partial ,\nu )\nabla  = (\lambda  + 2\mu )\nu \Delta  + 2\mu M(\partial ,\nu )\nabla.
\label{eq.G2}
\en
Letting $M(\partial_x,\nu_x)=\left[m_x^{ij}\right]^3_{i,j=1}$, it can be known that
\ben
m_x^{ij}=\partial_{x_i}\nu_x^j-\partial_{x_j}\nu_x^i=-m_x^{ji} \quad \mbox {for} \quad i,j=1,2,3.
\enn
Therefore, we have the following properties of the operator $M(\partial,\nu)$~\cite{HW08}. For any scalar fields $p$, $q$, vector fields $u$, $v$ and tensor field $\Pi$, there hold the Stokes formulas
\be
\label{s1}
\int_\Gamma(m^{ij}p)qds&=&-\int_\Gamma p(m^{ij}q) ds, \\
\label{s2}
\int_\Gamma(Mu)\cdot vds&=&\int_\Gamma u\cdot(Mv) ds, \\
\label{s3}
\int_\Gamma(Mq) vds&=&-\int_\Gamma q(Mv) ds, \\
\label{s4}
\int_\Gamma(M\Pi)^\top vds&=&\int_\Gamma \Pi^\top(Mv) ds.
\en
We first have the following result~\cite[Lemma 4.2]{BXY191}. For $x\ne y$, it follows that
\be
\label{Tx}
&&T(\partial_x,\nu_x)E_{11}(x,y)\nonumber\\
&&=-\nu_x\nabla_x^\top\left[\gamma_{k_s}(x,y)-\gamma_{k_1}(x,y)\right]+\frac{k_2^2-q}{k_1^2-k_2^2}\nu_x\nabla_x^\top\left[\gamma_{k_1}(x,y)-\gamma_{k_2}(x,y)\right]\nonumber\\
&&\quad+\partial_{\nu_x}\gamma_{k_s}(x,y)I+M(\partial_x,\nu_x)\left[2\mu E_{11}(x,y)-\gamma_{k_s}I\right],
\en
and
\be
\label{Ty}
&&T(\partial_y,\nu_y)E_{11}(x,y)\nonumber\\
&&=-\nu_y\nabla_y^\top\left[\gamma_{k_s}(x,y)-\gamma_{k_1}(x,y)\right]+\frac{k_2^2-q}{k_1^2-k_2^2}\nu_y\nabla_y^\top\left[\gamma_{k_1}(x,y)-\gamma_{k_2}(x,y)\right]\nonumber\\
&&\quad+\partial_{\nu_y}\gamma_{k_s}(x,y)I+M(\partial_y,\nu_y)\left[2\mu E_{11}(x,y)-\gamma_{k_s}I\right].
\en

With these identities, the proofs of Theorems~\ref{regK}-\ref{regN4} can be established.
\begin{proof}
The operator $K$ can be written as
\ben
K(U)(x)&=&  \int_\Gamma (\widetilde T^*(\partial _y,\nu _y)E(x,y))^\top U(y)ds_y \\
&=&\begin{bmatrix}
	K_1& K_2\\
	K_3 & K_4
\end{bmatrix}\begin{bmatrix}
	u\\
	p
\end{bmatrix}(x),
\enn
where
\ben
&&K_1(u)(x)=\int_\Gamma\left((T(\partial_y,\nu_y)E_{11}(x,y))^\top-\frac{\rho_f\omega^2\alpha}{\beta}E_{21}(x,y)\nu_y^\top\right)u(y)ds_y,\\
&&K_2(p)(x)=\int_\Gamma\left(-\beta E_{11}(x,y)\nu_y+\partial_{\nu_y}E_{21}(x,y)\right)p(y)ds_y,\\
&&K_3(u)(x)=\int_\Gamma \left((T(\partial_y,\nu_y)E_{12}(x,y))^\top-\frac{\rho_f\omega^2\alpha}{\beta}E_{22}(x,y)\nu_y^\top\right)u(y)ds_y,\\
&&K_4(p)(x)=\int_\Gamma\left(-\beta E_{12}^\top(x,y)\nu_y+\partial_{\nu_y}E_{22}(x,y)\right)p(y)ds_y.
\enn
Using (\ref{s4}) and (\ref{Ty}), we have
\be
\label{TE11T}
&&\quad\int_\Gamma(T(\partial_y,\nu_y)E_{11}(x,y))^\top u(y)ds_y\nonumber\\
&&=-\int_\Gamma\nabla_y\left[(\gamma_{k_s}(x,y)-\gamma_{k_1}(x,y))-\frac{k_2^2-q}{k_1^2-k_2^2}(\gamma_{k_1}(x,y)-\gamma_{k_2}(x,y))\right]\nu_y^\top u(y)ds_y\nonumber\\
&&\quad+\int_\Gamma\partial_{\nu_y}\gamma_{k_s}(x,y)Iu(y)ds_y+\int_\Gamma  \left[2\mu E_{11}(x,y) - \gamma _{k_s}(x,y)I\right] M(\partial_y,\nu_y)u(y)ds_y.
\en
Then we can obtain that
\ben
&&\quad K_1(u)(x)\\
&&=-\int_\Gamma\nabla_y\left[(\gamma_{k_s}(x,y)-\gamma_{k_1}(x,y))-\frac{k_2^2-q}{k_1^2-k_2^2}(\gamma_{k_1}(x,y)-\gamma_{k_2}(x,y))\right]\nu_y^\top u(y)ds_y\\
&&\quad+\int_\Gamma \left[\partial_{\nu _y}\gamma_{k_s}(x,y)I-\frac{\rho_f\omega^2\alpha}{\beta}E_{21}(x,y)\nu_y^\top\right]u(y)ds_y\\
&&\quad+\int_\Gamma\partial_{\nu_y}\gamma_{k_s}(x,y)Iu(y)ds_y+\int_\Gamma  \left[2\mu E_{11}(x,y) - \gamma _{k_s}(x,y)I\right] M(\partial_y,\nu_y)u(y)ds_y.
\enn
It follows from (\ref{eq.G1}) and (\ref{s3}) that
\be
&&\quad\int_\Gamma\partial_{\nu_y}E_{21}(x,y)p(y)ds_y\nonumber\\
&&=-\frac{\alpha-\beta}{(\lambda+2\mu)(k_1^2-k_2^2)}\int_\Gamma\partial_{\nu_y}\nabla_x(\gamma_{k_1}(x,y)-\gamma_{k_2}(x,y))p(y)ds_y\nonumber\\
&&=\frac{\alpha-\beta}{(\lambda+2\mu)(k_1^2-k_2^2)}\int_\Gamma\partial_{\nu_y}\nabla_y(\gamma_{k_1}(x,y)-\gamma_{k_2}(x,y))p(y)ds_y\nonumber\\
&&=\frac{\alpha-\beta}{(\lambda+2\mu)(k_1^2-k_2^2)}\int_\Gamma\left(M(\partial_y,\nu_y)\nabla_y+\nu_y\Delta_y\right)(\gamma_{k_1}(x,y)-\gamma_{k_2}(x,y))p(y)ds_y\nonumber\\
&&=-\frac{\alpha-\beta}{(\lambda+2\mu)(k_1^2-k_2^2)}\int_\Gamma(k_1^2\gamma_{k_1}(x,y)-k_2^2\gamma_{k_2}(x,y))\nu_yp(y)ds_y\nonumber\\
&&\quad-\frac{\alpha-\beta}{(\lambda+2\mu)(k_1^2-k_2^2)}\int_\Gamma M(\partial_y,\nu_y)p(y)\nabla_y(\gamma_{k_1}(x,y)-\gamma_{k_2}(x,y))ds_y\nonumber\\
&&=-\frac{\alpha-\beta}{(\lambda+2\mu)(k_1^2-k_2^2)}\int_\Gamma(k_1^2\gamma_{k_1}(x,y)-k_2^2\gamma_{k_2}(x,y))\nu_yp(y)ds_y\nonumber\\
&&\quad+\frac{\alpha-\beta}{(\lambda+2\mu)(k_1^2-k_2^2)}\left\{ {\int_\Gamma\nabla_y^\top(\gamma_{k_1}(x,y)-\gamma_{k_2}(x,y))M(\partial_y,\nu_y)p(y)}ds_y \right\}^\top,
\label{pyE21}
\en
which implies that
\ben
&&\quad K_2(p)(x)\\
&&=-\int_\Gamma  \left[\frac{\alpha-\beta }{(k_1^2 - k_2^2)(\lambda+2\mu)} (k_1^2\gamma _{k_1}(x,y) - k_2^2\gamma _{k_2}(x,y)) + \beta E_{11}(x,y)\right]\nu _yp(y)ds_y\\
&&\quad+\frac{\alpha-\beta}{(\lambda+2\mu)(k_1^2-k_2^2)}\left\{ {\int_\Gamma\nabla_y^\top(\gamma_{k_1}(x,y)-\gamma_{k_2}(x,y))M(\partial_y,\nu_y)p(y)}ds_y \right\}^\top.
\enn
From (\ref{eq.G2}) and (\ref{s2}), we can obtain that
\be
&&\quad\int_\Gamma(T(\partial_y,\nu_y)E_{12}(x,y))^\top u(y)ds_y\nonumber\\
&&=\frac{i\omega\gamma}{(\lambda+2\mu)(k_1^2-k_2^2)}\int_\Gamma(T(\partial_y,\nu_y)\nabla_x(\gamma_{k_1}(x,y)-\gamma_{k_2}(x,y)))^\top u(y)ds_y\nonumber\\
&&=-\frac{i\omega\gamma}{(\lambda+2\mu)(k_1^2-k_2^2)}\int_\Gamma(T(\partial_y,\nu_y)\nabla_y(\gamma_{k_1}(x,y)-\gamma_{k_2}(x,y)))^\top u(y)ds_y\nonumber\\
&&=-\frac{i\omega\gamma}{k_1^2-k_2^2}\int_\Gamma\Delta_y(\gamma_{k_1}(x,y)-\gamma_{k_2}(x,y))\nu_y^\top u(y)ds_y\nonumber\\
&&\quad-\frac{2i\mu\omega\gamma}{(\lambda+2\mu)(k_1^2-k_2^2)}\int_\Gamma (M(\partial_y,\nu_y)\nabla_y(\gamma_{k_1}(x,y)-\gamma_{k_2}(x,y)))^\top u(y)ds_y\nonumber\\
&&=\frac{i\omega\gamma}{k_1^2-k_2^2}\int_\Gamma(k_1^2\gamma_{k_1}(x,y)-k_2^2\gamma_{k_2}(x,y))\nu_y^\top u(y)ds_y\nonumber\\
&&\quad-\frac{2i\mu\omega\gamma}{(\lambda+2\mu)(k_1^2-k_2^2)}\int_\Gamma\nabla_y^\top(\gamma_{k_1}(x,y)-\gamma_{k_2}(x,y))M(\partial_y,\nu_y)u(y)ds_y,
\label{TyE12T}
\en
which yields
\ben
&\quad&K_3(u)(x)\\
&=&\int_\Gamma  \left[  \frac{i\omega\gamma}{(k_1^2-k_2^2)}(k_1^2\gamma _{k_1}(x,y) - k_2^2\gamma_{k_2}(x,y))-\frac{\rho_f\omega^2\alpha}{\beta}E_{22}\right]\nu _y^\top u(y)ds_y\\
&\quad&-\frac{2i\mu\omega\gamma}{(\lambda+2\mu)(k_1^2-k_2^2)}\int_\Gamma\nabla_y^\top(\gamma_{k_1}(x,y)-\gamma_{k_2}(x,y))M(\partial_y,\nu_y)u(y)ds_y.
\enn
The formula for $K_4(p)(x)$ can be obtained directly from its definition and this completes the proof of Theorem~{\ref{regK}}.

The operator $K'$ have the following form
\ben
K'(U)(x)= \begin{bmatrix}
	K_1'& K_2'\\
	K_3' & K_4'
\end{bmatrix}\begin{bmatrix}
	u\\
	p
\end{bmatrix}(x),\quad x\in\Gamma,
\enn
where the operators $K'_j,j=1,\cdots,4$ are denoted as
\ben
&&K'_1(u)(x)=\int_\Gamma  \left(T(\partial_x,\nu_x)E_{11}-\alpha \nu_x E_{12}^\top\right) u(y)ds_y, \\
&&K'_2(p)(x)=\int_\Gamma  \left(T(\partial_x,\nu_x)E_{21}-\alpha \nu_x E_{22}\right) p(y)ds_y, \\
&&K'_3(u)(x)=\int_\Gamma  \left(-\rho_f\omega^2\nu_x^\top E_{11}+ \partial_{\nu_x}E_{12}^\top\right)u(y)ds_y, \\
&&K'_4(p)(x)=\int_\Gamma  \left(-\rho_f\omega^2\nu_x^{\top}E_{21} +\partial_{\nu_x}E_{22}\right)p(y)ds_y.
\enn
From (\ref{Tx}), it can be obtained that
\ben
&&\quad K'_1(u)(x)\\
&&= -\int_\Gamma \nu _x\nabla _x^\top \left[(\gamma _{k_s}(x,y) - \gamma _{k_1}(x,y)) - \frac{k_2^2 - q}{k_1^2 - k_2^2}(\gamma _{k_1}(x,y) - \gamma _{k_2}(x,y))\right]u(y)ds_y\\
&&\quad + \int_\Gamma  \left[\partial _{\nu _x}\gamma _{k_s}(x,y)I - \alpha \nu _xE_{12}^\top(x,y) +M(\partial_x,\nu_x)(2\mu E_{11}(x,y) - \gamma _{k_s}(x,y)I) \right]u(y)ds_y.
\enn
It follows from (\ref{eq.G2}) that
\be
&&\quad \int_\Gamma T(\partial_x,\nu_x)E_{21}(x,y)p(y)ds_y\nonumber\\
&&=-\frac{\alpha-\beta}{(\lambda+2\mu)(k_1^2-k_2^2)}\int_\Gamma T(\partial_x,\nu_x)\nabla_x (\gamma_{k_1}(x,y)-\gamma_{k_2}(x,y))p(y)ds_y\nonumber\\
&&=-\frac{\alpha-\beta}{(k_1^2-k_2^2)}\int_\Gamma\nu_x \Delta_x (\gamma_{k_1}(x,y)-\gamma_{k_2}(x,y))p(y)ds_y\nonumber\\
&&\quad -\frac{2\mu(\alpha-\beta)}{(\lambda+2\mu)(k_1^2-k_2^2)} \int_\Gamma M(\partial_x ,\nu_x)\nabla_x(\gamma_{k_1}(x,y)-\gamma_{k_2}(x,y))p(y)ds_y\nonumber\\	&&=\frac{\alpha-\beta}{(k_1^2-k_2^2)}\int_\Gamma(k_1^2\gamma_{k_1}(x,y) -k_2^2\gamma_{k_2}(x,y))\nu_xp(y)ds_y\nonumber\\
&&\quad -\frac{2\mu(\alpha-\beta)}{(\lambda+2\mu)(k_1^2-k_2^2)} \int_\Gamma M(\partial_x,\nu_x)\nabla_x(\gamma_{k_1}(x,y)-\gamma_{k_2}(x,y))p(y)ds_y.
\label{TE21}
\en
Therefore,
\ben
&&K'_2(p)(x)\\
&&=\int_\Gamma  \left(\frac{\alpha-\beta}{k_1^2 - k_2^2} (k_1^2\gamma _{k_1}(x,y)-k_2^2\gamma _{k_2}(x,y)) - \alpha E_{22}(x,y)\right)\nu_xp(y)ds_y\\
&&\quad-\frac{2\mu(\alpha-\beta)}{(\lambda+2\mu)(k_1^2-k_2^2)}\int_\Gamma M(\partial_x,\nu_x)\nabla_x(\gamma_{k_1}(x,y)-\gamma_{k_2}(x,y))p(y)ds_y.
\enn
On the other hand, we obtain from (\ref{eq.G1}) that
\be
&&\quad\int_\Gamma\partial_{\nu_x}E_{12}^\top(x,y)u(y)ds_y\nonumber\\
&&=\frac{i\omega\gamma}{(\lambda+2\mu)(k_1^2-k_2^2)}\int_\Gamma\partial_{\nu_x}\nabla_x^\top\left(\gamma_{k_1}(x,y)-\gamma_{k_2}(x,y)\right)u(y)ds_y\nonumber\\
&&=\frac{i\omega \gamma}{(\lambda+2\mu)(k_1^2-k_2^2)}\int_\Gamma\nu_x^\top \Delta_x\left(\gamma_{k_1}(x,y)-\gamma_{k_2}(x,y)\right)u(y)ds_y\nonumber\\
&&\quad+\frac{i\omega \gamma}{(\lambda+2\mu)(k_1^2-k_2^2)}\int_\Gamma\left\{M(\partial_x,\nu_x)\nabla_x \left(\gamma_{k_1}(x,y)-\gamma_{k_2}(x,y)\right) \right\}^\top u(y)ds_y\nonumber\\
&&=-\frac{i\omega\gamma}{(\lambda+2\mu)(k_1^2-k_2^2)}\int_\Gamma\left(k_1^2\gamma_{k_1}(x,y)-k_2^2\gamma_{k_2}(x,y)\right)\nu_x^\top u(y)ds_y\nonumber\\
&&\quad+\frac{i\omega\gamma}{(\lambda+2\mu)(k_1^2-k_2^2)}\int_\Gamma\left\{M(\partial_x,\nu_x)\nabla_x \left(\gamma_{k_1}(x,y)-\gamma_{k_2}(x,y)\right) \right\}^\top u(y)ds_y.
\label{parE12}
\en
Letting $R_1(x,y)=\left(\gamma_{k_1}(x,y)-\gamma_{k_2}(x,y)\right)$, note that
\ben
&&\quad\left\{M(\partial_x,\nu_x)\nabla_xR_1(x,y) \right\}^\top u(y)\\
&&=\left\{ \begin{bmatrix}
	m_x^{11}&m_x^{12}&m_x^{13}\\
	m_x^{21}&m_x^{22}&m_x^{23}\\
	m_x^{31}&m_x^{32}&m_x^{33}
\end{bmatrix}\left( {\begin{array}{*{20}{c}}
	\partial_{x_1}R_1(x,y)\\
	\partial_{x_2}R_1(x,y)\\
	\partial_{x_3}R_1(x,y)
\end{array}} \right) \right\}^\top \left( {\begin{array}{*{20}{c}}
u_1(y)\\
u_2(y)\\
u_3(y)
\end{array}} \right)\\
&&=\sum\limits_{i,j=1}^{3}m_x^{ij}\partial_{x_j}\left(\gamma_{k_1}(x,y)-\gamma_{k_2}(x,y)\right)u_i(y)\\
&&=\begin{bmatrix}
	m_x^{11}&m_x^{12}&m_x^{13}\\
	m_x^{21}&m_x^{22}&m_x^{23}\\
	m_x^{31}&m_x^{32}&m_x^{33}
\end{bmatrix}:\left( {\begin{array}{*{20}{c}}
	\partial_{x_1}R_1(x,y)u_1(y)&\partial_{x_2}R_1(x,y)u_1(y)&\partial_{x_3}R_1(x,y)u_1(y)\\
	\partial_{x_2}R_1(x,y)u_2(y)&\partial_{x_2}R_1(x,y)u_2(y)&\partial_{x_3}R_1(x,y)u_2(y)\\
	\partial_{x_3}R_1(x,y)u_3(y)&\partial_{x_2}R_1(x,y)u_3(y)&\partial_{x_3}R_1(x,y)u_3(y)
\end{array}} \right)\\
&&=M(\partial_x,\nu_x):\left(u(y)\nabla_x^\top R_1(x,y)\right).
\enn
Then we have
\ben
&&\quad K^{\prime}_3(u)(x)\\
&&= -\int_\Gamma\left[\rho_f\omega^2\nu_x^\top E_{11}(x,y) +\frac{i\omega\gamma}{(\lambda+2\mu)(k_1^2-k_2^2)}(k_1^2\gamma _{k_1}(x,y) - k_2^2\gamma _{k_2}(x,y))\nu _x^\top\right] u(y)ds_y\\
&&\quad+\frac{i\omega\gamma}{(k_1^2-k_2^2)(\lambda+2\mu)}M(\partial_x,\nu_x):\int_\Gamma u(y)\nabla_x^\top\left[\gamma _{k_1}(x,y) - \gamma_{k_2}(x,y))\right] ds_y.\\
\enn
Using the definition, the formula for $K_4'$ can be obtained directly. Then the Theorem~(\ref{regKP}) can be proved.

Now we investigate the hyper-singular operator $N$. Note that the hyper-singular operator of $N$ can be written as
\ben
N(\psi)(x)=\begin{bmatrix}
	N_1 & N_2\\
	N_3 & N_4
\end{bmatrix}\begin{bmatrix}
	u\\
	p
\end{bmatrix}(x),\quad x\in\Gamma,
\enn
where
\ben
N_1(u)(x) &=&\int_\Gamma\left[T(\partial _x,\nu _x)(T(\partial _y,\nu _y)E_{11}(x,y))^\top - \frac{\rho_f\omega^2\alpha}{\beta} T(\partial _x,\nu _x)E_{21}(x,y)\nu _y^\top \right]u(y)ds_y\\
&&-\int_\Gamma\left[\alpha\nu _x(T(\partial _y,\nu _y)E_{12}(x,y))^\top - \frac{\rho_f\omega^2\alpha^2}{\beta}E_{22}(x,y)\nu _x\nu _y^\top \right]u(y)ds_y,\\
N_2(p)(x) &=&-\int_\Gamma\left[\beta T(\partial _x,\nu _x)E_{11}(x,y)\nu _y - \partial_{\nu _y}T(\partial _x,\nu _x)E_{21}(x,y) \right]p(y)ds_y\\
&&+\int_\Gamma\left[\alpha\beta\nu_xE_{12}^\top\nu _y-\alpha\partial_{\nu_y}E_{22}(x,y)\nu_x \right]p(y)ds_y,\\
N_3(u)(x)  &=& -\int_\Gamma\left[\rho_f\omega^2\nu_x^\top(T(\partial _y,\nu_y)E_{11}(x,y))^\top-\frac{\rho_f^2\omega^4\alpha}{\beta}\nu_x^\top E_{21}\nu_y^\top \right]u(y)ds_y\\
&&+\int_\Gamma\left[\partial_{\nu_x}(T(\partial_y,\nu_y)E_{12}(x,y))^\top-\frac{\rho_f\omega^2\alpha}{\beta}\partial_{\nu_x}E_{22}(x,y)\nu_y^\top\right]u(y)ds_y,\\
N_4(p)(x) &=&\int_\Gamma\left[\rho_f\omega^2\beta\nu _x^\top E_{11}(x,y)\nu_y -\rho_f\omega^2\nu _x^\top\partial_{\nu _y}E_{21}(x,y) \right]p(y)ds_y\\
&&-\int_\Gamma\left[\beta\partial_{\nu_x}E_{12}^\top(x,y)\nu_y-\partial_{\nu_x}(\partial_{\nu_y}E_{22}(x,y)) \right]p(y)ds_y.
\enn

Considering the following term
\ben
T(\partial_x,\nu_x)\int_\Gamma(T(\partial_y,\nu_y)E_{11}(x,y))^\top u(y)ds_y,
\enn
we first set that
\ben
&&f_1(x)=\int_\Gamma\nabla_y\left(\gamma_{k_s}(x,y)-\gamma_{k_1}(x,y)\right)\nu_y^\top u(y)ds_y,\\
&&f_2(x)=\int_\Gamma\nabla_y\left(\gamma_{k_1}(x,y)-\gamma_{k_2}(x,y)\right)\nu_y^\top u(y)ds_y,\\
&&f_3(x)=\int_\Gamma\partial_{\nu_y}\gamma_{k_s}(x,y) u(y)ds_y,\\
&&f_4(x)=\int_\Gamma\left(2\mu E_{11}(x,y)-\gamma_{k_s}(x,y)I\right)M(\partial_y,\nu_y)u(y)ds_y,
\enn
and
\ben
g_i(x)=\mu\partial_{\nu_x}f_i(x) + (\lambda + \mu )\nu_x\nabla_x\cdot f_i(x) + \mu M(\pa_x,\nu_x)f_i(x).
\enn
Thus, we can obtain from (\ref{eq.G1}) and (\ref{eq.G2}) that
\be
g_1(x) &=&(\lambda+2\mu)\int_\Gamma\left[k_s^2\gamma_{k_s}(x,y)-k_1^2\gamma_{k_1}(x,y)\right]\nu_x\nu _y^\top u(y)d{s_y}\nonumber\\
&& + 2\mu \int_\Gamma M(\partial_x,\nu_x)\nabla_y\left[\gamma_{k_s}(x,y) - \gamma _{k_1}(x,y)\right]\nu _y^\top u(y)ds_y,
\label{eq.g1}
\en
and
\be
g_2(x) &=&(\lambda+2\mu)\int_\Gamma\left[k_1^2\gamma_{k_1}(x,y)-k_2^2\gamma_{k_2}(x,y)\right]\nu_x\nu _y^\top u(y)ds_y\nonumber\\
&&+2\mu \int_\Gamma M(\partial_x,\nu_x)\nabla_y\left[\gamma _{k_1}(x,y)-\gamma_{k_2}(x,y)\right]\nu _y^\top u(y)ds_y.
\label{eq.g2}
\en
Relying on the results of the Helmholtz equation, we have
\be
&&\quad\int_\Gamma\partial_{\nu_x}\partial_{\nu_y}\gamma_{k_s}(x,y)u(y)ds_y\nonumber\\
&&=\int_\Gamma(\nu_x\times \nabla_x\gamma_{k_s}(x,y))\cdot (\nu_y\times \nabla_y u)(x)ds_y+k_s^2\int_\Gamma\gamma_{k_s}(x,y)\nu_x^\top\nu_yu(y)ds_y
\label{ppgks}
\en
Therefore, $g_3(x)$ can be expressed as
\be
g_3(x)&=&\mu\int_\Gamma \partial_{\nu _x}\partial_{\nu _y}\gamma_{k_s}(x,y)u(y) ds_y+(\lambda+\mu )\int_\Gamma\nu_x\nabla_x^\top\partial_{\nu_y}\gamma _{k_s}(x,y)u(y)ds_y \nonumber\\
&&+ \mu \int_\Gamma M(\partial_x,\nu_x)\partial_{\nu _y}\gamma _{k_s}(x,y)u(y)ds_y \nonumber\\
&=&\mu \int_\Gamma(\nu_x\times \nabla_x\gamma_{k_s}(x,y))\cdot (\nu_y\times \nabla_yu(y)) ds_y + \mu k_s^2\int_\Gamma\gamma_{k_s}(x,y)\nu_x^\top\nu_yu(y)ds_y\nonumber\\
&&+(\lambda+\mu )\int_\Gamma\nu_x\nabla_x^\top\partial_{\nu_y}\gamma _{k_s}(x,y)u(y)ds_y + \mu \int_\Gamma M(\partial_x,\nu_x)\partial_{\nu _y}\gamma _{k_s}(x,y)u(y)ds_y.
\label{eq.g3}
\en
For $g_4(x)$, we know from (\ref{Tx}) that
\be
g_4(x)  &=& \mu\int_\Gamma\nu_x^\top\nabla_x\gamma_{k_s}(x,y) M(\partial_y,\nu_y)ds_y\nonumber\\
&&-2\mu\int_\Gamma\nu_x\nabla_x^\top\left[\gamma_{k_s}(x,y)
-\gamma_{k_1}(x,y)\right] M(\partial_y,\nu_y)u(y)ds_y\nonumber\\
&&+\frac{2\mu(k_1^2-q)}{k_1^2-k_2^2}\int_\Gamma\nu_x\nabla_x^T\left[\gamma _{k_1}(x,y) - \gamma_{k_2}(x,y)\right] M(\partial_y,\nu_y)u(y)ds_y\nonumber\\
&& + 4\mu^2\int_\Gamma M(\partial_x,\nu_x) E_{11}(x,y) M(\partial_y,\nu_y)u(y)ds_y\nonumber\\
&& - 3\mu  \int_\Gamma M(\partial_x,\nu_x)\gamma_{k_s}(x,y)M(\partial_y,\nu_y)u(y)ds_y\nonumber\\
&& - (\lambda  + \mu )\int_\Gamma\nu_x\nabla_x^\top\gamma_{k_s}(x,y)M(\partial_y,\nu_y)u(y)ds_y.
\label{eq.g4}
\en
Therefore, (\ref{eq.g1})-(\ref{eq.g4}) yields
\be
&&\quad T(\partial_x,\nu_x)\int_\Gamma(T(\partial_y,\nu_y)E_{11}(x,y))^\top u(y)ds_y\nonumber\\
&&=-g_1(x)+\frac{k_2^2-q}{k_1^2-k_2^2}g_2(x)+g_3(x)+g_4(x)\nonumber\\
&&=-(\rho-\beta\rho_f)\omega^2\int_\Gamma\gamma_{k_s}(x,y)(\nu_x\nu_y^\top - \nu_x^\top\nu_yI )u(y)ds_y\nonumber\\
&&\quad+\int_\Gamma\left[D_1\gamma_{k_1}(x,y) - D_2\gamma_{k_2}(x,y)\right] \nu_x\nu _y^\top u(y)ds_y\nonumber\\
&&\quad+ \mu\int_\Gamma(\nu_x\times \nabla_x\gamma_{k_s}(x,y))\cdot (\nu_y\times \nabla_yu(y))ds_y\nonumber\\
&&\quad+ 4\mu^2\int_\Gamma M(\partial_x,\nu_x) E_{11}(x,y) M(\partial_y,\nu_y)u(y)ds_y\nonumber\\
&&\quad- 3\mu  \int_\Gamma M(\partial_x,\nu_x)\gamma_{k_s}(x,y)M(\partial_y,\nu_y)u(y)ds_y\nonumber\\
&&\quad-2\mu\int_\Gamma\nu_x\nabla_x^\top\left[\gamma_{k_s}(x,y) - \gamma _{k_1}(x,y)\right] M(\partial_y,\nu_y)u(y)ds_y\nonumber\\
&&\quad- 2\mu \int_\Gamma M(\partial_x,\nu_x)\nabla_y\left[\gamma_{k_s}(x,y) - \gamma _{k_1}(x,y)\right] \nu _y^\top u(y)ds_y\nonumber\\
&&\quad+\frac{2\mu(k_2^2 - q)}{k_1^2 - k_2^2}\int_\Gamma \nu_x\nabla _x^\top\left[\gamma_{k_1}(x,y) - \gamma_{k_2}(x,y)\right] M(\partial_y,\nu_y)u(y)ds_y\nonumber
\en
\be
&&\quad+ \frac{2\mu(k_2^2 - q)}{k_1^2 - k_2^2}\int_\Gamma  M(\partial_x,\nu_x)\nabla _y\left[\gamma_{k_1}(x,y) - \gamma_{k_2}(x,y)\right]\nu_y^\top u(y)ds_y\nonumber\\
&&\quad+\mu \int_\Gamma M(\partial_x,\nu_x)\partial_{\nu _y}\gamma _{k_s}(x,y)u(y)ds_y+\mu\int_\Gamma\partial_{\nu _x}\gamma_{k_s}(x,y) M(\partial_y,\nu_y)ds_y,
\label{eq.N11}
\en
with
\be
\nonumber
D_1=\frac{k_1^2(\lambda + 2\mu )(k_1^2 - q)}{k_1^2 - k_2^2},\quad D_2=\frac{k_2^2(\lambda  + 2\mu )(k_2^2 - q)}{k_1^2 - k_2^2}.
\en
On the other hand, we obtain from (\ref{eq.G2}) that
\be
&&\quad\int_\Gamma T(\partial_x,\nu_x)E_{21}(x,y)\nu_y^\top u(y)ds_y\nonumber\\
&&= \frac{\alpha-\beta}{k_1^2 - k_2^2}\int_\Gamma\left[k_1^2\gamma_{k_1}(x,y)- k_2^2\gamma_{k_2}(x,y)\right] \nu_x\nu_y^\top u(y)ds_y\nonumber\\
&&\quad+\frac{2\mu(\alpha-\beta)}{(\lambda  + 2\mu )(k_1^2 - k_2^2)}\int_\Gamma M(\partial_x,\nu_x) \nabla_y\left[\gamma_{k_1}(x,y) - \gamma _{k_2}(x,y)\right]\nu_y^\top u(y)ds_y,
\label{eq.N12}
\en
and
\be
&&\quad\int_\Gamma\nu _x(T(\partial_y,\nu_y)E_{12}(x,y))^\top u(y)ds_y\nonumber\\
&&= \frac{i\omega\gamma}{k_1^2 - k_2^2}\int_\Gamma\left[k_1^2\gamma_{k_1}(x,y) - k_2^2\gamma_{k_2}(x,y)\right]\nu_x\nu_y^\top u(y)ds_y\nonumber\\
&&\quad+\frac{2i\omega\mu\gamma}{(\lambda  + 2\mu )(k_1^2 - k_2^2)}\int_\Gamma\nu_x\nabla_x^\top(\gamma_{k_1}(x,y) - \gamma _{k_2}(x,y)) M(\partial_y,\nu_y)ds_y.
\label{eq.N13}
\en
Combining (\ref{eq.N11})-(\ref{eq.N13}), we have
\ben
&&\quad N_1(u)(x)\\
&&=-(\rho-\beta\rho_f) \omega^2\int _\Gamma \gamma_{k_s}\left(x,y\right)\left(\nu_x\nu_y^\top -\nu^\top_x\nu_yI\right)u(y)ds_{y}\\
&&\quad+\int_\Gamma\left[C_{1}\gamma_{k_{1}}(x,y)-C_{2}\gamma_{k_{2}}(x,y)\right] \nu_{x}\nu_{y}^{\top}u(y)ds_{y},\\	
&&\quad+\int_{\Gamma}M(\partial_x,\nu_x)\left[4\mu^2E_{11}(x,y)-3\mu\gamma_{k_s}(x,y)I\right]M(\partial_y,\nu_y)u(y)ds_y\\
&&\quad+\mu\int_\Gamma\tau_2 \gamma_{k_s}(x,y)\tau_1u(y)ds_y\\
&&\quad+\int_\Gamma M(\partial_x,\nu_x)\nabla _y \left[-2\mu(\gamma_{k_s}(x,y)-\gamma_{k_1}(x,y) )+C_3(\gamma_{k_1}(x,y)-\gamma_{k_2}(x,y) )\right] \nu^\top_y u(y)ds_y\\
&&\quad+\mu\int_\Gamma M(\partial_x,\nu_x)\partial_{\nu_y}\gamma_{k_s}(x,y) u(y)ds_y\\
&&\quad+\int_\Gamma \nu_x\nabla _x^\top \left[-2\mu(\gamma_{k_s}(x,y)-\gamma_{k_1}(x,y) )+C_4(\gamma_{k_1}(x,y)-\gamma_{k_2}(x,y) )\right]M(\partial_y,\nu_y)u(y)ds_y\\
&&\quad+\mu\int_\Gamma\partial_{\nu_x}\gamma_{k_s}(x,y)M(\partial_y,\nu_y) u(y)ds_y.
\enn

For $N_2$, it follows from (\ref{eq.G1}) and (\ref{eq.G2}) that
\be
&&\quad \int_\Gamma \partial_{\nu_y}T(\partial_x,\nu_x)E_{21}(x,y)p(y)ds_y \nonumber\\
&&=-\frac{\alpha-\beta}{(\lambda+2\mu)(k^2_1-k^2_2)}\int_\Gamma \partial_{\nu_y}T(\partial_x,\nu_x)\nabla_x\left[\gamma_{k_1}(x,y)-\gamma_{k_2}(x,y)\right] p(y)ds_y\nonumber \\
&&=\frac{\alpha-\beta}{k^2_1-k^2_2}\int_\Gamma \partial_{\nu_y} \left[k_1^2\gamma_{k_1}(x,y)-k_2^2\gamma_{k_2}(x,y)\right]\nu_x p(y)ds_y \nonumber\\
&&\quad+\frac{2\mu(\alpha-\beta)}{(\lambda+2\mu)(k^2_1-k^2_2)}\int_\Gamma M(\partial_x,\nu_x)\partial_{\nu_y} \nabla _y\left[ \gamma_{k_1}(x,y) - \gamma _{k_2}(x,y) \right] p(y)ds_y \nonumber\\
&&=\frac{\alpha-\beta}{k^2_1-k^2_2}\int_\Gamma \partial_{\nu_y} \left[k_1^2\gamma_{k_1}(x,y)-k_2^2\gamma_{k_2}(x,y)\right]\nu_x p(y)ds_y \nonumber\\
&&\quad-\frac{2\mu(\alpha-\beta)}{(\lambda+2\mu)(k^2_1-k^2_2)}\int_\Gamma M(\partial_x,\nu_x)\left[k_1^2\gamma_{k_1}(x,y)-k_2^2\gamma_{k_2}(x,y)\right]\nu_y p(y)ds_y\nonumber \\
&&\quad+\frac{2\mu(\alpha-\beta)}{(\lambda+2\mu)(k^2_1-k^2_2)}\int_\Gamma M(\partial_x,\nu_x)M(\partial_y,\nu_y)\nabla _y\left[ \gamma_{k_1}(x,y) - \gamma _{k_2}(x,y) \right] p(y)ds_y\nonumber\\
&&=\frac{\alpha-\beta}{k^2_1-k^2_2}\int_\Gamma \partial_{\nu_y} \left[k_1^2\gamma_{k_1}(x,y)-k_2^2\gamma_{k_2}(x,y)\right]\nu_x p(y)ds_y\nonumber \\
&&\quad-\frac{2\mu(\alpha-\beta)}{(\lambda+2\mu)(k^2_1-k^2_2)}\int_\Gamma M(\partial_x,\nu_x)\left[k_1^2\gamma_{k_1}(x,y)-k_2^2\gamma_{k_2}(x,y)\right]\nu_y p(y)ds_y\nonumber \\
&&\quad+\frac{2\mu(\alpha-\beta)}{(\lambda+2\mu)(k^2_1-k^2_2)}\int_\Gamma M(\partial_x,\nu_x)\left\{ \nabla _y^\top\left[ \gamma_{k_1}(x,y) - \gamma _{k_2}(x,y) \right]M(\partial_y,\nu_y)p(y) \right\}^\top ds_y,
\en
which, in corporation with (\ref{Tx}), yields that
\ben
N_2(p)(x) &=&\beta \int_\Gamma  \left[\nu _x\nabla _x^\top(\gamma _{k_s}(x,y) - \gamma _{k_1}(x,y)) - \partial _{\nu _x}\gamma _{k_s}(x,y)I\right]\nu _yp(y) ds_y\\
&& +(\frac{i\omega\gamma\alpha\beta}{(k_1^2-k_2^2)(\lambda+2\mu)}-\frac{\beta(k_2^2-q)}{k_1^2-k_2^2})\int_\Gamma \nu _x\nabla _x^\top(\gamma _{k_1}(x,y) - \gamma _{k_2}(x,y))\nu _yp(y) ds_y\\
&& + \frac{\alpha-\beta }{k_1^2 - k_2^2}\int_\Gamma  \partial _{\nu _y}\left[(k_1^2\gamma _{k_1}(x,y) - k_2^2\gamma _{k_2}(x,y))\right] \nu _xp(y)ds_y\\
&& +\frac{\alpha }{k_1^2 - k_2^2}\int_\Gamma  \partial _{\nu _y}\left[(k_p^2 - k_1^2)\gamma _{k_1}(x,y)-(k_p^2 - k_2^2)\gamma _{k_2}(x,y) \right]\nu _xp(y)ds_y\\
&& +\frac{2\mu(\alpha-\beta)}{(\lambda+2\mu)(k^2_1-k^2_2)}\int_\Gamma M(\partial_x,\nu_x)\left\{ \nabla _y^\top\left[ \gamma_{k_1}(x,y) - \gamma _{k_2}(x,y) \right]M(\partial_y,\nu_y)p(y) \right\}^\top ds_y\\
&& - \beta \int_\Gamma M(\partial_x,\nu_x) \left[2\mu E_{11}(x,y)- \gamma _{k_s}(x,y)I)\right]\nu _yp(y)ds_y\\
&& -\frac{2\mu(\alpha-\beta)}{(\lambda+2\mu)(k^2_1-k^2_2)}\int_\Gamma M(\partial_x,\nu_x)\left[k_1^2\gamma_{k_1}(x,y)-k_2^2\gamma_{k_2}(x,y)\right]\nu_y p(y)ds_y.
\enn

For $N_3$, we mainly need to consider the following term
\be
&&\quad \int_\Gamma \partial_{\nu_x}\left(T(\partial_y,\nu_y)E_{12}(x,y)\right)^\top u(y)ds_y\nonumber \\
&&=\frac{i\omega\gamma}{k^2_1-k^2_2}\int_\Gamma \partial_{\nu _x}\left[k_1^2\gamma_{k_1}(x,y)-k_2^2\gamma_{k_2}(x,y)\right]\nu_y^\top u(y)ds_y\nonumber\\
&&\quad-\frac{2i\mu\omega\gamma}{(k^2_1-k^2_2)(\lambda+2\mu)}\int_\Gamma \left[k_1^2\gamma_{k_1}(x,y)-k_2^2\gamma_{k_2}(x,y)\right]\nu_x^\top M(\partial_y,\nu_y) u(y)ds_y\nonumber\\
&&\quad+\frac{2i\mu\omega\gamma}{(k^2_1-k^2_2)(\lambda+2\mu)}\int_\Gamma \left\{M(\partial_x,\nu_x)\nabla_x\left[\gamma_{k_1}(x,y)-\gamma_{k_2}(x,y)\right] \right\}^\top M(\partial_y,\nu_y) u(y)dsy\nonumber
\en
\be
&&=\frac{i\omega\gamma}{k^2_1-k^2_2}\int_\Gamma \partial_{\nu _x}\left[k_1^2\gamma_{k_1}(x,y)-k_2^2\gamma_{k_2}(x,y)\right]\nu_y^\top u(y)ds_y\nonumber\\
&&\quad-\frac{2i\mu\omega\gamma}{(k^2_1-k^2_2)(\lambda+2\mu)}\int_\Gamma \left[k_1^2\gamma_{k_1}(x,y)-k_2^2\gamma_{k_2}(x,y)\right]\nu_x^\top M(\partial_y,\nu_y) u(y)ds_y\nonumber\\
&&\quad+\frac{2i\mu\omega\gamma}{(k^2_1-k^2_2)(\lambda+2\mu)}M(\partial_x,\nu_x):\int_\Gamma M(\partial_y,\nu_y) u(y)\nabla_x^\top\left[\gamma_{k_1}(x,y)-\gamma_{k_2}(x,y)\right] dsy.
\label{parTE12T}
\en
By a combination of (\ref{TE11T}) and (\ref{TyE12T}), we can obtain that
\ben
&&\quad N_3(u)(x)\\
&&=-\rho_f\omega^2 \int_\Gamma \left[\partial_{\nu_x}\left(\gamma _{k_s}(x,y) - \gamma _{k_1}(x,y)\right)\nu_y^\top+\partial_{\nu _y}\gamma_{k_s}(x,y)\nu_x^\top \right]u(y)ds_y\\
&&\quad+(\frac{\rho_f\omega^2(k_2^2-q)}{k_1^2-k_2^2}-\frac{\rho_f^2\omega^4\alpha(\alpha-\beta)}{\beta(\lambda+2\mu)(k_1^2-k_2^2)})\int_\Gamma \partial_{\nu_x}\left[\gamma _{k_1}(x,y) - \gamma _{k_2}(x,y)\right]\nu_y^\top u(y)ds_y\\
&&\quad+\frac{i\omega\gamma}{k_1^2-k_2^2}\int_\Gamma \partial_{\nu_x}\left[k_1^2\gamma _{k_1}(x,y) - k_2^2\gamma _{k_2}(x,y)\right]\nu_y^\top u(y)ds_y\\
&&\quad+\frac{\rho_f\omega^2\alpha}{\beta(k_1^2-k_2^2)}\int_\Gamma \partial_{\nu_x}\left[(k_p^2-k_1^2)\gamma _{k_1}(x,y) - (k_p^2-k_2^2)\gamma _{k_2}(x,y)\right]\nu_y^\top u(y)ds_y\\
&&\quad-\rho_f\omega^2\int_\Gamma  \nu_x^\top\left[2\mu E_{11}(x,y) - \gamma _{k_s}(x,y)I\right]u(y) ds_y\\
&&\quad- \frac{2i\mu \omega \gamma }{(\lambda  + 2\mu )(k_1^2 - k_2^2)}\int_\Gamma  \left[k_1^2\gamma _{k_1}(x,y) - k_2^2\gamma _{k_2}(x,y)\right]\nu _x^\top u(y) d{s_y}\\
&&\quad+\frac{2i\mu\omega\gamma}{(k^2_1-k^2_2)(\lambda+2\mu)}M(\partial_x,\nu_x):\int_\Gamma M(\partial_y,\nu_y) u(y)\nabla_x^\top\left[\gamma_{k_1}(x,y)-\gamma_{k_2}(x,y)\right] dsy.
\enn

Using (\ref{pyE21}), we have
\ben
&&\quad-\rho_f\omega^2\int_\Gamma\nu_x^\top\partial_{\nu_y}E_{21}(x,y)p(y)ds_y\\
&&=\frac{\rho_f\omega^2(\alpha-\beta)}{(\lambda+2\mu)(k_1^2-k_2^2)}\int_\Gamma\partial_{\nu_x}\partial_{\nu_y}\left(\gamma_{k_1}(x,y)-\gamma_{k_2}(x,y)\right)p(y)ds_y\\
&&=\frac{\rho_f\omega^2(\alpha-\beta)}{(\lambda+2\mu)(k_1^2-k_2^2)}\int_\Gamma\left[k_1^2\gamma_{k_1}(x,y)-k_2^2\gamma_{k_2}(x,y)\right]\nu_x^\top\nu_yp(y)ds_y\\
&&\quad+\frac{\rho_f\omega^2(\alpha-\beta)}{(\lambda+2\mu)(k_1^2-k_2^2)}\int_\Gamma\left(\nu_x\times\nabla_x\left[\gamma_{k_1}(x,y)-\gamma_{k_2}(x,y)\right]\right)\cdot\left( \nu_y\times\nabla_yp(y)\right)ds_y\\
&&=\frac{\rho_f\omega^2(\alpha-\beta)}{(\lambda+2\mu)(k_1^2-k_2^2)}\int_\Gamma\left[k_1^2\gamma_{k_1}(x,y)-k_2^2\gamma_{k_2}(x,y)\right]\nu_x^\top\nu_yp(y)ds_y\\
&&\quad+\frac{\rho_f\omega^2(\alpha-\beta)}{(\lambda+2\mu)(k_1^2-k_2^2)}\int_\Gamma\tau_2\left[\gamma_{k_1}(x,y)-\gamma_{k_2}(x,y)\right]\tau_1p(y)ds_y.
\enn
Due to (\ref{parE12}), we can obtain that
\ben
&&\quad-\beta\int_\Gamma\partial_{\nu_x}E_{12}^\top\nu_yp(y)ds_y\\
&&=\frac{i\omega\beta\gamma}{(\lambda+2\mu)(k_1^2-k_2^2)}\int_\Gamma\partial_{\nu_x}\partial_{\nu_y}\left(\gamma_{k_1}(x,y)-\gamma_{k_2}(x,y)\right)p(y)ds_y\\
&&=\frac{i\omega\beta\gamma}{(\lambda+2\mu)(k_1^2-k_2^2)}\int_\Gamma\left[k_1^2\gamma_{k_1}(x,y)-k_2^2\gamma_{k_2}(x,y)\right]\nu_x^\top\nu_yp(y)ds_y\\
&&\quad+\frac{i\omega\beta\gamma}{(\lambda+2\mu)(k_1^2-k_2^2)}\int_\Gamma\left(\nu_x\times\nabla_x\left[\gamma_{k_1}(x,y)-\gamma_{k_2}(x,y)\right]\right)\cdot\left( \nu_y\times\nabla_yp(y)\right)ds_y\\
&&=\frac{i\omega\beta\gamma}{(\lambda+2\mu)(k_1^2-k_2^2)}\int_\Gamma\left[k_1^2\gamma_{k_1}(x,y)-k_2^2\gamma_{k_2}(x,y)\right]\nu_x^\top\nu_yp(y)ds_y\\
&&\quad+\frac{i\omega\beta\gamma}{(\lambda+2\mu)(k_1^2-k_2^2)}\int_\Gamma\tau_2\left[\gamma_{k_1}(x,y)-\gamma_{k_2}(x,y)\right]\tau_1p(y)ds_y.
\enn
Following the result (\ref{ppgks}), we have
\ben
&&\quad\int_\Gamma \partial_{\nu _x} \partial_{\nu _y}E_{22}(x,y)p(y)ds_y\\
&&=-\frac{1}{k_1^2-k_2^2}\int_\Gamma \partial_{\nu _x} \partial_{\nu _y}\left[(k^2_p-k^2_1)\gamma_{k_1}(x,y)-(k^2_p-k^2_2)\gamma_{k_2}(x,y)\right]p(y)ds_y\\
&&=-\frac{1}{k^2_1-k^2_2}\int_\Gamma  \left[(k^2_p-k^2_1)k^2_1\gamma_{k_1}(x,y)-(k^2_p-k^2_2)k^2_2\gamma_{k_2}(x,y)\right]\nu_x^\top\nu_y p(y)ds_y\\
&&\quad-\frac{1}{k^2_1-k^2_2}\int_\Gamma\left(\nu_x\times\nabla_x\left[(k_p^2-k_1^2)\gamma_{k_1}(x,y)-(k_p^2-k_2^2)\gamma_{k_2}(x,y)\right]\right)\cdot\left( \nu_y\times\nabla_yp(y)\right)ds_y\\
&&=-\frac{1}{k^2_1-k^2_2}\int_\Gamma  \left[(k^2_p-k^2_1)k^2_1\gamma_{k_1}(x,y)-(k^2_p-k^2_2)k^2_2\gamma_{k_2}(x,y)\right]\nu_x^\top\nu_y p(y)ds_y\\
&&\quad-\frac{1}{k^2_1-k^2_2}\int_\Gamma \tau_2\left[(k^2_p-k^2_1)\gamma_{k_1}(x,y)-(k^2_p-k^2_2)\gamma_{k_2}(x,y)\right]\tau_1p(y)ds_y.
\enn
Hence,
\ben
&\quad&N_4(p)(x)\\
&=&\rho_f\omega^2\beta\int_\Gamma  \nu _x^\top E_{11}(x,y)\nu_yp(y) ds_y\\
&\quad&+\frac{i\omega\gamma\beta+\rho_f\omega^2(\alpha-\beta)}{(\lambda+2\mu)(k^2_1-k^2_2)}\int_\Gamma  \left[k^2_1\gamma _{k_1}(x,y) - k^2_2\gamma _{k_2}(x,y)\right] \nu_x^\top\nu_y p(y)ds_y\\&\quad&
-\frac{1}{k^2_1-k^2_2}\int_\Gamma  \left[(k^2_p-k^2_1)k^2_1\gamma_{k_1}(x,y)-(k^2_p-k^2_2)k^2_2\gamma_{k_2}(x,y)\right]\nu_x^\top\nu_y p(y)ds_y\\
&\quad&+\frac{\rho_f\omega^2(\alpha-\beta)+i\omega\beta\gamma}{(\lambda+2\mu)(k_1^2-k_2^2)}\int_\Gamma\tau_2\left[\gamma_{k_1}(x,y)-\gamma_{k_2}(x,y)\right]\tau_1p(y)ds_y \\
&\quad&-\frac{1}{k^2_1-k^2_2}\int_\Gamma \tau_2\left[(k^2_p-k^2_1)\gamma_{k_1}(x,y)-(k^2_p-k^2_2)\gamma_{k_2}(x,y)\right]\tau_1p(y)ds_y.
\enn
\end{proof}

\end{document}